\theoremstyle{plain}
\newtheorem*{theorem*}{Theorem}
\newtheorem{theorem}{Theorem}[section]
\newtheorem{proposition}[theorem]{Proposition}
\newtheorem{lemma}[theorem]{Lemma}
\newtheorem{corollary}[theorem]{Corollary}
\newtheorem{notation}[theorem]{Notation}
\newtheorem{construction}[theorem]{Construction}
\theoremstyle{definition}
\newtheorem{definition}[theorem]{Definition}
\newtheorem{example}[theorem]{Example}
\newtheorem{remark}[theorem]{Remark}
\newcommand{\sheaf}[1]{\mathscr{#1}}
\newcommand{\OO}{\sheaf{O}}
\newcommand{\EE}{\sheaf{E}}
\newcommand{\HH}{\sheaf{H}}
\newcommand{\KK}{\sheaf{K}}
\newcommand{\isom}{\cong}
\newcommand{\Z}{\mathbb Z}
\newcommand{\C}{\mathbb C}
\renewcommand{\P}{\mathbb P}
\newcommand{\N}{\mathbb{N}}
\newcommand{\F}{\mathbb{F}}
\newcommand{\G}{\mathbb{G}}
\newcommand{\Gm}{\G_m}
\DeclareMathOperator{\Br}{\mathrm{Br}}
\DeclareMathOperator{\Pic}{\mathrm{Pic}}
\DeclareMathOperator{\Spec}{\mathrm{Spec}}
\DeclareMathOperator{\CH}{\mathrm{CH}}
\DeclareMathOperator{\coker}{\mathrm{coker}}
\newcommand{\ur}{\mathrm{nr}}
\newcommand{\Hur}{H_{\ur}}
\newcommand{\et}{\mathrm{\acute{e}t}}
\newcommand{\Het}{H_{\et}}
\newcommand{\mapto}[1]{\xrightarrow{#1}}
\newcommand{\id}{\mathrm{id}}
\newcommand{\barD}{\overline{D}}
\newcommand{\blowup}{\widetilde{\P}^2}
\newcommand{\tildeC}{\widetilde{C}}
\newcommand{\barC}{\overline{C}}
\newcommand{\tildeLi}{\widetilde{L_i}}
\newcommand{\Lcontact}{L_{c}}
\newcommand{\barB}{\overline{B}}
\begin{document}

\title[Conic bundles with nontrivial unramified Brauer group]{Conic bundles with nontrivial unramified\\ Brauer group over threefolds}

\author[Auel]{Asher Auel}
\address{Asher Auel, Department of Mathematics\\
Yale University\\
10 Hillhouse Avenue\\
New Haven, CT 06511, USA}
\email{asher.auel@yale.edu}

\author[B\"ohning]{Christian B\"ohning}
\address{Christian B\"ohning, Mathematics Institute, University of Warwick\\
Coventry CV4 7AL, England}
\email{C.Boehning@warwick.ac.uk}

\author[Bothmer]{Hans-Christian Graf von Bothmer}
\address{Hans-Christian Graf von Bothmer, Fachbereich Mathematik der Universit\"at Hamburg\\
Bundesstra\ss e 55\\
20146 Hamburg, Germany}
\email{hans.christian.v.bothmer@uni-hamburg.de}

\author[Pirutka]{Alena Pirutka}
\address{Alena Pirutka, Courant Institute of Mathematical Sciences,
New York University\\ 
251 Mercer Street\\ 
New York, NY 10012, USA}
\email{pirutka@cims.nyu.edu}

\begin{abstract}
We derive a formula for the unramified Brauer group of a general class
of rationally connected fourfolds birational to conic bundles over
smooth threefolds.  We produce new examples of conic bundles over
$\P^3$ where this formula applies and which have nontrivial unramified
Brauer group.  The construction uses the theory of contact surfaces
and, at least implicitly, matrix factorizations and symmetric
arithmetic Cohen--Macaulay sheaves, as well as the geometry of special
arrangements of rational curves in $\P^2$.  We also prove the
existence of universally $\CH_0$-trivial resolutions for the general
class of conic bundle fourfolds we consider.  Using the degeneration
method, we thus produce new families of rationally connected fourfolds
whose very general member is not stably rational.
\end{abstract}

\maketitle


\section{Introduction}\label{sIntroduction}


One of the fundamental problems in the birational classification of
algebraic varieties is to distinguish between varieties that are in
some sense close to $\P^n$---e.g., stably rational, unirational, or
rationally connected---and varieties in the birational equivalence
class of $\P^n$ itself.  Conic bundles over rational varieties are a
natural class to study in this respect, and the literature on them is
prodigious.  For example, conic bundles over rational surfaces were
used in \cite{AM72} to produce varieties that are unirational but not
stably rational (hence a fortiori not rational), and in
\cite{B-CT-S-SwD} to produce stably rational, but non-rational
varieties.  In \cite{CT-O}, the unramified cohomology groups were
introduced to give a more systematic treatment of, and greatly
generalize, the examples in \cite{AM72}. There is also a whole body of
work on conic bundles that are birationally rigid, taking its
departure from the groundbreaking works \cite{Sa80}, \cite{Sa82},
\cite{Is87}; see \cite{Pukh13} for a survey.

Conic bundles are important from a deformation-theoretic perspective
as well, as they usually come in families, making them amenable to the
degeneration method introduced and developed in the seminal articles
\cite{Voi15} and \cite{CT-P16}.  The method relies on the ability to
obstruct the universal triviality of the Chow group of $0$-cycles on a
mildly singular central fiber of such a family.  Then the very general
fiber of the family will be similarly obstructed, and in particular,
will not be stably rational.  The degeneration method has broadened
the range of applicability of previously known obstructions such as
unramified invariants and differential forms in positive
characteristic, and notably, has very recently led to examples of
families of smooth fourfolds with rational and non-rational fibers
\cite{HPT16}.

The present article started from a close analysis of the example in
\cite{HPT16} of a quadric surface fibration over $\P^2$ with
nontrivial unramified Brauer group, defined as divisor of bi-degree
$(2,2)$ in $\P^2\times \P^3$.  While the projection to $\P^2$ gives
the quadric surface fibration structure over $\P^2$, the other
projection gives a conic bundle over $\P^3$. The structural features
of this conic bundle helped us find the statements of the general
results of Section~\ref{sBrauerConic} about the unramified Brauer
group and of Section~\ref{sSingularities} about the singularities of
conic bundles over threefolds.  We also provide new constructions, in
Sections~\ref{sHPT}, \ref{sCatanese}, and~\ref{sIntersectionCurve}, of
conic bundles where these results apply.  One application is the
following (see Theorem~\ref{tMainApplication}).
\begin{theorem*}
A very general conic bundle $Y \to \P^3$ over $\C$, defined by a
homogeneous $3 \times 3$ matrix with entries of degrees
\[
    \begin{pmatrix}
        7 & 4 & 4 \\
        4 & 1 & 1 \\
        4 & 1 & 1 
     \end{pmatrix}
\]
is not stably rational.
\end{theorem*}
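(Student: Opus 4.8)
The plan is to realize the matrix shape $(7,4,4;1,1;1)$ as an irreducible family of conic bundles over $\P^3$ to which the results of Sections~\ref{sBrauerConic} and~\ref{sSingularities} apply, to single out inside this family one explicit degeneration with nontrivial unramified Brauer group, and then to invoke the degeneration method of \cite{Voi15} and \cite{CT-P16}.

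\emph{The family.} With $\EE=\OO(-3)\oplus\OO\oplus\OO$ and $\LL=\OO(1)$ on $\P^3$, a symmetric $3\times3$ matrix $M=(m_{ij})$ with the prescribed entry degrees is exactly a section of $\mathrm{Sym}^2\EE\dual\otimes\LL$, and its vanishing cuts out a conic bundle $Y=Y_M\subset\P(\EE)$ with discriminant $\barD=\{\det M=0\}$ a surface of degree $9$. These $M$ form a single projective space $B$, so ``very general'' has its usual meaning. The very general $Y$ is smooth: its singular locus maps into $\mathrm{Sing}(\barD)$, which for general $M$ is just the (finite) locus where $M$ has rank $\le1$, and over such a point, restricting the three first-order variations of the conic to the double line in the $\P^2$-fibre gives three binary quadratics with no common zero; being moreover a conic bundle over $\P^3$, $Y$ is rationally connected. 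In any case Section~\ref{sSingularities} supplies a universally $\CH_0$-trivial resolution of \emph{every} member of $B$, which is what will actually be used.

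\emph{The special member.} The block shape of the degree matrix is essential. Writing $C=\begin{pmatrix} m_{22} & m_{23} \\ m_{23} & m_{33}\end{pmatrix}$ for the lower right $2\times2$ block of linear forms and $u=(m_{12},m_{13})^{T}$, one has $\det M=m_{11}\det C-u^{T}\mathrm{adj}(C)\,u$, so on the quadric $Q=\{\det C=0\}$ the discriminant restricts to $\det M|_Q=-u^{T}\mathrm{adj}(C)u|_Q$, which is a perfect square up to a unit since $\mathrm{adj}(C)$ has rank $1$ along $Q$; thus $Q$ is a contact surface of $\barD$ and $\barD\cap Q$ is a non-reduced curve. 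Following the construction of Section~\ref{sIntersectionCurve}---which rests on the theory of contact surfaces, matrix factorizations, and symmetric arithmetic Cohen--Macaulay sheaves---I would specialize to a matrix $M_0$, still with entries of exactly the degrees $(7,4,4;1,1;1)$, for which the contact data of $\barD_0=\{\det M_0=0\}$ along $Q$---equivalently, the branch data of the conic bundle $Y_0=Y_{M_0}$---degenerates to the special arrangement of rational curves in $\P^2$ produced there. That arrangement is engineered so that, on the one hand, the hypotheses of the unramified Brauer group formula of Section~\ref{sBrauerConic} are met and the double cover of $\barD_0$ attached to $Y_0$ is nontrivial, and, on the other hand, the singularities of $Y_0$ and of $\barD_0$ stay of the mild type handled by Section~\ref{sSingularities}. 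The main obstacle, I expect, is to pull this off within the tight degree budget $(7,4,4;1,1;1)$: the two demands work against each other, since a sufficiently singular discriminant is needed for a nonzero Brauer class while only a mildly singular one keeps the resolution universally $\CH_0$-trivial, and the degrees $(7,4,4;1,1;1)$ should turn out to be the minimal ones for which the arrangement of Section~\ref{sIntersectionCurve} fits into the discriminant of a matrix of this block shape.

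\emph{The degeneration method.} It remains to put the pieces together. By Section~\ref{sSingularities}, $Y_0$ has a universally $\CH_0$-trivial resolution $f\colon\wt{Y_0}\to Y_0$, and by the formula of Section~\ref{sBrauerConic} applied to $M_0$,
\[
\Br_{\ur}\bigl(\C(\wt{Y_0})/\C\bigr)=\Br_{\ur}\bigl(\C(Y_0)/\C\bigr)\neq 0,
\]
the first equality being the birational invariance of the unramified Brauer group for smooth proper varieties. Hence the smooth projective fourfold $\wt{Y_0}$ is not universally $\CH_0$-trivial. Now consider the flat projective family over $B$ whose fibre over $[M]\in B$ is $Y_M$: its very general fibre is smooth, while the special fibre $Y_0$ (over $[M_0]$) carries the universally $\CH_0$-trivial resolution $\wt{Y_0}$, which is not universally $\CH_0$-trivial. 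The specialization theorem of Voisin and Colliot-Th\'{e}l\`{e}ne--Pirutka (\cite{Voi15}, \cite{CT-P16}) then shows that the very general fibre $Y$ is not universally $\CH_0$-trivial. Since a stably rational---indeed, a retract rational---smooth projective variety is universally $\CH_0$-trivial, the very general conic bundle $Y\to\P^3$ defined by a $3\times3$ matrix of degrees $(7,4,4;1,1;1)$ is not stably rational.
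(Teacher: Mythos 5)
Your overall strategy coincides with the paper's: combine the unramified Brauer group formula of Section~\ref{sBrauerConic} (via Corollary~\ref{cAuelPirutkaMoreGeometric}), the universally $\CH_0$-trivial resolution of Section~\ref{sSingularities}, and the degeneration method of \cite{Voi15}, \cite{CT-P16}. But the decisive step --- the actual existence of a conic bundle of graded-free type $(7,1,1)$ whose discriminant splits as (Cayley cubic)$\,\cup\,$(sextic), with the splitting of the double covers over the components of the intersection curve and with only the mild singularities allowed by Theorem~\ref{tCH0trivial} --- is not established in your write-up: you say you \emph{would} specialize to a matrix $M_0$ realizing the rational-curve arrangement of Section~\ref{sIntersectionCurve}, and you yourself flag the degree budget as the main obstacle. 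That existence is exactly the content of Proposition~\ref{pConicBundle}, and it is proved there only over the finite field $\F_p$ with $p=10007$, by Construction~\ref{c666} together with a computer verification of the open conditions; neither the paper nor your argument produces such an $M_0$ over $\C$ or any field of characteristic zero.

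This matters for your last step. You run the degeneration inside the complex parameter space $B$, with a complex special fibre $Y_0$ --- which you do not have. The paper instead passes from the very general complex member to the characteristic-$p$ example via the specialization principle in \emph{unequal} characteristic, \cite[Thm.~1.12]{CT-P16} as employed in the proof of \cite[Thm.~1.20]{CT-P16}, combined with Theorem~\ref{tCH0trivial} and Remark~\ref{rAdd} to get a universally $\CH_0$-trivial resolution of that special fibre. To close the gap you must either carry out the construction of Section~\ref{sIntersectionCurve} in characteristic zero and verify all the open conditions there, or replace your equal-characteristic degeneration by the mixed-characteristic one. Two smaller points: the claim that Section~\ref{sSingularities} resolves \emph{every} member of $B$ is too strong, since Theorem~\ref{tCH0trivial} requires the reducibility, smoothness-along-$D$, nodal and rank hypotheses listed there; this is harmless provided the very general member is smooth, which does hold (the relevant linear system on $\P(\EE)$ is base point free, so Bertini applies, giving a cleaner argument than your first-order variation sketch).
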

Let us describe the contents of the individual Sections in more detail.

\medskip

In Section~\ref{sBrauerConic}, we provide a formula for the unramified
Brauer groups of the total spaces of certain conic bundles over smooth
projective threefolds $B$ with $\Br(B)[2]=0$ and $\Het^3(B, \Z /2)=0$
over an algebraically closed field $k$ of characteristic not $2$.  The
formula (given in Theorem~\ref{tAuelPirutkaAlgebraicVersion}) depends
on the geometry and combinatorics of the components of the
discriminant divisor and their mutual intersections, as well as the
structure of their double covers induced by the lines in the fibers of
the conic bundle.  If the discriminant is irreducible, the unramified
Brauer group of the conic bundle is trivial.  The formula can be
viewed as a higher dimensional analogue of a formula due to
Colliot-Th\'el\`ene (see \cite[Thm.~3.13]{Pi16}) for conic bundles
over surfaces, see also \cite{zagorski}.  Such formulas are naturally
stated in the language of Galois cohomology, algebraic $K$-theory, and
Bloch--Ogus theory, but we go on to reinterpret ours in a geometric
way in Corollary~\ref{cAuelPirutkaMoreGeometric}. This is fundamental
for finding, in Sections \ref{sCatanese} and \ref{sIntersectionCurve},
the geometric examples of conic bundles where the formula applies.

In Section~\ref{sHPT}, we introduce a method to produce fourfold conic
bundles with reducible discriminants via taking double covers branched
in surfaces that are contact to discriminants of simpler conic
bundles. We analyze the example in \cite{HPT16}, of a divisor of
bidegree $(2,2)$ in $\P^2\times \P^3$, as a conic bundle over $\P^3$
from this perspective, yielding an independent proof that this variety
has nontrivial unramified Brauer group.

In Section~\ref{sCatanese}, we introduce another method to construct
fourfold conic bundles over $\P^3$ with reducible discriminants. It is
again based on the theory of contact of surfaces developed largely in
the fundamental paper \cite{Cat81}, as well as on the theory of matrix
factorizations as in \cite{Ei80} and the theory of symmetric
determinantal representations of hypersurfaces \cite{Cat81},
\cite{Beau00}, \cite[Chapter 4]{Dol12}.  While the latter two
theoretical tools are not used logically in our proof, they were very
important in finding the result.

In Section~\ref{sIntersectionCurve}, we complete the construction of
new examples of fourfold conic bundles over $\P^3$ with nontrivial
unramified Brauer group. These are, hence, not stably rational. They
are part of natural families of conic bundles of specific graded-free
types over $\P^3$.

Finally, in Section~\ref{sSingularities}, we analyze the singularities
of the total spaces of a quite general class of conic bundle
fourfolds, proving that they admit universally $\CH_0$-trivial
resolutions. This is aided by a classification of local analytic
normal forms for the singularities that can appear.  The degeneration
method of \cite{Voi15} and \cite{CT-P16} can then be applied to yield
an obstruction to stable rationality of the very general member of
families in which our new examples appear.  In particular, this
provides a simpler proof that the example considered in \cite{HPT16}
admits a universally $\CH_0$-trivial resolution.

As a final note, it may be interesting to remark that we were only
able to construct the examples in Sections~\ref{sCatanese} and
\ref{sIntersectionCurve} by translating virtually every algebraic
concept entering in Theorem~\ref{tAuelPirutkaAlgebraicVersion} into
geometry.  In this respect, hypersurfaces with symmetric rank $1$
arithmetic Cohen--Macaulay sheaves are better than determinants,
contact of surfaces is a more versatile concept than reducibility of
polynomials, and special configurations of rational curves are more
concrete than the analysis of functions becoming squares when
restricted to a curve. On the other hand, the arithmetic
function-field and Galois cohomological point of view is far superior
if one wants to prove an abstract general result such as
Theorem~\ref{tAuelPirutkaAlgebraicVersion}.  The main difficulty is
then constructing examples.  One reason why it is so much more
difficult to find conic bundles over threefolds with prescribed
discriminant, as opposed to over surfaces, is that the theory of
maximal orders in quaternion algebras over threefolds is more
complicated.  Instead of relying on the theory of maximal orders, 
which was utilized in \cite{AM72}, we rely on geometry to construct
our examples.

\medskip

\textbf{Conventions.} The letter $k$ will usually denote an
algebraically closed ground field of characteristic not $2$, unless
explicitly stated otherwise. As usual, the term variety over $k$ means
a separated, integral scheme of finite type over $k$. A conic bundle
is a flat projective surjective morphism of varieties with (geometric)
fibers isomorphic to plane conics and general fiber smooth.

\medskip

\textbf{Acknowledgments.} We would like to thank Fabrizio Catanese,
Jean-Louis Colliot-Th\'el\`ene, and Alexander Kuznetsov for useful
discussions and suggestions during the course of this project.  We
would also like to thank the organizers of the following meetings
where (various subgroups of) the authors started this work: the Simons
Symposium ``Geometry over non-closed fields'' held April 18--22, 2016
in Schlo{\ss} Elmau, Bavaria; the Edge Days workshop ``Birational
geometry and reduction to positive characteristic'' held June 3--5 at
the University of Edinburgh, Scotland; and the CIMI conference ``New
methods in birational geometry'' held June 27--July 1 at the
University of Toulouse, France.  The first author was partially
supported by the NSA grant H98230-16-1-032.

\pagebreak


\section{Brauer group of conic bundles over threefolds} \label{sBrauerConic}


We first recall a few facts from Galois cohomology.

Let $L$ be the function field of an integral variety $Z$ defined over
$k$. At this point we do not even have to assume that $k$ is
algebraically closed, but $k$ should have characteristic different
from two. The first Galois cohomology group $H^1 (L, \Z /2) := H^1
(\mathrm{Gal}(L), \Z /2)$, with constant coefficients $\Z /2$, can be
identified via Kummer theory with the group of square classes
\begin{gather}\label{fGaloisH1}
H^1 (L, \Z /2) \simeq L^{\times} / L^{\times 2}.
\end{gather}
The second Galois cohomology group $H^2 (L, \Z/ 2)$ can be identified
with the $2$-torsion subgroup of the Brauer group of $L$
\begin{gather}\label{fGaloisH2}
H^2 (L, \Z /2) \simeq \Br(L)[2].
\end{gather}
For $a,b \in L^{\times}$, we denote by the symbol $(a,b) \in
\Br(L)[2]$ the Brauer class of the quaternion algebra generated by $x,
y$ with relations $x^2=a$, $y^2=b$, and $xy=-yx$.  This is the same as
the Brauer class associated to the plane conic over $L$ defined by $a
x^2 +b y^2 = z^2$.  It also coincides with the cup product of the
square classes of $a$ and $b$ via the identification
\eqref{fGaloisH1}.

Now suppose $D$ is a prime divisor of $Z$ such that $Z$ is regular in
the generic point of $D$; thus $D$ corresponds to a unique discrete
divisorial valuation $v_D$ of $L$ with residue field $k(D)$. We can
then define two residue maps (homomorphisms) relevant to us in the
sequel
\begin{align}
\label{fResidueMaps}
\begin{split}
\partial^1_D & \colon H^1 (L, \Z /2)  \to H^0 (k(D), \Z /2) = \Z /2 \\ 
\partial^2_D & \colon H^2 (L, \Z /2) \to H^1 (k(D), \Z /2)
\end{split}
\end{align}
in the following manner: if a class in $H^1 (L, \Z/2)$ is represented
by an element $a\in L^{\times}$ according to \eqref{fGaloisH1}, then
$\partial^1_D (a) = v_D(a) \pmod 2$; if a class in $H^2 (L, \Z /2)$
is represented by a symbol $(a,b)$ according to \eqref{fGaloisH2}, then
\begin{gather}\label{fResH2}
\partial^2_{D} (a,b) = (-1)^{v_D(a)v_D(b)} \overline{
a^{v_D(b)}/b^{v_D(a)}} 
\end{gather} 
where $\overline{a^{v_D(b)}/b^{v_D(a)}} \in H^1 (k(D), \Z/2) =
k(D)^{\times}/k(D)^{\times 2}$ is the square class of the unit
$a^{v_D(b)}/b^{v_D(a)} \in L^{\times}$ in the residue field.  In fact
$\partial^2_{D}$ is uniquely determined by the formula
$\partial^2_{D}(\pi,u)=\overline{u}$ for any uniformizer $\pi$ and
unit $u$ in the valuation ring of $v_{D}$.  For $u \in L^\times$, we
sometimes write $u|_D := \overline{u}$ for the residue class.

One also defines the map $\partial^1_D$ in the more general case when
$Z$ is potentially singular at the generic point of $D$, so that the
local ring of $Z$ at the generic point of $D$ is not necessarily a
discrete valuation ring. In that case, we define $\partial^1_D$
following Kato~\cite[p.~151]{Ka86}. If $Z' \to Z$ is the normalization
and $D_1, \dotsc , D_{\mu}$ are the irreducible components lying over
$D$ corresponding to the discrete divisorial valuations of $L$ with
center $D$, then for $a\in L^{\times}$ we define
\begin{gather}\label{fRes1General}
\partial^1_D (a) = \sum_{i=1}^{\mu} [k(D_i):k(D)] v_{D_i} (a) \quad
\pmod 2.
\end{gather}

The unramified cohomology group $H^2_{\mathrm{nr}} (L/k, \Z/2)$, which
depends on the ground field $k$, is the subgroup of $H^2 (L, \Z /2)$
consisting of those elements that are annihilated by all residue maps
$\partial^2_v \colon H^2 (L, \Z/2) \to H^1 (\kappa (v), \Z/2)$ where
$v$ runs over the divisorial valuations of $L$ that are trivial on
$k$. Here $\kappa (v)$ is the residue field of $v$. Clearly, formula
(\ref{fResH2}) makes sense for any divisorial valuation $v$ of $L$, not
only those $v_D$ that have a divisorial center $D$ on $Z$.  The
nontriviality of the unramified cohomology group is an obstruction to
stable rationality of $L$ over $k$.

If $Z$ is smooth and proper over $k$, then there is a natural
isomorphism $\Br(Z)[2]\to H^2_{\mathrm{nr}} (L/k, \Z/2)$, where
$\Br(Z)=\Het^2 (Z, \G_m)$ is the cohomological Brauer group of $Z$,
cf.\ \cite[Prop.~4.2.3(a)]{CT95}.  In general, we refer to
$H^2_{\mathrm{nr}} (L/k, \Z/2)$ as the $2$-torsion in the unramified
Brauer group, and write it as $\mathrm{Br}_{\mathrm{nr}}(L/k)[2]$.

It is of course impossible to check \emph{all} divisorial valuations
in the definition of unramified cohomology just given, so in practice
one needs some complimentary result that narrows this set down to a
set of valuations corresponding to prime divisors on a \emph{fixed}
model of $L$. Such results are implied by so-called ``purity"
\cite{CT95} and we will use a variant of \cite[Thm.~3.82]{CT95}, see
also \cite{Pi16}, Prop. 3.2:

\begin{proposition}\label{pPurityResult}
Let $\OO$ be the local ring of a smooth (scheme-theoretic) point on a
variety over a field $k$ of characteristic not $2$, and let $L$ be the
field of fractions of $\OO$. Let $\gamma \in H^i (L, \Z/2)$ be some
class such that $\partial^i_v (\gamma )=0$ for all valuations
corresponding to height one prime ideals of $\OO$ (hence prime
divisors in $\mathrm{Spec}(\OO )$). Then $\gamma$ is in the image of
the natural map $\Het^i (\mathrm{Spec}( \OO ) , \Z/2) \to H^i (L, \Z
/2)$.
\end{proposition}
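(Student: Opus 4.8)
The plan is to deduce the statement from the Bloch--Ogus--Gabber theorem asserting that the Gersten (Cousin) complex of the Zariski sheaf associated to \'etale cohomology is a flasque resolution; this is precisely the circle of ideas behind \cite[Thm.~3.82]{CT95} and \cite[Prop.~3.2]{Pi16}. First I would set up the geometry: replace the variety by an affine open neighbourhood $U$ of the given smooth point $x$ which is smooth over $k$, so that $\OO = \OO_{U,x}$ and $L = k(U)$. Since $\mathrm{char}\, k \neq 2$, the sheaf $\mu_2$ is (non-canonically) isomorphic to $\Z/2$ and all Tate twists $\mu_2^{\otimes j}$ are trivial, so I may work throughout with constant $\Z/2$ coefficients. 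Let $\HH^i$ denote the Zariski sheaf on $U$ associated to the presheaf $V \mapsto \Het^i(V, \Z/2)$.

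The core input is then the Bloch--Ogus theorem for $U$ smooth over a field (in the generality needed one may equally invoke Gabber's refinement; see \cite{CT95}, §3.3, and the references there): the Gersten complex
\[
0 \to \HH^i \to (\iota_\eta)_* \, \Het^i(L, \Z/2) \to \bigoplus_{y \in U^{(1)}} (\iota_y)_* \, \Het^{i-1}(k(y), \Z/2) \to \cdots
\]
is an exact resolution of $\HH^i$ by flasque Zariski sheaves, where $\iota_y \colon \Spec k(y) \hookrightarrow U$, the sum runs over codimension-one points, and the differential out of the generic point is $\bigoplus_y \partial^i_y$. One must check that the residue maps occurring here agree with the valuation-theoretic residues $\partial^i_v$ of \eqref{fResidueMaps} and \eqref{fResH2}; this is standard, both being characterized by their effect on symbols $(\pi, u_1, \dots)$ with $\pi$ a uniformizer (cf.\ \cite{CT95}, §3.3).

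Next I would take sections over the local scheme $\Spec \OO$. Pulling the complex back along $\Spec \OO \to U$ (equivalently, passing to stalks at $x$ after taking the filtered limit over affine open neighbourhoods of $x$), and using that \'etale cohomology commutes with such cofiltered limits, one identifies the stalk $(\HH^i)_x$ with $\Het^i(\Spec \OO, \Z/2)$ and the codimension-one points of $\Spec \OO$ with the height-one primes $\mathfrak p$ of $\OO$. Since $\Spec \OO$ is local, global sections of a Zariski sheaf equal its stalk at the closed point, and since the resolution is flasque, taking sections preserves exactness. Extracting the first two maps gives the exact sequence
\[
0 \longrightarrow \Het^i(\Spec \OO, \Z/2) \longrightarrow H^i(L, \Z/2) \xrightarrow{\ (\partial^i_v)_v\ } \bigoplus_{\substack{\mathfrak p \subset \OO \\ \mathrm{ht}\,\mathfrak p = 1}} \Het^{i-1}(k(\mathfrak p), \Z/2).
\]
A class $\gamma$ with $\partial^i_v(\gamma) = 0$ for all valuations coming from height-one primes lies in the kernel of the right-hand map, hence in the image of $\Het^i(\Spec \OO, \Z/2) \to H^i(L, \Z/2)$, which is the assertion.

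The real content, and the step I would not reprove, is the Bloch--Ogus--Gabber statement that the Gersten complex is a resolution: this rests on an effaceability/injectivity property for the cohomology of local rings of smooth varieties together with a geometric presentation lemma (Quillen's trick, resp.\ Gabber's presentation). For this paper only $i \le 3$ occurs, and only exactness at the $H^i(L,\Z/2)$ spot is needed. For $i = 1$ this much is in fact elementary: by \eqref{fGaloisH1} one has $H^1(L,\Z/2) = L^\times/L^{\times 2}$ and $\partial^1_v(a) = v(a) \bmod 2$, and since a regular local ring is a UFD, a function whose divisor on $\Spec \OO$ is everywhere of even order differs from a unit by a square. For $i = 2$, the case governing the Brauer group, one genuinely needs the machinery above. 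A secondary, purely bookkeeping point is the compatibility of residue normalizations flagged in the previous paragraph, which must be verified so that the hypothesis ``$\partial^i_v(\gamma) = 0$'' is read consistently with \eqref{fResH2}.
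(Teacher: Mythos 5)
Your argument is correct and is essentially the one the paper relies on: the paper does not reprove this proposition but cites \cite[Thm.~3.82]{CT95} and \cite[Prop.~3.2]{Pi16}, whose content is exactly the Bloch--Ogus--Gabber statement that the Gersten complex of $\HH^i$ is a flasque resolution, from which exactness at the $H^i(L,\Z/2)$ spot over the local scheme $\Spec\OO$ gives the claim as you describe. Your remarks on the compatibility of residue normalizations and the elementary case $i=1$ are accurate but not needed beyond what the cited references already cover.
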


The following Corollary is a little more geometric, cf. \cite[Prop.~2.1.8(d)]{CT95}.

\begin{corollary}\label{cPurityResult}
Suppose $Z_\mathrm{sm}$ is a smooth variety over a field $k$ of
characteristic not $2$, and let $L$ be the function field of
$Z_\mathrm{sm}$. Then every element in $H^i (L/k, \Z /2)$ that is
unramified with respect to divisorial valuations corresponding to
prime divisors on $Z_\mathrm{sm}$ is also unramified with respect to
all divisorial valuations that have centers on $Z_\mathrm{sm}$.
\end{corollary}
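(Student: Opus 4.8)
The plan is to reduce the statement to Proposition \ref{pPurityResult} by a local argument at the center of each valuation. Let $v$ be a divisorial valuation of $L$ that is trivial on $k$ and has a center $z$ on $Z_{\mathrm{sm}}$, and let $\gamma \in H^i(L, \Z/2)$ be a class that is unramified with respect to all divisorial valuations corresponding to prime divisors on $Z_{\mathrm{sm}}$. We must show $\partial^i_v(\gamma) = 0$. First I would pass to the local ring $\OO := \OO_{Z_{\mathrm{sm}}, z}$, which is regular since $Z_{\mathrm{sm}}$ is smooth over $k$, and whose fraction field is $L$. The height-one primes of $\OO$ correspond exactly to the prime divisors of $Z_{\mathrm{sm}}$ passing through $z$, and by hypothesis $\gamma$ is unramified at each of the associated valuations (these are among the valuations coming from prime divisors on $Z_{\mathrm{sm}}$). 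Hence Proposition \ref{pPurityResult} applies and gives that $\gamma$ lies in the image of $\Het^i(\Spec(\OO), \Z/2) \to H^i(L, \Z/2)$; that is, $\gamma$ extends to a class $\wt{\gamma} \in \Het^i(\Spec(\OO), \Z/2)$.

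The second step is to observe that a class coming from $\Het^i(\Spec(\OO), \Z/2)$ is automatically unramified with respect to \emph{every} divisorial valuation centered on $\Spec(\OO)$. Indeed, if $v$ has center $z$ (so that the center lies in $\Spec(\OO)$), then after possibly blowing up or passing to a suitable birational model dominating $\Spec(\OO)$, the valuation $v$ corresponds to a prime divisor $E$ on some regular scheme $W$ equipped with a morphism $W \to \Spec(\OO)$. The class $\wt{\gamma}$ pulls back to a class on $\Spec(\OO_{W, \eta_E})$ where $\eta_E$ is the generic point of $E$; since $\OO_{W, \eta_E}$ is a discrete valuation ring with fraction field $L$ and $\wt{\gamma}$ is defined over its spectrum (not just the punctured spectrum), the residue $\partial^i_v(\wt{\gamma}|_L) = \partial^i_v(\gamma)$ vanishes, as the residue map kills classes that extend across the closed point. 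This is precisely the content of \cite[Prop.~2.1.8(d)]{CT95}, which I would cite for the clean statement that unramifiedness is a ``local'' property: a class in $\Het^i$ of a regular local ring has trivial residue along any divisorial valuation whose center is the closed point (or any point) of that ring.

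The main obstacle is making the second step rigorous without circularity: one needs to know that \emph{every} divisorial valuation $v$ of $L$ with center on $Z_{\mathrm{sm}}$ actually has a center on \emph{some} proper regular model over $\Spec(\OO)$ on which it is realized by an honest prime divisor, so that the étale-cohomological residue computation makes sense. This is where resolution of singularities (or de Jong's alterations, or more precisely the fact that any divisorial valuation is obtained by a sequence of blowups of regular centers, which is available in this setting) enters, and one must check that pulling back $\wt{\gamma}$ along such a model is compatible with the residue maps $\partial^i_v$ as defined via \eqref{fResH2}. Once that compatibility is in place, the vanishing $\partial^i_v(\gamma) = 0$ follows formally. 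Since $v$ was an arbitrary divisorial valuation with center on $Z_{\mathrm{sm}}$, this proves that $\gamma$ is unramified with respect to all such valuations.
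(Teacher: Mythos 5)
Your overall strategy is the intended one (the paper gives no written proof of this corollary, deferring to \cite[Prop.~2.1.8(d)]{CT95}): localize at the center $z$ of the valuation $v$ on $Z_{\mathrm{sm}}$, observe that the height-one primes of $\OO = \OO_{Z_{\mathrm{sm}},z}$ give valuations among those assumed unramified, and apply Proposition~\ref{pPurityResult} to extend $\gamma$ to a class $\wt{\gamma} \in \Het^i(\Spec(\OO), \Z/2)$. That first step is correct and complete.

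The second step, as you wrote it, has a genuine problem: you route the argument through a regular model $W \to \Spec(\OO)$ on which $v$ is realized by a prime divisor, and you say resolution of singularities (or realization of $v$ by blowups in regular centers) is needed to make this rigorous. Neither is available in general in this setting---$k$ is an arbitrary field of characteristic $\neq 2$, and the corollary is applied in the paper over $\ol{\F}_p$ to fourfolds, where resolution is not known---and neither is necessary. Since $v$ has center $z$ on $Z_{\mathrm{sm}}$, by the very definition of ``center'' the valuation ring $\OO_v \subset L$ dominates $\OO$, so there is a morphism $\Spec(\OO_v) \to \Spec(\OO)$, and $\wt{\gamma}$ restricts to a class in $\Het^i(\Spec(\OO_v),\Z/2)$ mapping to $\gamma$ in $H^i(L,\Z/2)$; the residue $\partial^i_v$ vanishes on the image of $\Het^i(\Spec(\OO_v),\Z/2) \to H^i(L,\Z/2)$, which is precisely the content of the cited \cite[Prop.~2.1.8]{CT95}. (Moreover, even if one insisted on a model, a divisorial valuation is by definition the valuation attached to a prime divisor on some normal variety with function field $L$, and the local ring at the generic point of that divisor is already a discrete valuation ring, so no regularity of the ambient model, hence no resolution, is ever required.) With this replacement your ``main obstacle'' disappears and the argument closes exactly along the lines the paper intends.
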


We will often apply the corollary above to the smooth locus
$Z_\mathrm{sm} := Z\setminus Z_{\mathrm{sing}}$ of a proper variety
$Z$ over $k$, where $Z_{\mathrm{sing}}$ is its singular locus.

\medskip

Let $K$ be an arbitrary field (possibly of characteristic 2) and let
$C$ be a smooth projective curve of genus zero over $K$.  The
anticanonical class on $C$ defines an embedding $C \to \P^2_K$ as a
smooth plane conic; we call $C$ a smooth conic over $K$.  As remarked
earlier, a smooth conic $C$ determines a Brauer class $\alpha \in
\Br(k)[2]$.  We say that $C$ is nonsplit if $C(K) = \emptyset$,
equivalently, $\alpha$ is nontrivial.  As before, we set $\Br(C) :=
\Het^2(C,\Gm)$.  Since $\Br(K) = H^2(K,\Gm) = \Het^2(\Spec K, \Gm)$
for any field $K$, we have a pullback map $\Br(K) \mapto{\iota}
\Br(C)$. We will need the following.

\begin{lemma}\label{coKerConic} 
Let $C$ be a smooth nonsplit conic over an arbitrary field $K$.  Then
the pullback map induces an exact sequence
\begin{gather}\label{sBrConic}
0\to \Z/2\to \Br(K) \mapto{\iota} \Br(C)\to 0
\end{gather}
where the kernel is generated by the Brauer class $\alpha \in \Br(K)[2]$ determined by $C$.

Assuming that $K$ has characteristic not 2 and that $-1$ is a square,
then \eqref{sBrConic} restricts to an exact sequence
\begin{gather}\label{sBrConic2}
0\to \Z/2\to \Br(K)[2] \to \Br(C)[2] \to \Z/2 \to 0.
\end{gather}
and any class not in the image of $\Br(K)[2] \to \Br(C)[2]$ is
contained in the image of $\Br(K)[4] \to \Br(C)[4]$.
\end{lemma}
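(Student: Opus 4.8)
The plan is to use the Hochschild--Serre / Leray spectral sequence for the conic $C$ together with the classical computation of the (cohomological) Brauer group of a Severi--Brauer variety. First I would recall that for a smooth conic $C$ over $K$, viewed as the Severi--Brauer variety of the quaternion algebra $A$ with class $\alpha$, there is an exact sequence
\[
0 \to \Z/2 \to \Br(K) \mapto{\iota} \Br(C)
\]
with kernel generated by $\alpha$; this is a special case of the Amitsur/Lichtenbaum description of $\Br(C)$ for genus-zero curves, and surjectivity of $\iota$ comes from the fact that $C$ acquires a rational point over a quadratic extension, so $\ker\bigl(\Br(K)\to\Br(K(\sqrt{d}))\bigr)$ surjects onto $\Br(C)$ via a corestriction argument, while any Brauer class over the quadratic splitting field that extends to $C$ already comes from $K$. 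This establishes \eqref{sBrConic}. The reference \cite[Prop.~4.2.3(a)]{CT95} together with standard facts about conics can be cited to shorten this.

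Next I would extract the $2$-torsion sequence \eqref{sBrConic2} from \eqref{sBrConic}. Applying the snake lemma to multiplication by $2$ on the short exact sequence \eqref{sBrConic}, and using that the subgroup $\Z/2 = \langle\alpha\rangle$ is killed by $2$, I get
\[
0 \to \Z/2 \to \Br(K)[2] \to \Br(C)[2] \to \Z/2 \mapto{\delta} \Br(K)/\langle\alpha\rangle \mapto{2} \Br(K)/\langle\alpha\rangle,
\]
and the point is to identify the image of $\Br(C)[2]$ in $\Br(K)[2]$'s quotient with the kernel of the connecting map $\delta$. Here the hypothesis that $-1 \in K^{\times 2}$ enters: it guarantees that $\alpha = (d, d')$ can be written with both slots equal, hence $\alpha$ is a square in $\Br(K)$ --- explicitly $\alpha = 2\beta$ for $\beta = (d, e)$ with a suitable $e$, using $-1$ a square to split the relevant cocycle --- so $\alpha$ becomes divisible by $2$ in $\Br(K)$, which forces the connecting homomorphism $\delta\colon \Z/2 \to \Br(K)/\langle\alpha\rangle$ to be injective. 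That pins down the four-term exact sequence \eqref{sBrConic2}.

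For the last assertion, I would argue that a class $\xi \in \Br(C)[2]$ not in the image of $\Br(K)[2]$ maps to the nonzero element of the cokernel $\Z/2$; lifting it to $\Br(K)$ via surjectivity of $\iota$ in \eqref{sBrConic} gives some $\eta \in \Br(K)$ with $\iota(\eta) = \xi$, and then $2\eta \in \ker\iota = \langle\alpha\rangle$, so either $2\eta = 0$ (giving $\eta \in \Br(K)[2]$, a contradiction) or $2\eta = \alpha$. In the latter case, using $\alpha = 2\beta$ from the previous step, $\eta - \beta \in \Br(K)[2]$ but we want a $4$-torsion statement on $C$: since $\alpha = \iota$-pulls-back to zero, $4\eta = 2\alpha = 0$ forces $\iota(\eta) = \xi$ with $4\xi = 0$ automatically, so $\xi \in \mathrm{image}(\Br(K)[4] \to \Br(C)[4])$ witnessed by $\eta \in \Br(K)[4]$. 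The main obstacle I anticipate is making the step ``$-1$ a square $\Rightarrow$ $\alpha$ is $2$-divisible in $\Br(K)$, and the connecting map is injective'' fully rigorous rather than slick: one must be careful that divisibility of $\alpha$ in $\Br(K)$ is exactly what controls whether the boundary $\Z/2 \to \Br(K)/\langle\alpha\rangle$ is injective or zero, and track that the $4$-torsion lift is genuinely $4$-torsion on the nose. Everything else is diagram chasing with the snake lemma.
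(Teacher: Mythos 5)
Your first part (the sequence \eqref{sBrConic}) and your final argument for the $\Br(K)[4]$ statement are fine and essentially match the paper: the lifting argument "take any $\eta\in\Br(K)$ with $\iota(\eta)=\xi$, then $2\eta\in\ker\iota=\langle\alpha\rangle$, so $4\eta=0$" is exactly how the paper deduces it from \eqref{sBrConic}. The genuine gap is in the middle step, where you derive \eqref{sBrConic2}. Your snake-lemma strategy (multiplication by $2$ on $0\to\Z/2\to\Br(K)\to\Br(C)\to 0$) is a legitimate and in fact cleaner route than the paper's, but it reduces everything to the claim that $\alpha$ is $2$-divisible in $\Br(K)$ when $-1$ is a square, and your justification of that claim is wrong as written. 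A symbol $\beta=(d,e)$ is a quaternion class, hence $2$-torsion, so $\alpha=2\beta$ would force $\alpha=0$; likewise "both slots equal" gives $(a,a)=(a,-1)$, which is \emph{trivial} when $-1$ is a square, again contradicting nonsplitness. The correct source of $2$-divisibility is precisely the ingredient the paper uses: the sequence $1\to\mu_2\to\mu_4\to\mu_2\to 1$ has boundary maps given by cup product with $(-1)$, so when $-1\in K^{\times2}$ the doubling map $\Br(K)[4]\to\Br(K)[2]$ is surjective, and in particular $\alpha=2\tilde\alpha$ for some $\tilde\alpha\in\Br(K)[4]$ (a degree-$4$ cyclic class, not a quaternion). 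Without this (or an equivalent) input, your proof of \eqref{sBrConic2} does not go through.

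There is also a directional slip in your diagram chase. The correct six-term sequence is $0\to\Z/2\to\Br(K)[2]\to\Br(C)[2]\to\langle\alpha\rangle/2\to\Br(K)/2\Br(K)\to\Br(C)/2\Br(C)\to 0$, with the map out of $\langle\alpha\rangle/2\cong\Z/2$ induced by the inclusion $\langle\alpha\rangle\subset\Br(K)$. What you need is that this map is \emph{zero} (equivalently $\alpha\in 2\Br(K)$), which makes $\Br(C)[2]\to\Z/2$ surjective and yields \eqref{sBrConic2}; asking for it to be "injective," as you wrote, would instead force the cokernel of $\Br(K)[2]\to\Br(C)[2]$ to vanish, the opposite of the assertion. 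Once these two points are repaired, your argument does work and differs from the paper's, which instead runs the snake lemma on the commutative diagram comparing the rows $0\to\Br(K)[2]\to\Br(K)[4]\to\Br(K)[2]\to 0$ and the corresponding (partially exact) row for $C$, identifying the cokernel with the kernel $\Z/2$; both approaches ultimately rest on the same Kummer-sequence/cup-with-$(-1)$ vanishing.
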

\begin{proof}
The proof of \eqref{sBrConic} is well known, but we summarize it here
for convenience, cf.\ \cite[Prop.~1.5]{CT-O}.  The identification of
the kernel of $\iota$ is due to Witt~\cite{Wit35}, and follows from
the fact that $C$ is a Severi--Brauer variety associated to the Brauer
class $\alpha$.  The proof of the surjectivity of $\iota$ follows an
argument with the Hochschild--Serre spectral sequence going back to
the work of Lichtenbaum~\cite{Lic69}, Iskovskikh, and Manin.  We
recall this argument here for convenience.  Let $K^s$ be a separable
closure of $K$ and $\Gamma$ the Galois group of $K^s/K$.  The exact
sequence of low degree terms of the Hochschild--Serre spectral
sequence and Hilbert's theorem 90 gives
\begin{gather*}
0 \to \Pic(C) \to \Pic(C_{K^s})^\Gamma \to \Br(K) \to \ker \bigl(\Br(C) \to
\Br(C_{K^s})\bigr) \to H^1(\Gamma, \Pic(C_{K^s}))
\end{gather*}
Since $C$ is a smooth conic, it has a separable splitting field by a
result of Noether, hence $C_{K^s} \isom \P^1_{K^s}$.  For the
vanishing of $\Br(\P^1_{K^s})$, one can appeal to (a generalization
of) Tsen's theorem on the vanishing of the Brauer group of the
function field of a curve over a separably closed field.
We also use the fact that $\Pic(\P^1_{K^s}) = \Z$ has trivial Galois
action and $H^1(\Gamma,\Z)=0$, while $\Pic(C)$ is generated by
$\omega_C^\vee$, which has degree 2, when $C$ is a nonsplit conic.  Hence the above sequence of
low-degree terms collapses to the desired exact sequence.

As for the second part, the fact that any element of $\Br(C)[2]$ is in
the image of $\Br(K)[4] \to \Br(C)[4]$ follows immediately from
\eqref{sBrConic}, since the kernel has order 2.  For the calculation
of the cokernel of $\iota$, the short exact sequence of group schemes
$1 \to \mu_2 \to \mu_4 \to \mu_2 \to 1$ (assuming that $K$ has characteristic not 2) induces a long exact sequence
in Galois cohomology 
\begin{gather*}
\dotsm \to H^1(K,\mu_2) \to H^2(K,\mu_2) \to H^2(K,\mu_4) \to H^2(K,\mu_2)
\to H^2(K,\mu_2) \to \dotsm
\end{gather*}
where the boundary maps are given by cup product with the class $(-1)
\in H^1(K,\Z/2)$, cf.\ \cite[Lemmas~1,2]{Kah89}. Hence all boundary
maps are zero if $-1$ is a square in $K$.  Since $K$ has
characteristic not 2, we have $\Br(K)[n] = H^2(K,\mu_{n})$ for $n$ a
power of 2.  We then have the following commutative diagram with exact
rows
\begin{gather*}
\xymatrix{
0 \ar[r] & \Br(K)[2] \ar[r]\ar[d] & \Br(K)[4] \ar[r]\ar[d] & \Br(K)[2]
\ar[r]\ar[d] & 0 \\
0 \ar[r] & \Br(C)[2] \ar[r]       & \Br(C)[4] \ar[r]       & \Br(C)[2] &  
}
\end{gather*}
and the snake lemma yields that
$$
\coker\bigl(\Br(K)[2] \to \Br(C)[2]\bigr) \isom \ker\bigl(\Br(K)[2] \to \Br(C)[2]\bigr)
\isom \Z/2
$$ 
as desired, cf.\ \cite[\S7]{kahn_rost_sujatha}.  We use the fact that
$\Br(K)[4] \to \Br(C)[4]$ maps onto $\Br(C)[2]$ to see that the map
$$
\coker\bigl(\Br(K)[2] \to \Br(C)[2]\bigr) \to \coker\bigl(\Br(K)[4] \to \Br(C)[4]\bigr)
$$
is zero, even though $\coker\bigl(\Br(K)[4] \to \Br(C)[4]\bigr)$ might
itself be nonzero.
\end{proof}

\begin{definition}\label{dDiscriminant}
Let $\pi \colon Y \to B$ be a conic bundle over a smooth projective
threefold $B$ over an algebraically closed ground field $k$ of
characteristic not $2$.  Let $S$ be the locus of points in $B$ such
that for any (closed) point $s\in S$, the fiber $Y_s$ is singular. We
call $S$ the \emph{discriminant locus} of $\pi$. Let $S_1, \dots ,
S_n$ be its irreducible components; each $S_i$ is then an irreducible
surface.

We call the discriminant locus $S$ \emph{good} if for each $i$, the
fiber $Y_s$ for general $s\in S_i$ consists of two distinct lines, and
the natural double covers $\tilde{S}_i \to S_i$ determined by $\pi$ in
that case are irreducible.
\end{definition}

\begin{remark}\label{rGood}
Keeping the notation of the previous definition, if $S$ is good and
$\alpha\in H^2(K, \Z /2)$ is the Brauer class corresponding to the
generic fiber of $\pi$, then the surfaces $S_i$ are precisely those
surfaces $\Sigma \subset B$ such that $\partial^2_{\Sigma} (\alpha )
\neq 0$. If we drop the assumption that the cover $\tilde{S}_i \to
S_i$ be irreducible, then we could get a trivial class in $H^1
(k(S_i), \Z /2)= k(S_i)^{\times}/k(S_i)^{\times 2}$.
\end{remark}

We can now go back to our geometric situation and state an algebraic
version of the theorem that computes $H^2_{\mathrm{nr}}(k(Z)/k, \Z
/2)$ for us in many cases.

\begin{theorem}\label{tAuelPirutkaAlgebraicVersion}
Let $k$ be an algebraically closed field of characteristic not $2$ and
let $\pi \colon Y \to B$ be a conic bundle over a smooth projective
threefold $B$ over $k$. Let $\alpha \in \mathrm{Br}(K)[2]$ be the
Brauer class in $K =k (B)$ corresponding to the generic fiber of
$\pi$.  Assume that the discriminant locus of $\pi$ is good with
components $S_1, \dots , S_n$.  We will also assume the following:
\begin{enumerate}
\item\label{cThreefold} The vanishing $\Br(B)[2]=0$ and
$\Het^3(B, \Z /2) =0$ holds.
\item\label{cCombinatoricsIntersection1} Through any irreducible curve
in $B$, there pass at most two surfaces from the set $S_1, \dots ,
S_n$.
\item\label{cCombinatoricsIntersection2} Through any point of $B$,
there pass at most three surfaces from the set $S_1, \dots, S_n$.
\item\label{cLocalStructure} For all $i\neq j$, $S_i$ and $S_j$ are
factorial at every point of $S_i\cap S_j$.
\end{enumerate}
Put
\[
\gamma_i = \partial^2_{S_i} (\alpha ) \in H^1 (k (S_i) , \Z /2).
\]
Define a subgroup $\Gamma$ of the group $\bigoplus_{i=1}^n H^1(k(S_i), \Z/2)$ by
\[
\Gamma=\bigoplus_{i=1}^n \langle \gamma_i \rangle .
\]
Thus $\Gamma \simeq (\Z /2)^n$. We will write elements of $\Gamma$ as
$(x_1, \dots , x_n)$ with $x_i \in \{ 0,1\}$.

Let $H\subset \Gamma$ consist of those elements $(x_1, \dots ,
x_n)\in (\Z /2)^n$ such that $x_i=x_j$ for $i\neq j$ whenever there
exists an irreducible component $C$ of $S_i\cap S_j$ such that either
\begin{itemize}
\item[\it i)] $\partial^1_C(\gamma_i ) = \partial^1_C(\gamma_j) =1$, or

\item[\it ii)] $\partial^1_C (\gamma_i) = \partial^1_C (\gamma_j) =0$ and
$\gamma_i|_C$ and $\gamma_j|_C$ 
are not both zero in $H^1 (k (C), \Z/2)$.
\end{itemize} 
Then the 2-torsion of the unramified Brauer group $H^2_{\mathrm{nr}}
(k (Y)/k , \Z/2)$ of $Y$ contains the subquotient $H /\langle (1,
\dots , 1)\rangle$ by the ``diagonal subgroup" $\langle (1, \dots ,
1)\rangle$ of $\Gamma$, and is equal to it under the following
additional geometric assumption
\begin{itemize}
\item[\it iii)] If $\partial^1_C (\gamma_i) = \partial^1_C (\gamma_j)
=0$ in an irreducible component of the intersection $S_i \cap S_j$,
then $S_i$ and $S_j$ intersect generically transversally along $C$ and
the rank of the conics in the fibers of $Y$ is generically $2$ over
$C$.
\end{itemize}
\end{theorem}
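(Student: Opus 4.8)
The plan is to compute $H^2_{\mathrm{nr}}(k(Y)/k,\Z/2)$ by first descending to a convenient smooth model and then running a case analysis over the stratification of $B$ by the intersection pattern of the $S_i$. Write $L = k(Y)$ and recall from Lemma~\ref{coKerConic} that since the generic fiber of $\pi$ is a smooth nonsplit conic over $K = k(B)$ with Brauer class $\alpha$, the pullback $\Br(K) \to \Br(L)$ has kernel $\langle\alpha\rangle \simeq \Z/2$ and cokernel $\Z/2$; because $k$ is algebraically closed, $-1$ is a square, so \eqref{sBrConic2} applies and gives precise control of $\Br(K)[2] \to \Br(L)[2]$ in terms of $4$-torsion. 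The idea is that every class in $\Br(L)[2]$ either comes from $\Br(K)[2]$ or is obtained by pulling back a $4$-torsion class on $K$; in either case an unramified class on $L$ is pinned down by its residues on $B$, which are classes of the form $\sum_i x_i\gamma_i$ for $x_i \in \Z/2$ (using that $S$ is good so that $\partial^2_{S_i}(\alpha) = \gamma_i$ and $\partial^2_\Sigma(\alpha) = 0$ for all other $\Sigma$, per Remark~\ref{rGood}). By assumption~\eqref{cThreefold}, $\Br(B)[2] = 0$ and $\Het^3(B,\Z/2) = 0$, which kills the ambiguity coming from $\Br(K)[2]$ and from $H^1$ of the residue fields that would otherwise survive, so that the unramified classes on $Y$ are in bijection with the elements of $\Gamma = \bigoplus\langle\gamma_i\rangle$ modulo the diagonal that satisfy the local conditions at codimension-$2$ and codimension-$3$ loci.

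Concretely, I would proceed as follows. First, identify a class $\beta \in \Br_{\mathrm{nr}}(L/k)[2]$ with its residue data: apply Proposition~\ref{pPurityResult} and Corollary~\ref{cPurityResult} to a smooth model of $Y$ (built by pulling back a suitable model of $B$), and use the Faddeev-type exact sequence / Bloch--Ogus complex for $B$ together with~\eqref{cThreefold} to show that such a $\beta$ is detected by an element $(x_1,\dots,x_n) \in \Gamma$, i.e. $\beta$ is the image of $\alpha \cup f$ for some $f \in K^\times$ with $v_{S_i}(f) = x_i \pmod 2$ and $v_\Sigma(f)$ even for all other prime divisors $\Sigma$ of $B$. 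Second, determine exactly when such a $(x_1,\dots,x_n)$ yields an unramified class on $Y$: the residue of $\alpha\cup f$ along the preimage of a prime divisor of $B$ not in $\{S_i\}$ is automatically trivial, while along (the preimage of) $S_i$ the residue computation in the conic bundle total space — using that on $Y$ the class $\alpha$ becomes trivial — forces constraints only at the codimension-$2$ strata $C \subset S_i \cap S_j$. Here one computes $\partial^1_C$ applied to the relevant residue: the two bulleted conditions i) and ii) are exactly the conditions under which the residue fails to vanish unless $x_i = x_j$; condition i) is the case where both $\gamma_i,\gamma_j$ ramify along $C$ and one needs the ``tame symbol'' contributions to cancel, and condition ii) is the unramified-but-nonzero case where the residue lands in $H^1(k(C),\Z/2)$ and only $x_i = x_j$ makes it vanish. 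This identifies $H \subset \Gamma$ as the group of ``globally consistent'' tuples, and quotienting by $\langle(1,\dots,1)\rangle$ accounts for the kernel $\langle\alpha\rangle$ of $\Br(K) \to \Br(L)$.

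Third, for the containment $H/\langle(1,\dots,1)\rangle \hookrightarrow H^2_{\mathrm{nr}}(k(Y)/k,\Z/2)$ I would exhibit, for each consistent tuple, an explicit $f \in K^\times$ realizing it (a product of local equations of the $S_i$ with the prescribed $x_i$, adjusted by squares at other divisors), check it is unramified at every divisorial valuation of $Y$ with center on a chosen smooth model — the codimension-$2$ checks are governed by i), ii), and the codimension-$3$ checks by assumptions~\eqref{cCombinatoricsIntersection1}, \eqref{cCombinatoricsIntersection2}, \eqref{cLocalStructure}, which guarantee that at most three surfaces meet and that they are factorial, so the local picture is a union of at most three smooth branches and the residue bookkeeping closes up. The reverse inclusion (equality) under the extra hypothesis iii) requires showing no class \emph{outside} $\Gamma \pmod{\text{squares}}$ can be unramified: here one uses~\eqref{cThreefold} again (so nothing comes from $\Br(B)$ or $\Het^3(B,\Z/2)$) and the transversality/rank-$2$ condition iii) to rule out exotic unramified contributions supported on the codimension-$2$ strata where $\partial^1_C(\gamma_i) = \partial^1_C(\gamma_j) = 0$; without iii) such a stratum could a priori carry extra unramified classes coming from the residue on the double cover, and iii) forces the geometry there to be as simple as possible so this does not happen.

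The main obstacle I expect is the codimension-$2$ residue analysis in the total space $Y$, i.e. correctly computing $\partial^2$ along the preimage in $Y$ of a component $C$ of $S_i \cap S_j$ and translating the vanishing of that residue into conditions i)--iii). This is where the subtlety of the conic bundle enters: the valuation on $k(Y)$ extending $v_C$ depends on the rank of the conic over $C$ and on the behavior of the double cover $\tilde S_i \to S_i$ restricted to $C$ (whether $\gamma_i|_C$ is zero or not), and one must carefully match the tame symbol formula~\eqref{fResH2} with the geometry. The factoriality hypothesis~\eqref{cLocalStructure} is precisely what makes the local rings at points of $S_i \cap S_j$ behave well enough that these residues can be computed divisor-by-divisor, and hypotheses~\eqref{cCombinatoricsIntersection1}--\eqref{cCombinatoricsIntersection2} bound the combinatorial complexity so that the consistency conditions defining $H$ are the only ones that arise.
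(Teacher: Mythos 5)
Your overall architecture matches the paper's: reduce via Lemma~\ref{coKerConic} (including the $\Z/4$-lifting trick for classes of $\Br(C)[2]$ not in the image of $\Br(K)[2]$) and the Bloch--Ogus complex for $B$ with hypothesis \ref{cThreefold}) to candidate classes $\beta \in H^2(K,\Z/2)$ whose residue data lie in $\Gamma$, then check unramifiedness valuation-by-valuation according to the dimension and position of the center, using \ref{cCombinatoricsIntersection1})--\ref{cLocalStructure}) and purity. (One slip: ``$\alpha \cup f$'' is not a Brauer class --- the cup product of an $H^2$-class with an $H^1$-class lands in $H^3$; the candidates are simply classes $\beta\in H^2(K,\Z/2)$ glued from the prescribed residues $\sum x_i\gamma_i$ by exactness of the Bloch--Ogus row, which you also invoke, so this is repairable.) The genuine gap is that the computations on which the theorem actually turns are deferred rather than done. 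For the containment, at a valuation of $k(Y)$ centered on a component $C$ of $S_i\cap S_j$ with $x_i\neq x_j$, the paper's argument requires lifting a representative of $\gamma_i$ to a unit $u_{\gamma_i}$ in the local ring of $B$ at the center (this is exactly where factoriality, i.e.\ $C$ being Cartier on $S_i$, enters), subtracting the auxiliary symbol $(u_{\gamma_i},\pi_{S_i})$, and then seeing via the tame-symbol formula that the surviving residue is the class of $u_{\gamma_i}|_C=\gamma_i|_C$, which vanishes precisely under condition \emph{ii)}; you yourself flag this as ``the main obstacle I expect,'' so it is not established in your write-up.

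For the equality under \emph{iii)}, your description of its role is off: it is not there to rule out ``extra unramified classes'' appearing on the strata, but to produce a specific divisorial valuation witnessing ramification of every $\beta\in H'\setminus H$. Concretely, the paper uses \emph{iii)} through the local normal forms (Proposition~\ref{pNormalForms}) to show that over such a $C$ the total space $Y$ is singular along a unique curve $C'$, generically one-to-one over $C$, and that blowing up $C'$ yields an exceptional divisor $E$ birational to $\P^1\times\P^1\times C'$; the residue of $\beta'$ along $\nu_E$ is then the image of $\gamma_i|_C$ under $H^1(k(C),\Z/2)\to H^1(k(E),\Z/2)$, which is nonzero because a nonsquare cannot become a square in a purely transcendental extension. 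Without identifying this valuation and this last fact, the ``only if'' half of the theorem is unproven; appealing to transversality and the rank-$2$ condition making the geometry ``as simple as possible'' does not by itself yield the required ramification statement. In short: right skeleton, essentially the paper's route, but the decisive residue computations (the paper's Steps 2b--2d and Step 3) are missing.
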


Later, we will reformulate various portions of
Theorem~\ref{tAuelPirutkaAlgebraicVersion} more geometrically. Before
embarking on the proof, a few explanatory remarks are in order.

\begin{remark}\label{rExtraAssumption}
We do not know if the assumption iii) is necessary or redundant, i.e.,
whether we have equality $H^2_{\mathrm{nr}} (k (Y)/k , \Z/2)=H
/\langle (1, \dots , 1)\rangle$ without it. It is conceivable that in
any case there is a conic bundle $Y' \to B$, birational to $Y$ over
$B$, such that iii) is satisfied. However, for us iii) serves as a
harmless simplifying assumption.
\end{remark}

\begin{remark}\label{rConditions}
Conditions \ref{cCombinatoricsIntersection1}) and
\ref{cCombinatoricsIntersection2}) are obviously simplifying
assumptions on the intersection graph of the $S_1, \dots , S_n$. They
could be replaced by different ones, but this would make the
description of the unramified Brauer group $H^2_{\mathrm{nr}} (k (Y)/k
, \Z/2)$ messier. On the other hand, condition \ref{cLocalStructure})
is a hypothesis on the local algebraic structure, and something of
that sort is probably indispensable in any version of Theorem
\ref{tAuelPirutkaAlgebraicVersion}.  Condition \ref{cThreefold}) is
needed to glue certain Galois $H^1$-classes into Brauer classes on $B$
as we will see below.
\end{remark}


\begin{proof}[Proof of Theorem \ref{tAuelPirutkaAlgebraicVersion}]
It is a bit lengthy and we divide it into steps to make the logic clearer.

\medskip

\textbf{Step 1.} \emph{Inducing all potentially unramified Brauer
classes in $H^2_{\mathrm{nr}} (k (Y)/k, \Z /2)$ from Brauer classes on
$B$ that are glued from a compatible set of
$\gamma_i=\partial^2_{S_i}(\alpha )$.}  The first question is how we
can describe a totality of classes in $H^2 (k(Y), \Z /2)$ that are the
only candidates to yield unramified classes in $H^2_{\mathrm{nr}} (k
(Y)/k , \Z /2)$. This is done via the following commutative diagram:

{\tiny
\begin{gather}\label{dBasic}
\xymatrix@R=14pt{
& & 0 & &\\
& & \Z/2\ar[u] & &\\
0 \ar[r] & H^2_{\mathrm{nr}} (k (Y)/Y , \Z /2) \ar[r] & H^2_{\mathrm{nr}} (k (Y)/K, \Z /2) \ar[u]\ar[r]^{\oplus \partial^2_{T} \quad\quad} & \bigoplus_{T \in Y^{(1)}_B} H^1 (k (T), \Z /2) \\
            & \mathrm{Br}_{\mathrm{nr}}(K)[2] =0 \ar[r] & H^2 (K, \Z /2) = \mathrm{Br} (K) [2] \ar[r]^{\oplus \partial^2_S}  \ar[u]^(.45){\iota }               & \bigoplus_{S \in B^{(1)}} H^1 (k (S), \Z /2)\ar[u]^(.38){\tau} \ar[r]^{\oplus (\oplus \partial^1_C)} & \bigoplus_{C \in B^{(2)}} H^0 ( k (C), \Z /2) \\
           &                                                                           & \langle \alpha \rangle   \ar[u]                                                                                                    & \KK  \ar[u]                                &        \\
           &                                                                           &                       0 \ar[u]                                                                                                                    &  0 \ar[u]                                                                                                                                         & 
}
\end{gather}
}

We will start by explaining the new pieces of notation:
$H^2_{\mathrm{nr}} (k (Y)/Y , \Z /2) $ denotes all those classes in
$H^2 (k(Y), \Z/2)$ which are unramified with respect to divisorial
valuations corresponding to prime divisors (threefolds) on $Y$. Note
that the singular locus of $Y$ has codimension $\ge 2$ by our
assumptions. By Corollary \ref{cPurityResult}, we can also
characterize $H^2_{\mathrm{nr}} (k (Y)/Y , \Z /2) $ as all those
classes in $H^2 (k(Y), \Z/2)$ that are unramified with respect to
divisorial valuations which have centers on $Y$ which are not
contained in $Y_{\mathrm{sing}}$. Moreover, $H^2_{\mathrm{nr}} (k
(Y)/K, \Z /2)$ is the subset of those classes in $H^2 (k(Y), \Z/2)$
which are unramified with respect to divisorial valuations that are
trivial on $K$, hence correspond to prime divisors of $Y$ dominating
the base $B$ (since $\pi$ is of relative dimension $1$).

In the upper row, $T$ runs over all irreducible threefolds, i.e.,
prime divisors, in $Y$ that do not dominate the base $B$, hence map to
some surface in $B$. We call this set of irreducible threefolds
$Y^{(1)}_B$. Then the upper row is exact by the very definitions.

In the lower row, $S$ runs over the set of all irreducible surfaces
$B^{(1)}$ in $B$ and $C$ over the set of all irreducible curves
$B^{(2)}$ in $B$.  Thus this row coincides with the usual Bloch--Ogus
complex for degree 2 \'etale cohomology associated to $B$.  The $i$th
cohomology group of this complex is computed by the Zariski cohomology
$H^i(B,\HH^2)$ of the \'etale cohomology sheaf $\HH^i$, which is the
sheafification of the Zariski presheaf $U \mapsto \Het^2(U,\Z/2\Z)$,
see \cite[Thm.~6.1]{bloch_ogus}.  In particular, the lower row is
exact in the first two places because $H^0(B,\HH^2) = \Br(B)[2] =0$
and $H^1(B,\HH^2) \subset \Het^3(B,\Z/2)=0$ by hypothesis, where the
later inclusion arises from the sequence of low terms associated to
the Bloch--Ogus spectral sequence $H^i(B,\HH^j) \Rightarrow
\Het^{i+j}(B,\Z/2)$, cf.\ \cite[\S1.1]{Kah95}.

Now let us discuss the vertical arrows.  The left vertical column is
Lemma~\ref{coKerConic}.  The map $\tau$, defined by pullback under the
field extensions $k(T) \supset k(S)$, coincides with the induced
$k(S)^\times/k(S)^{\times 2} \to k(T)^\times/k(T)^{\times 2}$.  If the
generic fiber of $T\to S$ is geometrically integral, then $k(S)$ is
algebraically closed inside $k(T)$, hence this induced map is
injective.  This is the case if $S$ is not contained in the
discriminant locus, since then the generic fiber of $T \to S$ is a
smooth conic.  If $S=S_i$ is a component of the discriminant locus,
then the generic fiber of $T_i \to S_i$ is geometrically the union of
two lines; Stein factorization displays this generic fiber as a line
over the quadratic extension $F/k(S_i)$ defined by the residue class
$\gamma_i \in H^1(k(S_i),\Z/2)$.  In this case, the
restriction-corestriction exact sequence in Galois cohomology implies
that the kernel of the natural map $H^1(k(S_i),\Z/2) \to H^1(F,\Z/2)$
is generated by $\gamma_i$ (and also the natural map $H^1(F,\Z/2) \to
H^1(F(t),\Z/2)$ is injective).  We conclude that the kernel of $\tau$  is
\begin{gather}\label{fKernel}
\KK \simeq \langle \gamma_1 \rangle \oplus \dots \oplus \langle \gamma_n \rangle = \Gamma.
\end{gather}

We argue that even though $\iota$ is not surjective, the subgroup
$\Hur^2(k (Y)/Y , \Z /2) \subset \Hur^2(k (Y)/K , \Z /2)$ is in the
image of $\iota$.  By Lemma~\ref{coKerConic}, any element
\[
\zeta \in \Hur^2(k(Y)/K,\Z/2)\]
not in the image of $\iota$ lifts to some $\xi \in H^2(K,\Z/4)$ of
order 4.  Then at least one residue $\partial^2_S(\xi) \in
H^1(k(S),\Z/4)$ must have order 4, since the map $\oplus\partial^2_S$
is injective (i.e., we consider the lower row of diagram
(\ref{dBasic}) with $\Z /4$ coefficients now).  Since $\KK$ is an
elementary abelian 2-group and also equals the kernel of the map
$\tau$ for $\Z/4$ coefficients
\[
\tau \colon \bigoplus_{S\in B^{(1)}} H^1 (k(S), \Z /4) \to \bigoplus_{T\in Y^{(1)}_B} H^1 (k(T), \Z/4), 
\]
$\tau(\partial^2_S(\xi)) \in H^1(k(T),\Z/4)$ cannot be trivial.  Since
the diagram commutes, we see that $\partial^2_T(\zeta)$ is nontrivial,
hence $\zeta$ cannot lie in $\Hur^2(k(Y)/Y,\Z/2)$.  This same diagram
chase for the diagram (\ref{dBasic}) yields that the group
$H^2_{\mathrm{nr}} (k (Y)/Y , \Z /2) $ can be described as the
quotient by $\langle (1,\dots , 1 ) \rangle$ of the subgroup
$H'\subset (\Z /2)^n$ defined only using condition {\it i)} of the
definition of $H$ in the statement of
Theorem~\ref{tAuelPirutkaAlgebraicVersion}.  Note also that we use
assumption \ref{cCombinatoricsIntersection1}) (namely, each $C$
determines a unique pair $S_i, S_j$ such that $C$ is a component of
$S_i\cap S_j$) to ensure that elements in $H'$ make up the kernel of
$\oplus (\oplus \partial^1_C)$ in diagram (\ref{dBasic}).

\medskip

\textbf{Step 2.} \emph{Figuring out which classes in $H'$ give classes in $H^2_{\mathrm{nr}} (k(Y)/k, \Z /2)$ by checking whether they are unramified with respect to all divisorial valuations $\nu$ of $k(Y)$: a case-by-case analysis depending on the dimension and location of the center of $\nu$ on $B$.}

We pick a class $\beta \in H^2 (K, \Z /2)$ corresponding to an element in $H'$, and denote by $\beta'$ the image of $\beta$ in $H^2 (k(Y), \Z /2)$. We want to show that $\beta'$ is unramified on $Y$ if and only if $\beta$ is in $H$. We first prove the 
if part by a case-by-case analysis, and the only if part in Step 3 below. 

\textbf{Step 2. a)} \emph{The center of $\nu$ on $B$ is not contained in the intersection of two or more of the discriminant components.}
Denote by $\OO$ the local ring of the center $Z$ of $\nu$ on $B$. Then $\beta -\alpha$ is in the image of $\Het^2(\OO, \Z /2)$ by Proposition \ref{pPurityResult}. But $\iota (\beta -\alpha ) = \iota (\beta )$, so this class is also unramified with respect to $\nu$ in this case.

\medskip

\textbf{Step 2. b)} \emph{The center $\nu$ on $B$ is a curve $C$ that is an irreducible component of $S_i\cap S_j$.}

Let $\OO$ be the local ring of $C$ in $B$. If $\beta$ has $x_i=x_j=1$,
then again $\beta -\alpha$ is in the image of $\Het^2(\OO, \Z /2)$ by
Proposition \ref{pPurityResult}, and we conclude as before. So we can
assume $x_i=1, x_j=0$ and then also $\partial^1_C(\gamma_i ) =0$. This
condition means that a function representing $\gamma_i
= \partial^2_{S_i}(\beta )\in H^1 (k(S_i), \Z /2)
=k(S_i)^{\times}/k(S_i)^{\times 2}$ has a zero or pole of even order
along $C$. Moreover, $\gamma_j$ can be represented by $1$ in
$k(S_j)^{\times}$. Passing to the inverse of the function representing
$\gamma_i$ if necessary (multiplying by squares does not change its
class in $H^1 (k(S_i), \Z /2)$), we can assume that it is contained in
the local ring $\OO_{S_i,C}$ of $C$ in $S_i$. Call this function
$f_{\gamma_i}$. Choose a local equation $t$ for $C$ in $\OO_{S_i,C}$. Note that $S_i$ is factorial along $C$, so $C$ is a Cartier
divisor on $S_i$.

Then $f_{\gamma_i}/ (t^{v_C(f_{\gamma_i})})$ is a unit in
$\OO_{S_i,C}$, hence any preimage in $\OO$ will be a unit. Call this
preimage $u_{\gamma_i}$. For $u_{\gamma_j}$ we could take $1$. Now
viewing $u_{\gamma_i}$ as a rational function in $K$, the function
field of $B$, and choosing a local equation $\pi_{S_i}$ for $S_i$ in
$\OO$ (also viewed as a function in $K$) we can form the symbol
$(u_{\gamma_i}, \pi_{S_i}) \in H^2 (K, \Z /2)$. Using formula
(\ref{fResH2}), we conclude that
\[
\partial^2_{S_i} (\beta) = \gamma_i = \partial^2_{S_i} (u_{\gamma_i}, \pi_{S_i})
\]
by construction of $u_{\gamma_i}$. Moreover, $\beta - (u_{\gamma_i},
\pi_{S_i})$ is then in the image of $\Het^2(\OO , \Z /2)$
using Proposition \ref{pPurityResult} again. Here we are using that we
have lifted $f_{\gamma_i}$ to a unit $u_{\gamma_i}$ to ensure that
$\partial^2_{S} (u_{\gamma_i}, \pi_{S_i}) =0$ for every other surface
$S$ different from $S_i$ through $C$.
Hence 
\[
\partial^2_{\nu} (\iota (\beta - (u_{\gamma_i}, \pi_{S_i})))=0,  
\]
so we will have shown that $\partial^2_{\nu} (\iota (\beta )) = \partial^2_{\nu}(\beta') =0$ once we know $\partial^2_{\nu}(\iota (u_{\gamma_i}, \pi_{S_i}))=0$. By formula (\ref{fResH2}) we have (up to a sign)

\begin{gather}\label{fResidue1}
\partial^2_{\nu}(\iota (u_{\gamma_i}, \pi_{S_i})) = \overline{u_{\gamma_i}^{\nu (\pi_{S_i})}/\pi_{S_i}^{\nu (u_{\gamma_i})}   } = \overline{u_{\gamma_i}^{\nu (\pi_{S_i})}} \in H^1 (\kappa (\nu ), \Z /2)
\end{gather}
where the second equality follows because $u_{\gamma_i}$ is a unit
along $C$; note that here we are viewing all rational functions in $K$
as functions in $k(Y)$ via the natural extension $K \subset k(Y)$.

On the other hand (up to a sign)
\begin{gather}\label{fResidue2}
\partial_C^2 (\gamma_i,  u^i_C) = \overline{ \gamma_i^{\nu_C (u^i_C)}
/ (u^i_C)^{\nu_C (\gamma_i)}   } = \overline{ \gamma_i^{\nu_C (u^i_C)}   } = \overline{u_{\gamma_i}|_{S_i}}  \in H^1 (k(C), \Z /2)
\end{gather}
where the second equality follows because $\partial^1_C (\gamma_i) =0$
and the third equality because $f_{\gamma_i}$ and the function
$u_{\gamma_i}|_{S_i}$ on $S_i$ differ by a square, by construction.

But since the term in (\ref{fResidue2}) is zero by assumption, so is the term in formula (\ref{fResidue1}).

\medskip

\textbf{Step 2. c)} \emph{The center of $\nu$ is a point $p\in C$ as in Step 2. b), and $S_i, S_j$ are the only surfaces among the $S_1, \dots , S_n$ passing through $p$.}

Let $\OO$ denote the local ring of $p$ in $B$. If $x_i=x_j=1$ we
conclude as above by looking at $\beta - \alpha$. So assume $x_i=1,
x_j=0$. Then $\partial^1_{C}(\gamma_i)=0$. Note that we can find a
local equation $t$ for $C$ in $\OO_{S_i, p}$ since $C$ is Cartier by
the hypothesis that $S_i$ is factorial along $C$. Pick a function
$f_{\gamma_i}\in k(S_i)$ representing $\gamma_i$.  Moreover, for any
other irreducible curve $C'$ passing through $p$, either in $S_i \cap
S_j$ or lying entirely on $S_i$ or $S_j$, we will have
$\partial^1_{C'}(\gamma_i) =0$, too. Let $C_1, \dots , C_N$ be all
irreducible curves through $p$ along which $f_{\gamma_i}$ has a zero
or pole, and pick a local equation $t_{\iota}$ in $\OO_{S_i, p}$ for
every $C_{\iota}$. The rational function $f_{\gamma_i}/ \{
t_1^{v_{C_1}(f_{\gamma_i})} \dotsm
t_N^{v_{C_N}(f_{\gamma_i})} \}$ on $S_i$ does not vanish or have a
pole on any curve on $S_i$ that passes through $p$. Hence, since $S$
is assumed to be factorial, in particular, normal in $p$, this
function is a unit locally around $p$, and can be lifted to a unit in
$\OO$. We call this $u_{\gamma_i}$ again. Repeating the rest of the
proof in Step 2 b) verbatim, with $k(C)$ replaced by $k(P)$, and
using that every element in $k(P)$ is a square since $k$ is
algebraically closed, we see that $\partial_{\nu} (\beta ') = 0$ here
as well.

\medskip

\textbf{Step 2. d)} \emph{The center of $\nu$ is a point $p$ that lies on exactly three surfaces $S_i, S_j, S_k$.}

Then $p \in S_i \cap S_j \cap S_k$. If we have $x_i=x_j=x_k=1$, we can
again pass to $\beta - \alpha$ and argue as above, so we can assume
$x_i=1, x_j=x_k=0$, or $x_i=0, x_j=x_k=1$. Moreover, without loss of
generality, we can assume $\beta$ is of type $x_i=1, x_j=x_k=0$ since
if it is of type $x_i=0, x_j=x_k=1$, $\beta -\alpha$ will be of type
$x_i=1, x_j=x_k=0$, and $\partial_{\nu}(\iota (\beta-\alpha ))
= \partial_{\nu}(\iota (\beta))$.  Let $\OO$ be the local ring of $p$
in $B$ again. Since every curve $C$ on $S_i$ passing through $p$,
either on $S_i\cap S_j$ or $S_i\cap S_k$, or only on $S_i$, is Cartier
on the surface $S_i$, we can find a unit $u_{\gamma_i}$ in $\OO$ that,
when restricted to $S_i$, has the same class as $\gamma_i$ in
$H^1(k(S_i), \Z /2)$. We just repeat the argument in Step 2. b). The
rest of the argument is then verbatim as in Step 2 b) (or Step 2 c))
with $k(C)$ again replaced by $k(P)$.

\medskip

\textbf{Step 3.} \emph{Proving that a class $\beta$ in $H'$ yields an
unramified class $\beta'$ on $Y$ only if $\beta \in H$.}

We have to prove that if $\beta$ has $x_i=1$ and $x_j=0$, so that
$\partial^1_C (\gamma_i) = \partial^1_C (\gamma_j)=0$ for every
irreducible component $C$ of $S_i\cap S_j$, and if $\gamma_i|_C$ and
$\gamma_j|_C$ are nonzero in $H^1 (k (C), \Z/2)$, then $\beta'$ is
ramified with respect to some divisorial valuation $\nu$ of $k(Y)$.

We now make use of assumption iii). Because of this, a local calculation, done later in Proposition~\ref{pNormalForms}, 
shows the following: there is a unique irreducible curve $C'$ in which $Y$ is singular and which dominates $C$ in this case. Also, the map $C' \to C$ is generically one-to-one. 
Moreover, blowing up $Y$ in $C'$ yields an exceptional divisor $E$ that is generically a $\P^1\times \P^1$ bundle over $C'$, 
hence birational to $\P^1\times \P^1 \times C'$. Let $\nu=\nu_E$ be the associated valuation. 
Looking back at the computations in Step 2 above, and keeping the notation there, we see from formula (\ref{fResidue1}) and the fact that 
$\nu_E (\pi_{S_i})=1$ (again a local calculation) that $\partial^2_{\nu } (\beta' )$ is equal to $\bar{u}_{\gamma_i}$, viewed as an element of 
$H^1 (k(E), \Z/2)$. Hence, this is nothing but the image, under the natural map $H^1 (k(C), \Z /2) \to H^1 (k(E), \Z /2)$, of $\bar{u}_{\gamma_i}$, 
viewed as an element of $H^1 (k(C), \Z /2)$. But a nonsquare in a field cannot become a square in a purely transcendental extension of that field, 
hence $\partial^2_{\nu} (\beta' ) \neq 0$ in this case. 
\end{proof}

We can reformulate parts of Theorem \ref{tAuelPirutkaAlgebraicVersion}
to obtain the following geometric Corollary that gives sufficient
conditions for a conic bundle $\pi \colon Y \to B$ to have nontrivial
$H^2_{\mathrm{nr}} (k (Y)/k, \Z /2)$.

\begin{corollary}\label{cAuelPirutkaMoreGeometric}
Let $k$ be again some algebraically closed ground field of
characteristic not equal to $2$, $\pi \colon Y \to B$ a conic bundle
over a smooth projective threefold $B$ with
$\Br(B)[2]=\Het^3(B,\Z/2)=0$.

Suppose that the discriminant locus $S = \bigcup_{i=1}^n S_i$ of $\pi$
is good and $n\ge 2$ and suppose that assumptions
\ref{cCombinatoricsIntersection1}),
\ref{cCombinatoricsIntersection2}), \ref{cLocalStructure}) in Theorem
\ref{tAuelPirutkaAlgebraicVersion} are satisfied.

Suppose that for all $i\neq j$ and every irreducible component $C$ of
$S_i\cap S_j$, the fibers of $\pi$ over a general point of $C$ are
still two distinct lines, and that the corresponding double cover
$\tilde{C} \to C$ (inside $\tilde{S}_i$ or $\tilde{S}_j$) is
reducible.

Then the unramified Brauer group of $Y$ is nontrivial. 
\end{corollary}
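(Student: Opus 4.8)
The plan is to read the corollary off from Theorem~\ref{tAuelPirutkaAlgebraicVersion}. All of the theorem's hypotheses are in force here: the discriminant is good, assumption~\ref{cThreefold}) is the vanishing $\Br(B)[2]=\Het^3(B,\Z/2)=0$, and assumptions \ref{cCombinatoricsIntersection1}), \ref{cCombinatoricsIntersection2}), \ref{cLocalStructure}) are explicitly assumed. Hence $H^2_{\mathrm{nr}}(k(Y)/k,\Z/2)$ contains the subquotient $H/\langle(1,\dots,1)\rangle$ of $\Gamma\simeq(\Z/2)^n$; we use only this containment, so the additional assumption~\textit{iii}) of the theorem is irrelevant for us. Since $n\ge 2$, it therefore suffices to prove that $H=\Gamma$, i.e.\ that the definition of $H$ imposes no relation $x_i=x_j$ at all; then $H/\langle(1,\dots,1)\rangle\simeq(\Z/2)^{n-1}\ne 0$ and $\mathrm{Br}_{\mathrm{nr}}(k(Y)/k)[2]\ne 0$, so the unramified Brauer group of $Y$ is nontrivial.

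So fix $i\ne j$ and an irreducible component $C$ of $S_i\cap S_j$; I must show that neither condition~\textit{i}) nor condition~\textit{ii}) in the definition of $H$ applies to $C$. This is where the two geometric hypotheses enter, after translating them through the dictionary of Definition~\ref{dDiscriminant} and Remark~\ref{rGood}: the square class $\gamma_i=\partial^2_{S_i}(\alpha)\in k(S_i)^\times/k(S_i)^{\times 2}$ is the class defining the double cover $\tilde S_i\to S_i$, whose fiber over a general point of $S_i$ records the two lines of the corresponding singular conic of $\pi$ (Step~1 of the proof of Theorem~\ref{tAuelPirutkaAlgebraicVersion} exhibits this fiber, via Stein factorization, as a line over $k(S_i)(\sqrt{\gamma_i})$). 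Consequently $\partial^1_C(\gamma_i)=1$ precisely when these two lines collapse to a double line along the generic point of $C$, that is, when the fiber of $\pi$ over a general point of $C$ is a double line; this can also be read off from the local normal forms of Proposition~\ref{pNormalForms}. Since by hypothesis the fiber over a general point of $C$ consists of two \emph{distinct} lines, $\partial^1_C(\gamma_i)=0$, and symmetrically $\partial^1_C(\gamma_j)=0$. In particular condition~\textit{i}) does not apply to $C$.

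It remains to rule out condition~\textit{ii}). As $\partial^1_C(\gamma_i)=\partial^1_C(\gamma_j)=0$, the restrictions $\gamma_i|_C,\gamma_j|_C\in H^1(k(C),\Z/2)=k(C)^\times/k(C)^{\times 2}$ are defined, and each of them classifies the double cover $\tilde C\to C$ obtained by restricting $\tilde S_i$, respectively $\tilde S_j$, to $C$, which in both cases is the intrinsic double cover of $C$ cut out by the two lines in the fibers of $\pi$ over $C$; in particular $\gamma_i|_C=\gamma_j|_C$. By hypothesis this double cover is reducible, and a class in $k(C)^\times/k(C)^{\times 2}$ has reducible (i.e.\ split) associated double cover exactly when it is trivial. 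Hence $\gamma_i|_C=\gamma_j|_C=0$, so $\gamma_i|_C$ and $\gamma_j|_C$ \emph{are} both zero, and condition~\textit{ii}) does not apply to $C$ either. Since $i$, $j$, $C$ were arbitrary, $H=\Gamma$, which completes the argument.

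The only non-formal ingredient is the equivalence used in the second paragraph between ``$\partial^1_C(\gamma_i)=1$'' and ``the fiber of $\pi$ over a general point of $C$ is a double line'' --- equivalently, the implication that two distinct lines over a general point of $C$ force $\partial^1_C(\gamma_i)=0$. I expect this to be the one point requiring genuine care: it is a local statement about how the two components of the degenerate conic behave upon specializing from the generic point of $S_i$ to the generic point of $C$, and it is supplied by the normal-form analysis of Section~\ref{sSingularities}, specifically Proposition~\ref{pNormalForms}. Everything else is bookkeeping with the combinatorial group $H$ of Theorem~\ref{tAuelPirutkaAlgebraicVersion}.
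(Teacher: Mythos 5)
Your proposal is correct and follows essentially the same route as the paper: the paper's (two-sentence) proof likewise translates "two distinct lines over the general point of $C$" into $\partial^1_C(\gamma_i)=\partial^1_C(\gamma_j)=0$ and "reducibility of $\tilde C\to C$" into the vanishing of $\gamma_i|_C$ and $\gamma_j|_C$, so that neither condition \textit{i)} nor \textit{ii)} applies, giving $H=\Gamma$ and a nontrivial quotient since $n\ge 2$. Your write-up merely supplies the dictionary (Kummer theory, Stein factorization, the normal forms) in more detail than the paper does.
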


\begin{proof}
The fact that the fibers of $\pi$ over a general point of $C$ are
still two distinct lines means $\partial^1_C (\gamma_i) = \partial^1_C
(\gamma_j) =0$. The condition that $\tilde{C}$ is reducible means that
$\partial_C^2 (\gamma_i, u)$ and $\partial^2_C (\gamma_j, u)$ are
zero.
\end{proof}


\section{Reducibility of the discriminant: 1st method}\label{sHPT}


Subsequently, we will usually restrict our attention to conic bundles
of \emph{graded-free type} over $\P^3$, informally, those defined by a
graded symmetric $3 \times 3$ matrix. We now make this precise.

\begin{definition}\label{dGradedFree}
Fix a triple of non-negative integers
\[
(d_1, d_2, d_3 ) \in \N^3  \quad \mathrm{such}\; \mathrm{that}\; d_i \equiv d_j \, \mathrm{(mod}\, 2 \mathrm{)}\: \forall \, i, j.
\]
Consider a symmetric matrix of homogeneous polynomials on $\P^3$
\[
A = \begin{pmatrix}
a_{11} & a_{12} & a_{13} \\
a_{21} & a_{22} & a_{23} \\
a_{31} & a_{32} & a_{33}
\end{pmatrix}
\]
where 
\begin{gather}\label{fGradingConditions}
a_{ij} =a_{ji}, \quad \deg (a_{ii}) = d_i, \quad \deg (a_{ij}) = \frac{d_i+d_j}{2} .
\end{gather}

Put

\begin{gather}\label{fTwists}
d= \sum_i d_i, \quad r_i = \frac{d-d_i}{2} , \quad s_i = \frac{d+d_i}{2} \\
\EE = \OO (r_1) \oplus \OO (r_2) \oplus \OO (r_3) .
\end{gather}

Then $A$ determines a symmetric map between graded free bundles
\[
A\colon \EE(-d) =\OO (-s_1) \oplus \OO (-s_2) \oplus \OO (-s_3) \to  \EE^{\vee} = \OO (-r_1) \oplus \OO (-r_2) \oplus \OO (-r_3) 
\]
hence a line bundle valued map  
\[
\mathrm{Sym}^2 \EE  \to \OO (d)
\]
determining a conic bundle $Y \subset \P (\EE )\to \P^3$ if the entries of $A$ do not vanish simultaneously in any point of $\P^3$. Such a conic bundle will be called of \emph{graded free type}. 
\end{definition}

\begin{example}\label{eCubicFourfold}
If $Y \subset \P^5$ is a cubic hypersurface containing a line $\ell
\subset \P^5$.  The projection $\P^5 \dashrightarrow \P^3$ from $\ell$
is resolved by the blow up $\widetilde{\P}^5$ of $\P^5$ along $\ell$.
The resulting morphism $\widetilde{\P}^5 \to \P^3$ has the structure
of a projective bundle $\P(\EE)$, where $\EE =
\OO(1)\oplus\OO(2)\oplus(2)$.  Restricting this morphism to the blow
up $\widetilde{Y} \subset \widetilde{\P}^5$ of $Y$ along $\ell$, then
$\widetilde{Y} \to \P^3$ is a conic bundle of graded free type
$(3,1,1)$, cf.\ \cite{Tog40}.
\end{example}

We now derive a result saying that certain discriminant surfaces $F$
of conic bundles of graded-free type over $\P^3$ split if pulled back
via a suitable double cover.

\begin{definition}\label{dNode}
A point $p$ on a surface $F$ in $\P^3$ is called a node if \[
\widehat{\OO}_{F, p} \simeq k\llbracket x,y,z\rrbracket /(xy-z^2).\]
\end{definition}

\begin{proposition}\label{pSplitting}
Let $F$ be a surface in $\P^3$ with at most nodes as
singularities. Suppose that for a desingularization $\tilde{F}$ of
$F$, $H^1_{\acute{e}t} (\tilde{F}, \Z/2) =0$, or equivalently,
$H^1_{\mathrm{nr}} (k (F)/k, \Z/2) =0$. Let $G$ be a ``contact
surface" to $F$, i.e., as schemes $G \cap F=2C$ for some curve $C$
on $F$, and suppose moreover, that $G$ has even multiplicity
$\alpha_i$ at every node $p_i$ of $F$ (this also allows $\alpha_i=0$
of course, whence $G$ does not pass through that particular
node). Assume that $G$ has even degree. Then $F$ splits in the double
cover of $\P^3$ branched in $G$.
\end{proposition}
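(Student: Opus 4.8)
The plan is first to translate the conclusion into the vanishing of a single square class, and then to check that class is unramified. Let $g$ be a defining equation of $G$, of even degree $2m$, so that the double cover $\rho\colon W\to\P^3$ branched in $G$ is the hypersurface $\{w^2=g\}$ in the total space of $\OO_{\P^3}(m)$ --- the even-degree hypothesis is exactly what makes this cover exist. Since $F$ is irreducible and $F\not\subset G$ (the intersection $G\cap F$ being a curve), the preimage $\rho^{-1}(F)=F\times_{\P^3}W$ is a double cover of $F$, and ``$F$ splits in $W$'' means precisely that this cover is reducible, equivalently that $g|_F$ is a square in $k(F)^{\times}$. So, writing $\beta:=\overline{g|_F}\in k(F)^{\times}/k(F)^{\times 2}=H^1(k(F),\Z/2)$ (a well-defined square class because $2m$ is even, e.g.\ via $g|_F/(\ell|_F)^{2m}$), the goal is to prove $\beta=0$.

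Let $\nu\colon\tilde F\to F$ be the desingularization obtained by resolving the nodes. Since $\tilde F$ is smooth and projective over the algebraically closed $k$, the hypothesis (in either of its equivalent forms) gives $H^1_{\mathrm{nr}}(k(F)/k,\Z/2)\cong H^1_{\et}(\tilde F,\Z/2)=0$, so it suffices to show $\beta$ is unramified; by the purity Corollary~\ref{cPurityResult} applied to $\tilde F$, this reduces to $\partial^1_D(\beta)=0$ for every prime divisor $D$ of $\tilde F$. As $\partial^1_D(\beta)=\mathrm{ord}_D\bigl(\nu^{*}(g|_F)\bigr)\bmod 2$ (up to squares), the whole statement amounts to showing that the divisor of the pulled-back equation, $\nu^{*}(g|_F)$, is twice a divisor on $\tilde F$ as a cycle.

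For $D$ not contracted by $\nu$, it maps birationally onto a curve $D_0\subset F$, and $D_0\not\subset F_{\mathrm{sing}}$ since the singular locus is finite; near the generic point of $D_0$ the scheme-theoretic identity $G\cap F=2C$ forces $g|_F$ to vanish along $D_0$ to even order $2\,\mathrm{ord}_{D_0}(C)$, so $\partial^1_D(\beta)=0$. The remaining divisors are the exceptional curves $E_i\cong\P^1$ over the nodes $p_i$, and these are the crux. I would handle them by realizing $\nu$ as the restriction of the blow up $\sigma\colon\widehat{\P}\to\P^3$ of $\P^3$ at the finitely many nodes: then the strict transform $\tilde F\subset\widehat{\P}$ of $F$ is smooth, $E_i=\tilde F\cap\widehat E_i$ is the conic cut out on $\widehat E_i\cong\P^2$ by the tangent cone of $F$, and restricting the total transform $\sigma^{*}G=\widehat G+\sum_i\alpha_i\widehat E_i$ to $\tilde F$ yields
\[
\nu^{*}(g|_F)=(\widehat G\cap\tilde F)+\sum_i\alpha_i E_i ,
\]
where $\alpha_i=\mathrm{mult}_{p_i}(G)$. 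Away from the $E_i$ this divisor equals $2C$, so $\widehat G\cap\tilde F=2\tilde C+\sum_i e_i E_i$ with $e_i\ge 0$, whence $\mathrm{ord}_{E_i}\bigl(\nu^{*}(g|_F)\bigr)=\alpha_i+e_i$. The hypothesis that $\alpha_i$ is even disposes of the first term, and a local computation in $\widehat{\OO}_{F,p_i}\cong k\llbracket x,y,z\rrbracket/(xy-z^2)$ --- tracking how the tangent cone of $G$ sits relative to the quadric-cone tangent cone of $F$, and using $G\cap F=2C$ --- shows $e_i$ is even too. With every local order even, $\nu^{*}(g|_F)$ is twice a divisor, hence $\beta$ is unramified, hence $\beta=0$, hence $F$ splits.

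I expect the main obstacle to be exactly this last local computation at the nodes. The subtlety is that $C$ need not be Cartier on $F$ at a node --- the model bad case is a ruling of the quadric cone, where $g|_F$ has odd order at the node --- so the contact condition alone does not make $\nu^{*}(g|_F)$ have even order along $E_i$; it is precisely the extra hypothesis that $G$ have even multiplicity at each node, inserted into the local normal form, that supplies the needed parity.
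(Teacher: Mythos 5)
Your strategy is the same as the paper's: identify the splitting of $F$ in the double cover with the vanishing of the square class of $g|_F$ in $H^1(k(F),\Z/2)$, use the hypothesis $H^1_{\mathrm{nr}}(k(F)/k,\Z/2)=0$ together with purity (Proposition~\ref{pPurityResult}) to reduce to unramifiedness along prime divisors of $\tilde F$, handle the non-exceptional divisors via $G\cap F=2C$, and compute the order along the exceptional $(-2)$-curves by blowing up the nodes in $\P^3$. All of that matches the paper. The problem is the step you yourself call the crux: you write $\mathrm{ord}_{E_i}\bigl(\nu^*(g|_F)\bigr)=\alpha_i+e_i$ with $e_i=\mathrm{ord}_{E_i}\bigl(\widehat G|_{\tilde F}\bigr)$ and then assert, without carrying it out, that ``a local computation in $k\llbracket x,y,z\rrbracket/(xy-z^2)$ shows $e_i$ is even too.'' That claim is not a consequence of the local hypotheses, so the proof does not close. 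Concretely, let $F$ be locally the cone $xy-z^2=0$ and let $G$ be locally $p_1p_2p_3+(xy-z^2)=0$, where $p_1,p_2,p_3$ are the tangent planes to the cone along three distinct rulings $L_1,L_2,L_3$. Then $G\cap F=2(L_1+L_2+L_3)$ is twice a reduced curve and $\mathrm{mult}_{p_i}(G)=2$ is even, but the tangent cone of $G$ coincides with that of $F$, so the strict transform of $G$ contains the exceptional conic and $\nu^*(g|_F)$ vanishes to order $3$ along $E_i$; that is, $e_i=1$. So no purely local computation can deliver ``$e_i$ even''; the parity hinges on whether the tangent cone of $F$ at $p_i$ divides that of $G$, which your hypotheses do not control.

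The paper closes this step differently: following Catanese \cite{Cat81}, it uses the exact identity $\pi^*(2C)=2C'+\sum_i\alpha_iA_i$, i.e.\ $e_i=0$, which holds precisely when the strict transform of $G$ does not contain the exceptional conic over $p_i$ (equivalently, the quadric tangent cone of $F$ does not divide the tangent cone of $G$ there). This is automatic in the configurations where the proposition is applied--for instance in the $Y_{\mathrm{HPT}}$ example $G$ is a union of planes, so its tangent cone at a node is a product of linear forms and can never be divisible by the irreducible rank-$3$ quadric. To repair your argument you must either prove $e_i=0$ in your setting (not merely ``$e_i$ even'') or impose such a tangent-cone condition explicitly; as written, the decisive parity claim is both unproved and, as a local statement, false.
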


\begin{proof}
The double cover of $\P^3$ is defined by adjoining a square root of
$T:=G/X_0^{\deg G}$ to the function field $k(\P^3) = k(X_1/X_0,
X_2/X_0 , X_3/X_0)$. Let $t \in k(F)$ be the restriction of $T$ to
$F$. We claim that $t$ viewed as an element of
\[
H^1 (k(F), \Z/2) = k(F)^{\times} / k(F)^{\times 2}
\]
is unramified with respect to every divisorial valuation $\nu$ of $k(F)$. Since we assumed that $H^1_{\mathrm{nr}} (k (F), \Z/2) =0$, this will imply that $t$ is a square, and the cover of $F$ determined by $t$ splits. By Proposition \ref{pPurityResult} we only have to check $\nu$'s corresponding to irreducible curves on a smooth model $\pi \colon \tilde{F}\to F$ where we have blown up all nodes $p_i$ to $(-2)$ curves $A_i$. Then the claim follows since
\[
\pi^{\ast} (2C) \equiv 2C' + \sum_i \alpha_i A_i
\]
where $C'$ is the strict transform of $C$ on $\tilde{F}$. See also \cite[proof of Prop. 2.6]{Cat81}. 
\end{proof}

\begin{remark}\label{rNonCartier}
If $G$, $F$ meet all the requirements of Proposition \ref{pSplitting} except that some $\alpha_i$ is not even, say $\alpha_i=1$ so that $G$ is smooth at $p_i$, then the cover of $F$ will \emph{not} split since $t$ will vanish to order $1$ along $A_i$ in that case. In particular, the intersection curve $C$ cannot locally analytically look like one line of a ruling in a cone at a node $p_i$ if we want the splitting.
\end{remark}

\begin{remark}\label{rNicestSituation}
In the nicest situation, the hypotheses of Proposition
\ref{pSplitting} will be satisfied in such a way that at a node $p$,
$C$ locally analytically looks like two lines of the ruling of a cone.
\end{remark}

\begin{example}\label{eHPT}
We will now analyze the example in \cite{HPT16}, which is a divisor
$Y_{\mathrm{HTP}}$ of bi-degree $(2,2)$ in $\P^2\times \P^3$, in light
of Proposition \ref{pSplitting}.  In \cite{HPT16}, the authors used
the structure of $Y_{\mathrm{HPT}}$ as a quadric surface fibration
over $\P^2$, given by the projection onto the first factor. We will
use its conic bundle structure over $\P^3$ given by projection onto
the second factor.  More precisely, $Y_{\mathrm{HPT}}$ is defined by
\begin{gather}\label{fHPTConic}
YZ\, S^2 + XZ\, T^2 + XY\, U^2 + (X^2+Y^2+Z^2-2(XY + XZ + YZ))\, V^2 = 0,
\end{gather}
where we denote homogeneous coordinates $(S:T:U:V)$ in $\P^3$ and $(X:Y:Z)$ in $\P^2$.

This conic bundle over $\P^3$ is defined, after rescaling the
coordinate $V \mapsto \sqrt{2}V$, by the graded matrix (up to a scalar
multiple)
\begin{gather}\label{fAsherMatrix}
\begin{pmatrix}
V^2 & U^2-V^2 & T^2-V^2 \\
U^2-V^2 & V^2 & S^2-V^2 \\
T^2-V^2 & S^2-V^2 & V^2 
\end{pmatrix}.
\end{gather}
The discriminant is a sextic surface $D \subset \P^3$ defined by the
determinant
\begin{gather}\label{fDiscriminantHPT}
4V^6 - 4(S^2+T^2+U^2) V^4 + (S^2+T^2+U^2)^2 V^2 - 2S^2T^2U^2 = 0
\end{gather}
which has two irreducible cubic surfaces as components $D_\pm$,
defined by
\begin{gather}\label{fDiscCompHPT}
2V^3 - V(S^2+T^2+U^2) \pm \sqrt{2}STU=0.
\end{gather}

Each component $D_\pm$ has four nodes and no other singular points,
hence up to projective equivalence, is isomorphic to the Cayley nodal
cubic surface.  In fact, given their equations, the surfaces $D_\pm$
are in the family of tetrahedral Goursat surfaces \cite{goursat},
which constitute one of the standard forms for the Cayley nodal cubic.
The nodes of the component $D_\pm$ are at the points
\begin{gather}\label{fNodesHPT}
\textstyle
(1:1:1:\pm\frac{1}{\sqrt{2}}), 
(1:-1:-1:\pm\frac{1}{\sqrt{2}}), 
(-1:1:-1:\pm\frac{1}{\sqrt{2}}), 
(-1:-1:1:\pm\frac{1}{\sqrt{2}}). 
\end{gather}
Over each node of the component $D_\pm$, the quadratic form $q$ has
rank 1. The only other points where the rank of $q$ drops to $1$ are
the six points
\begin{gather}\label{fAdditionalPoints}
\Sigma:= \{ 
(\pm\sqrt{2}: 0 :0 :1), 
(0:\pm\sqrt{2} : 0: 1), 
(0:0:\pm\sqrt{2} :1)\}.\nonumber
\end{gather}
Away from these $14$ points, $q$ has rank 2 on $D$.

The components of the discriminant meet in a curve $D_+ \cap D_-$,
which is a strict normal crossings curve of degree 9 in $\P^3$,
composed of an arrangement of 3 conics and 3 lines as in
Figure~\ref{fig:arrangement}.
\begin{figure}
\centering
    \begin{tikzpicture}[scale=1.8]
    \path[use as bounding box, color=white, draw] (1,{1/sqrt(3)}) +(-1.45,-1.3) rectangle +(1.45,1.55);
    \begin{pgfinterruptboundingbox}
    \clip (1,{1/sqrt(3)}) circle (1.5);
    \end{pgfinterruptboundingbox}
    \coordinate (A) at (-1,0);
    \coordinate (AA) at (3,0);
    \coordinate (B) at ({-1/2},{-sqrt(3)/2});
    \coordinate (BB) at ({3/2},{3*sqrt(3)/2});
    \coordinate (C) at (1/2,{3*sqrt(3)/2});
    \coordinate (CC) at (5/2,{-{sqrt(3)}/2});
    \draw[very thick, color=red, name path=lineA] (A)--(AA);
    \draw[very thick, color=red, name path=lineB] (B)--(BB);
    \draw[very thick, color=red, name path=lineC] (C)--(CC);
    \draw[very thick, color=blue, name path=circleA] (1,0) circle (2/3);
    \draw[very thick, color=blue, name path=circleB] (1/2,{sqrt(3)/2}) circle (2/3);
    \draw[very thick, color=blue, name path=circleC] (3/2,{sqrt(3)/2}) circle (2/3);
    \fill[color=red] (0,0) circle (1pt);
    \fill[color=red] (2,0) circle (1pt);
    \fill[color=red] (1,{sqrt(3)}) circle (1pt);
    \path [name intersections={of=circleA and lineA,name=lcA}];
    \fill[color=magenta] (lcA-1) circle (1pt);
    \fill[color=magenta] (lcA-2) circle (1pt);

    \path [name intersections={of=circleA and lineA,name=lcA}];
    \fill[color=magenta] (lcA-1) circle (1pt);
    \fill[color=magenta] (lcA-2) circle (1pt);

    \path [name intersections={of=circleB and lineB,name=lcB}];
    \fill[color=magenta] (lcB-1) circle (1pt);
    \fill[color=magenta] (lcB-2) circle (1pt);

    \path [name intersections={of=circleC and lineC,name=lcC}];
    \fill[color=magenta] (lcC-1) circle (1pt);
    \fill[color=magenta] (lcC-2) circle (1pt);
    \path [name intersections={of=circleA and circleB,name=ccAB}];
    \fill[color=blue] (ccAB-1) circle (1pt);
    \fill[color=blue] (ccAB-2) circle (1pt);

    \path [name intersections={of=circleB and circleC,name=ccBC}];
    \fill[color=blue] (ccBC-1) circle (1pt);
    \fill[color=blue] (ccBC-2) circle (1pt);

    \path [name intersections={of=circleA and circleC,name=ccAC}];
    \fill[color=blue] (ccAC-1) circle (1pt);
    \fill[color=blue] (ccAC-2) circle (1pt);    
    \end{tikzpicture}
\caption{The arrangement of components of the intersection of
irreducible components $D_+ \cap D_-$ of the discriminant of the conic
bundle associated to the example $Y_{\mathrm{HPT}}$ in \cite{HPT16}.}
\label{fig:arrangement}
\end{figure}
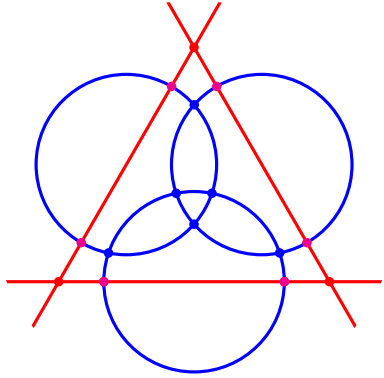
The equations of the components of $D_+ \cap D_-$ are:
\begin{align}\label{fIntersectionsUpstairs}
 \tilde{M}_1\colon ( U=S^2 + T^2 - 2V^2=0) \qquad {}& \qquad \tilde{L}_1\colon (U=V=0) \\\nonumber
   \tilde{M}_2\colon (T=S^2 + U^2 - 2V^2=0) \qquad {}& \qquad \tilde{L}_2 \colon (T=V=0) \\\nonumber
 \tilde{M}_3\colon  ( S=T^2 + U^2 - 2V^2=0) \qquad {}& \qquad \tilde{L}_3 \colon (S=V=0)\nonumber
\end{align}
Each two of the three conics intersect in two points, and the resulting set of six points coincides with $\Sigma$. 

Although we will verify it more easily in our geometric discussion
below, placing this example in the context of
Proposition~\ref{pSplitting}, the algebraically inclined reader can
verify already at this stage that
Theorem~\ref{tAuelPirutkaAlgebraicVersion} applies to
$Y_{\mathrm{HPT}}$, as follows.

By taking successive quotients of increasing minors, we can
diagonalize the quadratic form $q$ over $k(\P^3)$ (though still using
homogeneous coordinates) as
$$
q \sim \langle V^2, (-U^2+2U^2V^2)/V^2, D/(-U^2+2U^2V^2) \rangle
$$ 
where by abuse of notation, $D$ denotes the homogeneous equation for
the discriminant.  Hence, we have
$$
\alpha = (U^2-2V^2,D)
$$
in $\Br k(\P^3)$.  Hence over the generic point of each component
$D_\pm$ of $D$, we have residue $\gamma_\pm = \partial_{D\pm} \alpha =
(U^2-2V^2)$.  We know that each residue $\gamma_\pm$ is nontrivial.
Indeed, one verifies that $\gamma_\pm$ ramifies along valuations that
are centered at the isolated singular points of $D_\pm$, i.e., along
the exceptional divisors of a minimal resolution of $D_\pm$.

It is easy, but cumbersome, to check that $\gamma_\pm$ has no further
residues along components of $D_+ \cap D_-$ (which follows from the
fact that the quadratic form $q$ has rank 2 generically over each
component of $D_+ \cap D_-$) and that for each component $C$ of $D_+
\cap D_-$, the residue class is a square in the residue field $k(C)$.
Hence, Theorem~\ref{tAuelPirutkaAlgebraicVersion} gives that
$Y_{\mathrm{HPT}}$ has unramified Brauer group $\Z / 2\Z$.
\end{example}

\medskip

We now analyze the conic bundle $Y_{\mathrm{HPT}}$ in a more geometric
way, establishing the connection to Proposition~\ref{pSplitting}.

The first observation is that if we take another copy of $\P^3$ with
coordinates $X_0, X_1, X_2, X_3$ and consider the matrix
\begin{gather}\label{fMatrixCayleyCubic}
M = \begin{pmatrix}
X_0 & X_1 & X_2 \\
X_1 & X_0 & X_3\\
X_2 & X_3 & X_0
\end{pmatrix}
\end{gather}
then $M$ defines a linear determinantal conic bundle over that $\P^3$ with discriminant $\det M$ a Cayley cubic $F$ with nodes at
\[
\nu_0 = (1:1:1:1) , \; \nu_1 = (1:-1:-1:1) , \; \nu_2 = (1: 1:-1:-1), \; \nu_3= (1:-1:1:-1). 
\]
The conic bundle given by the matrix \eqref{fAsherMatrix} is the pull-back of this linear determinantal conic bundle via the degree $8$ cover
\begin{align}\label{fCoverConicBundle}
\varphi\colon \P^3_{(S:T:U:V)} &\to \P^3_{(X_0:X_1:X_2:X_3)} \\ 
 (S:T:U:V) & \mapsto (X_0:X_1:X_2:X_3) = ( V^2: U^2-V^2: T^2-V^2: S^2-V^2). \nonumber
\end{align}

The branch locus of this cover is given by a tetrahedron of planes in $\P^3$ given by 
\begin{align}\label{fBranchComponents}
G_0 &= \{ X_0=0\} \\\nonumber
G_1&= \{ X_0+X_1=0\}\\\nonumber
G_2 &=\{ X_0+X_2=0\}\\\nonumber
G_3 &=\{ X_0+X_3=0\}. \nonumber
\end{align}
We write $G= \bigcup_i G_i$. Let us give names to six lines on the Cayley cubic $F$

\begin{align}\label{fLinesOnCayley}
M_1&= \{ X_0+X_1=0,\, X_2+X_3=0\}\\\nonumber
M_2&= \{ X_0+X_2=0,\, X_1+X_3=0\} \\\nonumber
M_3&= \{ X_0+X_3=0,\, X_2+X_1=0\} \\\nonumber
& \\\nonumber
L_1&=\{ X_0=X_1=0\} \\\nonumber
L_2&=\{ X_0=X_2=0\} \\\nonumber
L_3&=\{ X_0=X_3=0\} \nonumber
\end{align}
and write

\[
L = \bigcup_i L_i, \quad M= \bigcup_j M_j. 
\]

Then $L$ and $M$ are two triangles of lines in $F$ that are ``circumscribed around each other", in the sense that $L_i$ meets $M_i$ in a point different from the vertices of $M$, and $L_i$ does not meet $M_j$ for $i\neq j$. Moreover, the nodes $\nu_1, \nu_2, \nu_3$ form the vertices of the triangle $M$. We have the following scheme-theoretic intersections
\begin{align}\label{fIntersections}
G_0 \cap F &= L \\\nonumber
G_i\cap F &= 2M_i + L_i, \; i=1,2,3\\\nonumber
G \cap F &= 2L + 2M\nonumber
\end{align}

So the $G_i$, $i=1,2,3$, are tangent to $F$ in $M_i$, and $G$ itself is singular along $L$, $G_i\cap G_0=L_i$, $i=1,2,3$. Note that the curve $C: =L+M$ is Cartier everywhere, even at the nodes. The node $\nu_0=(1:1:1:1)$ is not in $G$ at all. 

\medskip

In other words, $F$, $G$, and $C$ verify all the hypotheses of Proposition \ref{pSplitting}! The eight to one cover $\varphi$ in (\ref{fCoverConicBundle}) factors into a double cover to which Proposition \ref{pSplitting} applies, and a residual four to one cover. This explains the splitting of the discriminant conceptually for the example $Y_{\mathrm{HPT}}$. 

The eight singular points of $D_+$ and $D_{-}$ (both Cayley cubics) are the preimages under $\varphi$ of $\nu_0$. In fact, the cover is \'{e}tale locally above $\nu_0$. The following formulas hold for the (reduced, set-theoretic) preimages:

\begin{align}\label{fPreimages}
\varphi^{-1} (L_i) & = \tilde{L}_i\\\nonumber
\varphi^{-1} (M_i) & = \tilde{M}_i. \nonumber
\end{align} 

We have 
\[
\varphi^{-1}(\{ \nu_1, \nu_2, \nu_3\}) = \Sigma .
\]

Let us now verify that the double covers of the curves $\tilde{L}_i$ and $\tilde{M}_j$ induced by the conic bundle given by (\ref{fAsherMatrix}) decompose into two components. Indeed, look at the double covers of the $L_i$ induced by the conic bundle given by (\ref{fMatrixCayleyCubic}) first. Then these already split into two components, as is easy to see. For example, taking the line $L_1$ with homogeneous coordinates $X_2, X_3$, and fiber coordinates $(a:b:c)$ in the trivial $\P^2$ bundle that the conic bundle given by (\ref{fAsherMatrix}) naturally embeds into, the preimage of $L_1$ decomposes as
\[
c=0, \quad X_2a+X_3b=0.
\]
Similarly for $L_2, L_3$. So also the double covers of the curves $\tilde{L}_i$ decompose. The double covers of the curves $M_j$ on the contrary are irreducible conics $M_j^{\sharp}$, the covers $M_j^{\sharp} \to M_j$ being branched in the two nodes of $F$ lying on $M_j$.  However, if we pull-back the cover $M_j^{\sharp} \to M_j$ via the cover $\tilde{M}_j \to M_j$, then it becomes reducible (since $\tilde{M}_j$ is square isomorphic to $M_j^{\sharp}$ over $M_j$). 
So all the hypotheses of Corollary \ref{cAuelPirutkaMoreGeometric}, including the ``splitting condition" for the curves arising as irreducible components of some $S_i\cap S_j$, are verified. So we see again that the unramified Brauer group of $Y_{\mathrm{HPT}}$ is equal to $\Z/2\Z$. 

\medskip

In \cite{HPT16}, the authors show that $Y_{\mathrm{HPT}}$ has a Chow universally trivial resolution of singularities, by an explicit computation. The results of Section \ref{sSingularities} give a new streamlined proof of this result. Using \cite{Voi15} and \cite{CT-P16}, one obtains that the very general divisor of bi-degree $(2,2)$ in $\P^2\times \P^3$ is not stably rational. On the other hand, some such hypersurfaces, even smooth ones, are shown to be rational in \cite{HPT16}.

\begin{remark}\label{rFailure}
The difficulty in using this approach, or, more precisely,
Proposition~\ref{pSplitting}, for the construction of new examples to
which Theorem \ref{tAuelPirutkaAlgebraicVersion} applies is that the
double cover $B$ of $\P^3$ branched in $G$ is usually both nonrational
and has nontrivial $\Het^3(B, \Z /2)$. In cases where $B$ is
at least unirational, one can pull back further to a rational $B'$
dominating $B$, but also this will usually have $\Het^3(B',
\Z /2)$ nontrivial.
\end{remark}


\section{Reducibility of the discriminant: 2nd method}\label{sCatanese}


There is another construction of conic bundles to which Corollary \ref{cAuelPirutkaMoreGeometric} potentially applies, again using the theory of contact of surfaces. The advantage of this method is that it works over the base $B=\P^3$ and that it produces conic bundles of graded-free types with reducible discriminant surfaces directly, and such that the conics will generically be two distinct lines over the intersections of discriminant components. The hard condition that one then still has to ensure somehow (e.g., by adjusting the free parameters in the construction) is the splitting condition on the covers of the curves that make up the irreducible components of the intersection of two discriminant surfaces. But also this can be translated entirely into projective geometry of the configuration, and we will deal with it at the end of this Section.

\medskip

\begin{proposition}\label{pCombine}
Consider symmetric matrices over $\P^3$
\[
	A = 
	\begin{pmatrix}
		a_{0,0} & a_{0,1} & a_{0,2} \\
		a_{0,1} & a_{1,1} & a_{1,2} \\
		a_{0,2} & a_{1,2} & a_{2,2} \\
	\end{pmatrix},
	\quad\quad
	B = 
	\begin{pmatrix}
		b & c \\
		c & d
	\end{pmatrix}
\]
defining symmetric maps between graded-free vector bundles. Let 
\[
	N =
	\begin{pmatrix}
		c^2a_{0,0} - b \det A & ca_{0,1} & ca_{0,2} \\
		ca_{0,1} & a_{1,1} &  a_{1,2} \\
		ca_{0,2} & a_{1,2} & a_{2,2}   \\
	\end{pmatrix}.
\]
If in this situation 
\[
	d = \det 	\begin{pmatrix}
		a_{1,1} & a_{1,2} \\
		a_{1,2} & a_{2,2} \\
	\end{pmatrix}
\]
then $N$ also gives a symmetric map between graded-free vector bundles and 
\[
	\det N = -(\det A)(\det B).
\]
\end{proposition}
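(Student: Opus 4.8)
The plan is to dispatch the two assertions separately: the claim that $N$ is graded-symmetric is pure degree bookkeeping (note that Proposition~\ref{pCombine} does \emph{not} assert the non-vanishing of the entries of $N$, only that the grading closes up), whereas the determinant identity is the one computation with content; it will turn out that the single hypothesis $d=\det\left(\begin{smallmatrix} a_{1,1}&a_{1,2}\\ a_{1,2}&a_{2,2}\end{smallmatrix}\right)$ is what makes both parts work. For the grading, since $A$ defines a symmetric map of graded-free bundles there are rational numbers $\alpha_0,\alpha_1,\alpha_2$ with pairwise integral differences and pairwise integral sums such that $\deg a_{i,j}=\alpha_i+\alpha_j$, and likewise $\deg b=2\beta_0$, $\deg c=\beta_0+\beta_1$, $\deg d=2\beta_1$ for $B$. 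Comparing degrees in the hypothesis gives $2\beta_1=2\alpha_1+2\alpha_2$, i.e. $\beta_1=\alpha_1+\alpha_2$. I would then check directly that, with $\gamma_1:=\alpha_1$, $\gamma_2:=\alpha_2$, and $\gamma_0:=\beta_0+\alpha_0+\alpha_1+\alpha_2$, every entry $N_{i,j}$ is homogeneous of degree $\gamma_i+\gamma_j$ and the $\gamma_i$ again have pairwise integral differences; the only case that is not immediate is the corner, where $\deg(c^2 a_{0,0})=2(\beta_0+\beta_1)+2\alpha_0$ and $\deg(b\det A)=2\beta_0+2(\alpha_0+\alpha_1+\alpha_2)$ agree precisely because $\beta_1=\alpha_1+\alpha_2$. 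Since $N$ is visibly symmetric (as the entries of $A$ are), this shows $N$ defines a symmetric map of graded-free bundles.

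For $\det N=-(\det A)(\det B)$ I would avoid expanding the $3\times 3$ determinant from scratch and instead view $N$ as obtained from $A$ by two elementary moves. First, multiplying the first row and the first column of $A$ by $c$ produces a matrix with corner $c^2 a_{0,0}$, first-row/column off-diagonal entries $c a_{0,1}, c a_{0,2}$, and lower-right $2\times 2$ block unchanged; this multiplies the determinant by $c^2$. Second, $N$ differs from that matrix only in the $(0,0)$ slot, where $b\det A$ has been subtracted; since the determinant is linear in that entry, this subtracts $b\det A$ times the $(0,0)$-cofactor of that matrix, and that cofactor is untouched by the row/column scaling, so it equals $\det\left(\begin{smallmatrix} a_{1,1}&a_{1,2}\\ a_{1,2}&a_{2,2}\end{smallmatrix}\right)$. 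Hence
\[
\det N \;=\; c^2\det A \;-\; b\,\det A\cdot\det\begin{pmatrix} a_{1,1}&a_{1,2}\\ a_{1,2}&a_{2,2}\end{pmatrix} \;=\; (\det A)\left(c^2 - b\,\det\begin{pmatrix} a_{1,1}&a_{1,2}\\ a_{1,2}&a_{2,2}\end{pmatrix}\right).
\]
This is exactly where the hypothesis is used: replacing $\det\left(\begin{smallmatrix} a_{1,1}&a_{1,2}\\ a_{1,2}&a_{2,2}\end{smallmatrix}\right)$ by $d$ turns the last factor into $c^2-bd=-(bd-c^2)=-\det B$, giving $\det N=-(\det A)(\det B)$.

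There is no serious obstacle here. The two points deserving care are: that scaling the first row and column of $A$ leaves the $(0,0)$-cofactor untouched (so the cofactor is genuinely $\det\left(\begin{smallmatrix} a_{1,1}&a_{1,2}\\ a_{1,2}&a_{2,2}\end{smallmatrix}\right)$, not a $c^2$-multiple of it); and the pleasant observation that the sole hypothesis $d=\det\left(\begin{smallmatrix} a_{1,1}&a_{1,2}\\ a_{1,2}&a_{2,2}\end{smallmatrix}\right)$ simultaneously balances the degrees of the two summands in the corner entry of $N$ and licenses the identification of $c^2-bd$ with $-\det B$, so that the graded-free assertion and the determinant identity are really governed by the same compatibility.
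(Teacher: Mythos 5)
Your proposal is correct and follows essentially the same route as the paper: the paper's proof likewise reduces the graded-free claim to the degree balance $2\deg(c)+\deg(a_{0,0})=\deg(b)+\deg(d)+\deg(a_{0,0})=\deg(b)+\deg(\det A)$ (your $\beta_1=\alpha_1+\alpha_2$ in more systematic notation) and then simply evaluates $\det N$ and compares. Your organization of that evaluation---scale the first row and column of $A$ by $c$, then use linearity of the determinant in the $(0,0)$-entry together with the hypothesis $d=a_{1,1}a_{2,2}-a_{1,2}^2$---is just a tidier way of carrying out the same computation, and it is correct.
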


\begin{proof}
First notice that 
\begin{align*}
2\deg (c) + \deg (a_{0,0}) &= \deg (b) + \deg (d) + \deg (a_{0,0})\\
   & = \deg (b) + \deg (a_{1,1} ) + \deg (a_{2,2}) + \deg (a_{0,0}) \\
   &= \deg (b) + \deg (\det (A)). 
\end{align*}
Then evaluate $\det N$ and compare.
\end{proof}

\begin{remark}
For the interested reader we sketch how the above construction was found. 
Even though the concepts are not used in the proof, this construction relies on matrix factorizations and Catanese's theory of contact of surfaces \cite{Cat81}: 

The minimal free resolution of a coherent sheaf on a hypersurface $X =
\{f=0\} \subset \P^n$ over the coordinate ring of $X$ becomes periodic
after a finite number of steps. If the sheaf is arithmetically
Cohen--Macaulay (ACM) with support equal to $X$, the resolution is
periodic. The differentials are given by square matrices $P$ resp.\
$Q$ corresponding to maps from $F$ to $G$ resp.\ $G$ to $F$ for some
graded free modules $F$ and $G$, with $PQ = f\id_G$ and $QP=f
\id_F$. Furthermore the determinants of $P$ and $Q$ vanish on $X$. The
pair $(P,Q)$ with the above properties is called a {\sl matrix
factorization} of $f$ \cite[Thm. 6.1]{Ei80}.

Dolgachev \cite[Section 4.2]{Dol12} observes that one obtains
symmetric matrices in this way if one starts with an arithmetically
Cohen--Macaulay symmetric sheaf.  So our problem of finding a
symmetric matrix with given reducible determinant $X$ can be reduced
to finding an appropriate sheaf on $X$.

On the other hand, Catanese observed that for a symmetric graded $n
\times n$ matrix each diagonal $(n-1) \times (n-1)$ minor defines a
contact surface to the determinant of the matrix. Furthermore the
square root of the contact curve is defined by the $(n-1) \times
(n-1)$ minors of the $(n-1) \times n$ matrix obtained by deleting the
line that is not involved in the minor defining the contact
surface. In our construction above
\[
d = \det 	\begin{pmatrix}
		a_{1,1} & a_{1,2} \\
		a_{1,2} & a_{2,2} \\
	\end{pmatrix}
\]
is a contact surface to both $\det A$ and $\det B$. The contact curves are defined by the $2 \times 2$ minors of
\[
	\begin{pmatrix}
		a_{0,1} & a_{1,1} & a_{1,2} \\
		a_{0,2} & a_{1,2} & a_{2,2} \\
	\end{pmatrix},
\]
and the $1 \times 1$ minors of
\[
	\begin{pmatrix}
		d & c \\
	\end{pmatrix}.
\]
The ideal sheaves of these curves are ACM (since they are determinantal) and symmetric (since they are contact curves).

Notice now that $d$ is also contact to $(\det A)(\det B)$. Furthermore the contact curve is the union of the two contact curves above.
If this union is also $ACM$ we can obtain a symmetric matrix $N$ whose determinant vanishes on $(\det A)(\det B)$ via matrix factorization. 

In our case the union of the curves is defined by
\[
	\begin{pmatrix}
		ca_{0,1} & a_{1,1} &  a_{1,2} \\
		ca_{0,2} & a_{1,2} & a_{2,2}   \\
	\end{pmatrix}
\]
Indeed, if $c$ is nonzero, we obtain the equations of the first
curve. If $c=0$ two of the minors vanish automatically and the third
is just $d$. So we obtain $d=c=0$ as the second component. This shows
that the union of contact curves is again ACM and we obtain the above
formula via matrix factorization.

In a certain sense this is a generalization of the construction of
Artin and Mumford in \cite{AM72} to $\P^3$.
\end{remark}

Note that $N$ defines a conic bundle of graded-free type if the rank of $N$ is never zero in a point of $\P^3$. 

\begin{remark} \label{rConicBundlesSame} Notice that if in the above
construction $A$, $B$ and $N$ define conic bundles, then the
restriction of the conic bundle defined by $N$ to $\det A$ is
birationally the same as the one defined by $A$.
\end{remark}

\begin{remark}
In order to apply our Theorem \ref{tAuelPirutkaAlgebraicVersion}, or
rather Corollary \ref{cAuelPirutkaMoreGeometric}, to the situation
above we must find $A$ and $B$ such that
\begin{enumerate}
\item $\det A$ and $\det B$ are irreducible (this is an open condition)
\item $\det A$ and $\det B$ are smooth in the intersection curve
$\barD = \{\det A = \det B = 0\}$ (this is an open condition)
\item the double cover of $\det A$ and $\det B$ induced by $N$ is non trivial (this is also an open condition)
\item
$N$ has rank two generically on each component of $\barD$. 
\item the double cover of the intersection curve $\barD$ induced by $N$ is trivial (this is a closed condition)
\end{enumerate}
The hard part here is the last condition. In the next section we will
show how one can satisfy this closed condition via an appropriate
construction.  The open conditions will then be checked by a computer
program for a one example.
\end{remark}


\section{Triviality of the conic bundle on the intersection curve}\label{sIntersectionCurve}


In order to apply Theorem \ref{tAuelPirutkaAlgebraicVersion} to the
situation of Section~\ref{sCatanese} we must ensure that the double
cover of the intersection curve $\barD= \{\det A=\det B=0\}$ induced
by $N$ is trivial. For this we have the following geometric
(sufficient) condition:

\begin{proposition} \label{pRationalComponents}
In the notation of Proposition~\ref{pCombine} let 
$$\barD = \{\det A=\det B=0\} \subset \P^3$$ 
be the intersection curve of the two discriminant components. If all
components of $\barD$ are rational and do not intersect the rank $1$
locus of $A$, and, moreover, $N$ has rank $2$ generically on each
component of $\barD$, then the double cover of each component of
$\barD$ induced by $N$ is trivial.
\end{proposition}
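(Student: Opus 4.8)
The plan is to show that the class in $H^1(k(\barD_\ell),\Z/2)$ defining the double cover over each irreducible component $\barD_\ell$ of $\barD$ is unramified, and then invoke rationality to conclude it is trivial. First I would note that, by Remark~\ref{rConicBundlesSame}, the restriction of the conic bundle defined by $N$ to $\det A = 0$ is birational over $\det A$ to the one defined by $A$; hence the induced double cover of $\barD_\ell$ is the one cut out by the residue class $\gamma = \partial^2_{\det A}(\alpha_A) \in H^1(k(\det A),\Z/2)$ restricted to $\barD_\ell$, where $\alpha_A$ is the Brauer class of the generic fiber of the conic bundle given by $A$. Diagonalizing $A$ over $k(\P^3)$ as in Example~\ref{eHPT} (successive ratios of leading principal minors), one gets $\alpha_A = (a_{0,0},\, \delta)$ for a suitable representative, where $\delta = \det A$, so $\gamma$ is represented by the leading-minor quotient $a_{0,0}\cdot(\text{minor}_2/a_{0,0})$ as an element of $k(\det A)^\times/k(\det A)^{\times 2}$, well-defined away from the rank-$1$ locus of $A$.

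The key step is then a residue computation along $\barD_\ell$. The only way $\gamma|_{\barD_\ell}$ could fail to be unramified at a point $p \in \barD_\ell$ is if $p$ lies on the zero or polar locus of the minor-quotient representing $\gamma$ inside $\det A$, which meets $\det A$ precisely along curves contained in the degeneracy loci of the intermediate minors of $A$; by hypothesis $\barD_\ell$ avoids the rank-$1$ locus of $A$, so along $\barD_\ell$ the quadric of $A$ has rank exactly $2$, the representative of $\gamma$ is a nonvanishing unit generically, and $\gamma|_{\barD_\ell}$ extends to a genuine class in $H^1(k(\barD_\ell),\Z/2)$. Because $N$ (equivalently, after passing to the birational model, the conic of $A$) has rank $2$ generically along $\barD_\ell$, the double cover $\wt{\barD_\ell} \to \barD_\ell$ is a genuine étale-outside-finitely-many-points double cover, corresponding to a well-defined element $\eta \in H^1(k(\barD_\ell),\Z/2) = k(\barD_\ell)^\times/k(\barD_\ell)^{\times 2}$, and one checks (using Proposition~\ref{pPurityResult} applied to the smooth locus of a resolution of $\barD_\ell$, as in the proof of Proposition~\ref{pSplitting}) that $\eta$ is unramified with respect to every divisorial valuation of $k(\barD_\ell)$.

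Finally, since $\barD_\ell$ is rational, $H^1_{\mathrm{nr}}(k(\barD_\ell)/k,\Z/2) = H^1_{\et}(\P^1,\Z/2) = 0$ — a smooth projective rational curve over an algebraically closed field has trivial unramified $H^1$ — so the unramified class $\eta$ vanishes, i.e.\ $\eta$ is a square, which means the double cover splits: it is trivial. I expect the main obstacle to be the bookkeeping in the second step, namely verifying cleanly that avoiding the rank-$1$ locus of $A$ together with generic rank $2$ of $N$ along $\barD_\ell$ is exactly what is needed to guarantee that $\gamma|_{\barD_\ell}$ has no residues — one must be careful that the intermediate-minor loci appearing in the diagonalization do not introduce spurious ramification along $\barD_\ell$, and that passing through the birational identification of Remark~\ref{rConicBundlesSame} does not alter the unramified class.
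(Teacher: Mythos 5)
Your outline follows the paper's own route: Remark~\ref{rConicBundlesSame} to replace the cover induced by $N$ by the one induced by $A$, the avoidance of the rank~$1$ locus of $A$ as the source of unramifiedness, and rationality of the components to kill the class. But the middle step, which you yourself flag as the obstacle, is exactly where your write-up falls short of a proof. What you actually justify is that the cover is \'etale outside finitely many points of a component $\barD_\ell$ and that a chosen minor representative of the class is a unit \emph{generically}; neither statement suffices, since a double cover of $\P^1$ branched at two points is nontrivial even though it is unramified at all but two valuations. The point you need --- and which the hypothesis hands you directly --- is that $A$ has rank exactly $2$ at \emph{every} point of $\barD_\ell$, not just generically: $\barD_\ell$ lies in $\{\det A=0\}$, so the rank is at most $2$, and it avoids the rank~$1$ locus. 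Hence at every point of $\barD_\ell$ the fiber of the conic bundle of $A$ is a union of two distinct lines, so the induced double cover is \'etale over all of $\barD_\ell$; pulling it back to the normalization $\P^1$ of the rational component gives a finite \'etale double cover of $\P^1$, which splits, so the class in $H^1(k(\barD_\ell),\Z/2)$ vanishes and the cover is trivial in the sense needed for Corollary~\ref{cAuelPirutkaMoreGeometric}. This is precisely the paper's three-sentence proof.

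Once this is in place, the detour through diagonalizing $A$ and worrying about ``intermediate minor loci'' is unnecessary and somewhat misleading: the residues of the intrinsic class do not depend on the choice of representative, so the accidental vanishing of a particular leading minor along $\barD_\ell$ cannot create ramification, and no residue computation is needed at all --- nor is Proposition~\ref{pPurityResult}, which for a curve says nothing beyond checking the points of the smooth projective model. If you want to keep your $H^1_{\mathrm{nr}}$ phrasing, the fix is simply to replace ``\'etale outside finitely many points'' by ``\'etale everywhere over $\barD_\ell$, because the rank is exactly $2$ at every point,'' after which unramifiedness at every point of the normalization is immediate and the conclusion follows from $H^1_{\et}(\P^1,\Z/2)=0$.
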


\begin{proof}
By Remark \ref{rConicBundlesSame} the double cover of $\barD$ induced
by $N$ is birationally the same as the one induced by $A$. Since
$\barD$ does not intersect the rank $1$ locus of $A$ this double cover
is \'etale. Since there are no nontrivial \'etale double covers of
$\P^1$ and $\barD$ consists of rational components, the double cover
induced by $A$, and with it the one induced by $N$, is trivial.
\end{proof}

For the remainder of this Section we restrict to the case where all
$a_{i,j}$ are linear and $\det A$ is the Cayley cubic. We can change
coordinates, or, equivalently, find an invertible matrix $S$ such that
\[
	SAS^t = \begin{pmatrix}
		x_0 & x_1 & x_2 \\
		x_1 & x_0 & x_3 \\
		x_2 & x_3 & x_0
	      \end{pmatrix}
\]
For our construction we will use the fact that the Cayley cubic is rational:

\begin{proposition}
Let $L_1,\dots,L_4$ be $4$ general linear forms defining 4 general
lines in $\P^2$ intersecting in $6$ distinct points. Consider the
cubic polynomials
\[
	Y_i = \prod_{i\not=j} L_j
\]
and
\begin{align*}
	X_0 &=  -Y_0+Y_1+Y_2+Y_3\\
	X_1 &=  -Y_0-Y_1-Y_2+Y_3 \\
	X_2 &=   Y_0-Y_1+Y_2+Y_3 \\
	X_3 &=   Y_0+Y_1-Y_2+Y_3 \\
\end{align*}
Then the image of $\P^2$ under the rational map $\varphi \colon \P^2
\dashrightarrow \P^3$ defined by the linear system $|\langle
X_0,X_1,X_2,X_3 \rangle |$ is the Cayley cubic.
\end{proposition}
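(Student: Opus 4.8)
The plan is to work with the cubics $Y_i=\prod_{j\ne i}L_j$ throughout and translate to the $X_i$ only at the end. Since the $4\times4$ matrix expressing $(X_0,X_1,X_2,X_3)$ in terms of $(Y_0,Y_1,Y_2,Y_3)$ is invertible (a quick computation gives determinant $16$), the rational maps $(Y_0:Y_1:Y_2:Y_3)$ and $\varphi=(X_0:X_1:X_2:X_3)$ have the same image up to an automorphism of $\P^3$, so it suffices to identify the image of $\psi:=(Y_0:Y_1:Y_2:Y_3)$. First I would record the trivial identity $L_iY_i=L_0L_1L_2L_3$ for every $i$, which on the chart $Y_0\ne 0$ gives $Y_i/Y_0=L_0/L_i$ for $i=1,2,3$. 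Since $L_0,L_1,L_2$ are linearly independent, the map $(L_0/L_1,L_0/L_2)\colon\P^2\dashrightarrow\A^2$ is birational (in coordinates with $L_0=x_0,\ L_1=x_1,\ L_2=x_2$ it is $[1:x_1:x_2]\mapsto(1/x_1,1/x_2)$ on the chart $x_0=1$, whose inverse is $(u,v)\mapsto[1:1/u:1/v]$); adjoining the coordinate $L_0/L_3$ does not affect this, so $\psi$ is birational onto its image, which is therefore an irreducible rational surface in $\P^3$.

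Next I would produce the cubic equation. The four linear forms $L_0,\dots,L_3$ span the $3$-dimensional space of linear forms on $\P^2$, so there is a linear relation $\sum_{i=0}^3 c_iL_i=0$, unique up to scalar, and for four general lines all $c_i\ne 0$. Substituting $L_i=L_0L_1L_2L_3/Y_i$ into this relation and clearing denominators with $Y_0Y_1Y_2Y_3$ yields
\[
\sum_{i=0}^3 c_i\prod_{j\ne i}Y_j=0,
\]
so the image of $\psi$ is contained in the cubic surface $V=\bigl\{\sum_i c_i\prod_{j\ne i}z_j=0\bigr\}\subset\P^3$. Because all $c_i\ne 0$, the diagonal change of coordinates $z_i\mapsto c_iz_i$ turns $V$ into the standard Cayley cubic $\{z_1z_2z_3+z_0z_2z_3+z_0z_1z_3+z_0z_1z_2=0\}$ (each coefficient becomes $\prod_j c_j$); in particular $V$ is an irreducible cubic surface whose singularities are exactly four ordinary double points, the images of the coordinate points. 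Since the image of $\psi$ is an irreducible $2$-dimensional closed subvariety of the irreducible surface $V$, it equals $V$; hence the image of $\varphi$ is a four-nodal cubic surface, i.e.\ (projectively) the Cayley cubic. To obtain the precise normal form $\{\det M=0\}$ with $M=\bigl(\begin{smallmatrix}X_0&X_1&X_2\\X_1&X_0&X_3\\X_2&X_3&X_0\end{smallmatrix}\bigr)$, after rescaling the defining forms $L_i$ so that $L_0=L_1+L_2+L_3$ (i.e.\ $c=(1,-1,-1,-1)$) one substitutes the linear expressions for the $X_i$ into $\det M=X_0^3-X_0(X_1^2+X_2^2+X_3^2)+2X_1X_2X_3$ and checks the polynomial identity
\[
\det M = 16\,\bigl(Y_1Y_2Y_3-Y_0Y_2Y_3-Y_0Y_1Y_3-Y_0Y_1Y_2\bigr),
\]
whose right-hand side is $16\sum_i c_i\prod_{j\ne i}Y_j$ and therefore vanishes along the image by the cubic relation above.

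The argument has no deep step, but the point requiring care is to conclude that the image is all of $V$ rather than merely contained in it, which is why I would establish the birationality of $\psi$ first via the coordinates $Y_i/Y_0=L_0/L_i$. The only classical input is that a cubic of the shape $\sum_i c_i\prod_{j\ne i}z_j$ with all $c_i\ne 0$ is irreducible with exactly four ordinary double points and no other singularity, which can be read off the standard representative. Alternatively, one may phrase the whole proposition through the weak del Pezzo surface $\widetilde{\P}^2\to\P^2$ obtained by blowing up the six points $L_i\cap L_j$: the strict transforms $\widetilde{L_i}$ are four disjoint $(-2)$-curves, $|\langle Y_0,\dots,Y_3\rangle|$ is the full $|-K|$, and the anticanonical morphism contracts exactly the $\widetilde{L_i}$ to the four nodes, recovering the Cayley cubic; but the direct computation above is shorter.
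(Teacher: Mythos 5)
Your proof is correct, and it is more self-contained than the paper's, which consists of a single verification: substitute $x_i = X_i$ into the Cayley determinant $\det\bigl(\begin{smallmatrix}x_0&x_1&x_2\\x_1&x_0&x_3\\x_2&x_3&x_0\end{smallmatrix}\bigr)$ and check that it vanishes identically, the equality of the image with the whole cubic (rather than a curve inside it) and the normalization of the $L_i$ being left implicit, with the classical blow-up picture recalled only in the remark that follows. You arrive at the cubic equation by a different and arguably more conceptual route: the unique linear relation $\sum_i c_iL_i=0$ among the four lines, pushed through the identity $L_iY_i=L_0L_1L_2L_3$, gives the relation $\sum_i c_i\prod_{j\ne i}z_j=0$, i.e.\ the ``$\sum 1/z_i=0$'' form of the Cayley cubic, and the generality of the lines (no three concurrent) is exactly what guarantees all $c_i\ne 0$ and hence the four-nodal normal form. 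What your approach buys is twofold: the explicit birationality of $\psi$ via $Y_i/Y_0=L_0/L_i$ closes the surjectivity gap (image is a two-dimensional irreducible subvariety of an irreducible cubic, hence equal to it), and your final identity $\det M=16\bigl(Y_1Y_2Y_3-Y_0Y_2Y_3-Y_0Y_1Y_3-Y_0Y_1Y_2\bigr)$ makes transparent that the paper's substitution check only yields zero after normalizing the forms so that $L_0=L_1+L_2+L_3$ (equivalently, up to the diagonal rescaling of the $Y_i$, i.e.\ up to projective equivalence), a point the paper glosses over; what the paper's argument buys is brevity, since the determinant evaluation is a finite computation and the contraction of the four lines to the nodes is standard. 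Your closing alternative via the anticanonical map of the weak del Pezzo surface is precisely the geometric picture the paper records in the subsequent remark, so either phrasing fits the surrounding text.
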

	      
\begin{proof}
Set $x_i=X_i$ in $SAS^t$ then the evaluation of the determinant gives zero.
\end{proof}

\begin{remark}
Recall the following facts from classical algebraic geometry:
\begin{enumerate}
\item The Cayley cubic has $4$ nodes. They form the rank $1$ locus of $A$.
\item The four lines $L_1,\dots,L_4$ are contracted by $\varphi$. Their images are the $4$ nodes. 
\item The 6 base points are blown up and their images are $6$ lines in $\P^3$. These $6$ lines form a tetrahedron with the $4$ nodes as vertices.
\end{enumerate}
\end{remark}

\begin{notation}
Let $\sigma \colon \blowup \to \P^2$ be the blowup of $\P^2$ in the
$6$ base points above. With this we have the following diagram
\begin{center}
\mbox{
\xymatrix{
	& \blowup  \ar[dl]_{\sigma} \ar[dr]^{\pi}& & \\
	\P^2 \ar@{-->}[rr]^{\varphi} && X_3 \subset \P^3 
	}
	}
\end{center}
where $X_3 \subset \P^3$ denotes the Cayley cubic. If $C \subset \P^2$
is a plane curve, we denote by $\tildeC \subset \blowup$ its strict
transform and by
$$\barC := \pi(\tildeC) \subset X_3 \subset \P^3$$
 the image of $\tildeC$ in $\P^3$. Furthermore, denote by $E_{i,j}\subset \blowup$ the exceptional divisor over the
intersection point of $L_i$ and $L_j$, and by $H$ the class of the pull back of a line in $\P^2$ to $\blowup$. 
\end{notation}

We are interested in curves on the Cayley cubic that do not intersect
the nodes.

\begin{lemma} \label{lNoNodes} Let $\tildeC\subset \blowup$ be the
strict transform of a curve $C$ in $\P^2$ not containing any of the
$L_i$ as components, and suppose its class is
\[
	\tildeC \equiv \alpha H - \sum_{i< j} \beta_{i,j} E_{i,j}.
\]
Then the image $\barC = \pi(\tildeC) \subset \P^3$ avoids the nodes of
the Cayley cubic if and only if $\beta_{i,j} = \beta_{k,l}$ for all
indices with $\{i,j,k,l\} = \{1,2,3,4\}$ and $\alpha = \sum_j
\beta_{i,j}$ for every $i$.
\end{lemma}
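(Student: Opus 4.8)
\emph{Proof plan.} The plan is to convert the condition ``$\barC$ avoids the nodes'' into an intersection-number condition on the surface $\blowup$ and then to solve a small linear system. Recall, from the classical facts cited just before the statement, that the resolution $\pi\colon \blowup \to X_3$ of $\vp$ is the anticanonical morphism of a weak del Pezzo surface of degree $3$: it contracts precisely the four pairwise disjoint $(-2)$-curves $\tildeLi$ (each $\tildeLi$ to the node $\nu_i$) and restricts to an isomorphism over $X_3 \smallsetminus \{\nu_1,\dots,\nu_4\}$. In particular $\pi^{-1}(\nu_i) = \tildeLi$ as a set. Since $C$ contains no $L_i$ as a component, $\tildeC$ contains no $\tildeLi$ as a component, and so $\barC = \pi(\tildeC)$ meets $\nu_i$ precisely when $\tildeC$ meets $\tildeLi$; as these are effective curves without a common component, they meet if and only if $\tildeC\cdot\tildeLi > 0$. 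Hence $\barC$ avoids all four nodes if and only if $\tildeC\cdot\tildeLi = 0$ for $i=1,2,3,4$.

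Next I would compute these intersection numbers. The line $L_i$ passes through exactly the three base points lying on it, namely the points $L_i\cap L_j$ with $j\neq i$, so its strict transform has class $\tildeLi \equiv H - \sum_{j\neq i} E_{i,j}$ (writing $E_{i,j}=E_{j,i}$). With the intersection form on $\blowup$ given by $H^2=1$, $H\cdot E_{i,j}=0$, and $E_{i,j}\cdot E_{k,l}=-\delta_{\{i,j\},\{k,l\}}$, one obtains
\[
\tildeC\cdot\tildeLi = \Bigl(\alpha H - \sum_{k<l}\beta_{k,l}E_{k,l}\Bigr)\cdot\Bigl(H - \sum_{j\neq i}E_{i,j}\Bigr) = \alpha - \sum_{j\neq i}\beta_{i,j},
\]
so the condition of the previous paragraph becomes the system of four equations $\alpha = \sum_{j\neq i}\beta_{i,j}$, $i=1,2,3,4$.

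Finally I would verify that this system is equivalent to the two conditions in the statement. One direction is immediate. For the other, the combination $(\mathrm{eq}_1)+(\mathrm{eq}_2)-(\mathrm{eq}_3)-(\mathrm{eq}_4)$ gives $\beta_{1,2}=\beta_{3,4}$, and the two analogous combinations give $\beta_{1,3}=\beta_{2,4}$ and $\beta_{1,4}=\beta_{2,3}$, i.e.\ $\beta_{i,j}=\beta_{k,l}$ whenever $\{i,j,k,l\}=\{1,2,3,4\}$; granted these equalities, $\sum_{j\neq i}\beta_{i,j}$ is independent of $i$, so the system reduces to exactly those equalities together with the single relation $\alpha=\sum_j\beta_{i,j}$.

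The step carrying the real content is the first one, the identification $\pi^{-1}(\nu_i)=\tildeLi$; it rests on the classical description of the Cayley cubic as the anticanonical model of the weak del Pezzo surface of degree $3$ with four nodes, whose only contracted curves (equivalently, whose only $(-2)$-curves) are the strict transforms of the four lines $L_i$. Everything after that is the displayed computation together with elementary linear algebra.
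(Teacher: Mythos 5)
Your proposal is correct and follows essentially the same route as the paper: both reduce "avoiding the nodes" to the vanishing of the intersection numbers $\tildeC\cdot\tildeLi$ (using that the $\tildeLi$ are exactly the curves contracted to the nodes and that $\tildeC$ shares no component with them), compute $\tildeLi\equiv H-\sum_{j\neq i}E_{i,j}$ to get the four equations $\alpha=\sum_{j\neq i}\beta_{i,j}$, and solve that linear system. You merely spell out the justification of the first reduction and the linear algebra more explicitly than the paper does.
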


\begin{proof}
Since the preimage of the nodes are the lines $L_i$ we want
$\tildeC. \tildeLi= 0$ for all $i$ where $\tildeLi$ is the strict
transform of $L_i$ on the blow up.  This gives the following linear
system of equations
\[
	\begin{pmatrix}
		1 & -1 & 0 & -1 & 0 & -1 & 0 \\
		1 & -1 & 0 & 0 & -1 & 0 & -1 \\
		1 & 0 & -1 & -1 & 0 & 0 & -1 \\
		1 & 0 & -1 & 0 & -1 & -1 & 0
	\end{pmatrix} \cdot
	(\alpha, \beta_{1,2},\beta_{3,4},\beta_{1,3},\beta_{2,4},\beta_{1,4},\beta_{2,3})^t = 0
\]
The solution of this system is the one claimed above.
\end{proof}

\begin{definition}
We call a curve $C \subset \P^2$ of type $(b_1,b_2,b_3)$ if its strict
transform has class
\[
	\tildeC \equiv (b_1+b_2+b_3) H - b_1(E_{1,4}+E_{2,3}) - b_2(E_{2,4}+E_{1,3}) - b_3(E_{3,4}+E_{1,2})
\]
If $C$ does not contain any of the lines $L_i$ as component, then the
image $\barC \subset \P^3$ of such a curve avoids the nodes of the
Cayley cubic by Lemma \ref{lNoNodes}.
\end{definition}

We collect some numerical facts about these curves:

\begin{lemma}
Let $C \subset \P^2$ be a curve of type $(b_1,b_2,b_3)$ and $\tildeC$
its strict transform and $\barC \subset \P^3$ its image. Then
\begin{enumerate}
\item The degree of $\barC$ is $\deg (\barC)=b_1+b_2+b_3$.
\item The arithmetic genus of $\barC$ is $g_a=\binom{b_1+b_2+b_3}{2}-(b_1^2+b_2^2+b_3^2)+1$.
\item The expected number of moduli of $C$ is $\deg (\barC)+g_a$.
\end{enumerate}
\end{lemma}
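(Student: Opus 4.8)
The plan is to pull everything back to the smooth rational surface $\blowup$ and compute there. First I would note that the morphism $\pi\colon\blowup\to X_3\subset\P^3$ is given by the linear system $|3H-\sum_{i<j}E_{i,j}|$ (the strict transforms of the cubics through the six base points $L_i\cap L_j$), and that this class is exactly $-K_{\blowup}$; thus $\pi$ is the anticanonical morphism of the weak del Pezzo surface $\blowup$ of degree $3$ and $\pi^{\ast}\OO_{\P^3}(1)=-K_{\blowup}$. Since the Cayley cubic $X_3$ has precisely four nodes and no worse singularities, $\pi$ contracts exactly the four $(-2)$-curves $\tildeLi$ and is an isomorphism over $X_3\setminus\{\text{four nodes}\}$. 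Because $C$ is of type $(b_1,b_2,b_3)$ and contains no $L_i$, Lemma~\ref{lNoNodes} gives $\tildeC\cdot\tildeLi=0$ for all $i$; two effective divisors with no common component and zero intersection number are disjoint, so $\tildeC$ lies in the locus where $\pi$ is an isomorphism and $\pi$ restricts to an isomorphism $\tildeC\xrightarrow{\ \sim\ }\barC$. Hence $\deg\barC$ and $p_a(\barC)=p_a(\tildeC)$ are read off from the class of $\tildeC$ in $\Pic(\blowup)$.

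Next comes the (routine) intersection-theory bookkeeping on $\blowup$, using $H^2=1$, $E_{i,j}^2=-1$, and orthogonality of the remaining generators. Writing $b=b_1+b_2+b_3$ one gets $\tildeC^2=b^2-2(b_1^2+b_2^2+b_3^2)$ and $\tildeC\cdot K_{\blowup}=-\tildeC\cdot(-K_{\blowup})=-b$. Statement (a) is then immediate: $\deg\barC=\tildeC\cdot(-K_{\blowup})=b_1+b_2+b_3$. For (b), adjunction on $\blowup$ gives
\[
p_a(\barC)=p_a(\tildeC)=1+\tfrac12\bigl(\tildeC^2+\tildeC\cdot K_{\blowup}\bigr)=1+\tfrac12\bigl(b^2-b\bigr)-(b_1^2+b_2^2+b_3^2)=\binom{b_1+b_2+b_3}{2}-(b_1^2+b_2^2+b_3^2)+1.
\]
For (c), the expected number of moduli of $C$ is the expected value of $h^0(\blowup,\OO_{\blowup}(\tildeC))$ (equivalently, of $h^0$ of $\OO_{\P^2}(b)$ twisted by the ideal of the six fat points of multiplicities $b_1,b_1,b_2,b_2,b_3,b_3$), that is, the Euler characteristic; Riemann--Roch on $\blowup$ yields
\[
\chi\bigl(\OO_{\blowup}(\tildeC)\bigr)=\chi(\OO_{\blowup})+\tfrac12\,\tildeC\cdot(\tildeC-K_{\blowup})=1+\tfrac12\bigl(b^2+b\bigr)-(b_1^2+b_2^2+b_3^2)=\binom{b+1}{2}+1-(b_1^2+b_2^2+b_3^2),
\]
and since $b+\binom{b}{2}=\binom{b+1}{2}$ this equals $\deg\barC+g_a$, which is (c).

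The step to watch is the identification $\pi|_{\tildeC}\colon\tildeC\xrightarrow{\ \sim\ }\barC$, on which (a) and (b) rest: it needs both that the only curves contracted by $\pi$ are the four $\tildeLi$ — the classical fact, recorded above, that $\blowup$ resolves exactly the four nodes of the Cayley cubic — and that $\tildeC$ is disjoint from all the $\tildeLi$, which is precisely the node-avoidance supplied by Lemma~\ref{lNoNodes} together with the hypothesis that $C$ contains no $L_i$. The qualifier ``expected'' in (c) is harmless: one always has $h^2(\OO_{\blowup}(\tildeC))=0$, since $K_{\blowup}-\tildeC$ has negative intersection with the nef class $-K_{\blowup}$ and hence is not effective, so the only possible discrepancy from the stated count is $h^1(\OO_{\blowup}(\tildeC))\neq 0$ when the six (special) base points fail to impose independent conditions for unbalanced $(b_1,b_2,b_3)$; the Euler-characteristic formula above is exact in every case, so (c) is the purely formal identity $\chi(\OO_{\blowup}(\tildeC))=\deg\barC+g_a$ obtained by combining Riemann--Roch with (a), (b), and $b+\binom{b}{2}=\binom{b+1}{2}$.
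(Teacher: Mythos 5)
Your proof is correct, and for parts (a) and (b) it is essentially the paper's argument: compute $-K_{\blowup}\cdot\tildeC$ and apply adjunction on $\blowup$. You add one justification the paper leaves implicit, namely that $\pi$ restricts to an isomorphism $\tildeC\isomto\barC$ because $\tildeC\cdot\tildeLi=0$ forces disjointness from the contracted $(-2)$-curves (this does use the standing assumption, stated in the Definition rather than the Lemma, that $C$ contains none of the $L_i$); that is a worthwhile clarification, since without it $p_a(\barC)=p_a(\tildeC)$ would need an argument. For (c) you take a genuinely different, though equivalent, route: the paper counts plane curves of degree $b_1+b_2+b_3$ and subtracts the conditions imposed by the six fat base points of multiplicities $b_1,b_1,b_2,b_2,b_3,b_3$ (note the printed formula there sums $\binom{b_i+1}{2}$ only over $i=1,2,3$, i.e.\ over three of the six points, which as written does not simplify to $\deg\barC+g_a$ --- one needs $2\sum_{i=1}^3\binom{b_i+1}{2}$), whereas you identify the expected count with $\chi\bigl(\OO_{\blowup}(\tildeC)\bigr)$ and evaluate it by Riemann--Roch on $\blowup$. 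Your version automatically accounts for all six points, and the observation that $h^2=0$ (since $K_{\blowup}-\tildeC$ meets the nef class $-K_{\blowup}$ negatively) makes precise that the only possible discrepancy between the expected and actual dimension is $h^1\neq 0$; the paper's direct count is more elementary but buys no such control. Both computations yield the same number, so the statement stands as in the paper.
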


\begin{proof}
For the first two items we work on $\blowup$.  The linear system of
$\varphi$ has class $-K = 3H-\sum E_{i,j}$ there, i.e., it consists of
curves of type $(1,1,1)$.  This is also the anticanonical system. We
have
\[
	\deg \barC = -K.\tildeC = 3(b_1+b_2+b_3) - 2 b_1 - 2 b_2 - 2 b_3 = b_1+b_2+b_3.
\]
The arithmetic genus of $\barC$ is given by the adjunction formula
\[
	2g_a-2 = K.\tildeC+\tildeC^2 = -b_1-b_2-b_3 + (b_1+b_2+b_3)^2-2b_1^2-2b_2^2-2b_3^2.
\]
For the number of moduli, we work with plane curves.  The dimension of
the space of degree $b_1+b_2+b_3$ curves in $\P^2$ is $\binom{b_1+b_2+b_3+2}{2}$, the number of conditions for a $b_i$ fold point is $\binom{b_i+1}{2}$. Therefore the expected number of moduli is
\[
	\binom{b_1+b_2+b_3+2}{2} - \sum_{i=1}^3 \binom{b_i+1}{2}
\]
which simplifies to the formula above.
\end{proof}

\begin{example}
We have for examples:

\begin{center}
\renewcommand{\arraystretch}{1.25}
\begin{tabular}{|c|c|}
\hline
type & image in $\P^3$ \\
\hline
$(1,0,0)$ & a line \\
$(1,1,0)$ & a plane conic \\
$(1,1,1)$ & a plane cubic \\
$(2,1,1)$ & an elliptic normal curve of degree $4$ \\
$(2,2,2)$ & a canonical curve, i.e., degree $6$ and genus $4$  \\
$(1,2,3)$ & a sextic curve of genus 2 \\
\hline
\end{tabular}
\end{center}

\end{example}

Let us now look at a contact quadric to the Cayley surface.
\begin{proposition}
Let $Q \subset \P^3$ be a contact quadric defined by a generalized $2
\times 2$ diagonal minor of $A$. Then there exists a line $\Lcontact
\subset \P^2$ such that the transform $\sigma_* \pi^* (Q \cap X_3)$ of
$Q$ on $\P^2$ is
\[
	q = \Lcontact^2 + L_1 + L_2 + L_3 + L_4.
\]
\end{proposition}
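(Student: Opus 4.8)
The plan is to transport the intersection curve $Q\cap X_3$ to $\P^2$ along the birational parametrization $\varphi=\pi\circ\sigma^{-1}$ and to identify the result using the geometry of the Cayley cubic and its minimal resolution $\sigma\colon\blowup\to\P^2$, $\pi\colon\blowup\to X_3$. By the theory of contact surfaces of \cite{Cat81}, a generalized $2\times2$ diagonal minor $Q$ of $A$ is a contact surface to $X_3=\{\det A=0\}$, so that $Q\cap X_3=2\barC$ as schemes for an effective curve $\barC\subset X_3$, which has degree $3$ since $\deg Q=2$ and $\deg X_3=3$. First I record that $\sigma_*\pi^*(Q\cap X_3)$ equals the plane sextic $V\bigl(Q(X_0,X_1,X_2,X_3)\bigr)$: writing $\pi^*x_i$ in terms of $\sigma^*X_i$, each of the six $\sigma$-exceptional divisors enters once (the plane cubics $\{X_i=0\}$ pass simply through the six base points), so since $Q$ is quadratic one gets $\pi^*(Q\cap X_3)=\sigma^*V(Q(X_0,\dots,X_3))-2\sum E_{ij}$, and pushing forward kills the $E_{ij}$.

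Next, $L_1L_2L_3L_4$ divides $Q(X_0,\dots,X_3)$. A generalized diagonal minor has the form $Q=v^{t}\,\mathrm{adj}(A)\,v$ for a constant vector $v$, and $\mathrm{adj}(A)$ — the adjugate of a $3\times3$ matrix dropping to rank $1$ along the rank-$1$ locus of $A$ — vanishes identically at each node $\nu_i$ of $X_3$. Since $\varphi$ contracts $L_i$ to $\nu_i$, on $L_i$ the forms $X_0|_{L_i},\dots,X_3|_{L_i}$ are all proportional to a single cubic form on $L_i$, so $Q(X_0,\dots,X_3)|_{L_i}$ is that form squared times $Q(\nu_i)=0$; thus each $L_i$ divides $Q(X_0,\dots,X_3)$, and we may write $Q(X_0,\dots,X_3)=L_1L_2L_3L_4\cdot\rho$ with $\rho$ a quadratic form.

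It remains to see $\rho=\Lcontact^2$ for a linear form $\Lcontact$, which I would read off from the resolution. Because $2\barC=Q\cap X_3$ and $Q$ vanishes at each $\nu_i$, the contact curve $\barC$ passes through every node; and since the contact is scheme-theoretic also at the $A_1$-points, it passes through $\nu_i$ simply, so the strict transform $\tildeC\subset\blowup$ satisfies $\tildeC\cdot\tildeLi=1$ for each $i$, where $\tildeLi=\pi^{-1}(\nu_i)$ is the corresponding $(-2)$-curve. As $Q\cap X_3$ is Cartier on $X_3$ and the $\tildeLi$ are $\pi$-contracted, $\pi^*(Q\cap X_3)=2\tildeC+\sum_i\tildeLi$; since $Q\cap X_3\in|-2K_{X_3}|$ and $\pi$ is crepant, this class is $-2K_{\blowup}=6H-2\sum E_{ij}$, while $\sum_i\tildeLi\equiv 4H-2\sum E_{ij}$ (each $E_{ij}$ meets $\tilde L_i$ and $\tilde L_j$, as $L_i$ passes through its three base points). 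Subtracting gives $\tildeC\equiv H$, so $\tildeC$ is the strict transform of a line $\Lcontact\subset\P^2$ and $\sigma_*\tildeC$ is an effective divisor of degree $\tildeC\cdot H=1$, namely $\Lcontact$. Hence
\[
\sigma_*\pi^*(Q\cap X_3)=\sigma_*\Bigl(2\tildeC+\sum_i\tildeLi\Bigr)=2\Lcontact+L_1+L_2+L_3+L_4,
\]
i.e.\ $q=\Lcontact^2+L_1+L_2+L_3+L_4$; comparing with the first paragraph this also identifies $\rho=\Lcontact^2$.

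The main obstacle is the bookkeeping in the third paragraph — verifying that each $(-2)$-curve occurs with multiplicity exactly $1$ in $\pi^*(Q\cap X_3)$, equivalently that $\barC$ is simple at each node, and that $\pi$ is crepant so that $\pi^*(-2K_{X_3})=-2K_{\blowup}$. Everything else is either a routine substitution of the $X_i$ into the minor $Q$ or a formal consequence of the contact property.
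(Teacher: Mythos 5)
Most of your argument is sound and runs parallel to the paper's: the identification of $\sigma_*\pi^*(Q\cap X_3)$ with the substituted sextic $Q(X_0,\dots,X_3)$, and the divisibility of that sextic by $L_1L_2L_3L_4$ because a generalized diagonal minor $v^{t}\,\mathrm{adj}(A)\,v$ vanishes on the rank-one locus of $A$, i.e.\ at the four nodes, are both correct (the paper phrases the latter as ``$Q$ is one of the minors defining the ideal of the nodes''). The genuine gap is exactly the step you dismiss as bookkeeping: that each $(-2)$-curve $\tildeLi$ occurs with multiplicity one in $\pi^*(Q\cap X_3)$, i.e.\ that $\barC$ passes through each node simply. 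This is \emph{not} a formal consequence of the contact property. At an $A_1$-point, $Q\cap X_3=2\barC$ only gives that the multiplicity $a_i$ of $\tildeLi$ in $\pi^*(Q\cap X_3)$ is $\ge 1$ (and equals $\tildeC\cdot\tildeLi$); if $\barC$ is Cartier at the node then $a_i$ is even, and this genuinely happens for generalized diagonal minors: in the paper's normalization of the symmetric representation of the Cayley cubic, the choice $v=(1,1,1)$ gives a quadric cone with vertex at the node $(1{:}1{:}1{:}1)$, it contains (and is contact to $X_3$ along) the three tetrahedron edges through that node, so the corresponding $a_i\ge 2$ and the asserted formula itself degenerates (the transform becomes $L_{i_0}^{3}L_jL_kL_l$). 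So your third paragraph needs an actual argument --- for instance that for the quadric under consideration each node is a smooth point of $Q$, whence its local equation on $X_3$ has nonzero linear term and order exactly $1$ along $\tildeLi$ --- and this is the real content of the proposition, not bookkeeping.

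The paper sidesteps the local multiplicities entirely with a global degree count: the transform is a sextic divisible by all four lines, it is an even divisor away from the lines, and the even residual is a nonconstant square (the contact curve maps to an honest plane curve not supported on the $L_i$), so the residual must be $\Lcontact^2$ with $\Lcontact$ a line, which simultaneously forces each $L_i$ to appear exactly once. Your own setup can be repaired in the same spirit: write $\pi^*(Q\cap X_3)=2\tildeC+\sum_i a_i\widetilde{L_i}$ with unknown $a_i\ge 1$, push forward by $\sigma$ and take degrees to get $2\deg\sigma_*\tildeC+\sum_i a_i=6$; once one knows $\sigma_*\tildeC\neq 0$ (equivalently, $\barC$ is not a union of tetrahedron edges --- the nondegeneracy that also excludes the cone example above), this forces $a_i=1$ and $\deg\sigma_*\tildeC=1$ in one stroke. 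In other words, the multiplicity-one statement should come out of the count, as in the paper, rather than go into it.
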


\begin{proof}
The contact quadric passes through all nodes of $X_3$ (it is one of
the minors defining the ideal of the nodes), so its transform contains
the lines $L_1,\dots,L_4$. Outside of the nodes the contact quadric
intersects the Cayley cubic with multiplicity $2$. It follows that the
transform has the form
\[
	\Lcontact^2 + L_1 + \dots + L_4.
\]
Since the transform of any quadric is of degree $6$ it follows that
$\Lcontact$ must be a line.
\end{proof}

Notice that the transform of $\{\det B = 0 \}$ on $\P^2$ is just the
transform of the intersection curve $\barD$ on $\P^2$. To keep with
our convention, we denote this by $D$.  In other words, on $\P^2$, we
have that $D$ is the determinant of the matrix obtained by forming the
transforms of all the entries in $B$. In view of
Proposition~\ref{pRationalComponents}, we would like $D$ to be a union
of rational curves. The idea of the construction is now to start with
such a $D$ and then try to write it as a determinant. Again we would
like to mimic the construction of Artin and Mumford. For this we need
a slight generalization of their method to the case where the contact
curve is not reduced. For this we need the following technical lemma.

\begin{lemma} \label{lSqrt} Let $D$ be a curve of type $(d,d,d)$ with
$d\ge 4$ even and $\frac{3d}{2}$ ordinary nodes on $\Lcontact$. Let
$f$ be a generator of the ideal of $D$ and $s$ a generator of the
ideal of $\Lcontact$.  Suppose that $\Lcontact$ does not pass through
any of the base points and that $D$ avoids the intersection points of
$L_c$ with the exceptional lines. Let $Z\subset \P^2$ be the subscheme
consisting of all the base points with multiplicity
$\frac{d}{2}-2$. Assume that the natural map
\begin{gather}\label{fGeneralPosition}
H^0 \left( \P^2, \OO_{\P^2}\left( \frac{3d}{2}-6 \right) \right) \to
H^0 \left( \P^2, \OO_{Z} \right)
\end{gather}
is surjective. 

Then 
there exist a polynomial $g$ on $\P^2$ such that
\begin{enumerate}
\item $f \equiv g^2 \mod s^2$
\item  the curve $\sqrt{D}$ defined by $\{g=0\}$ is of type  $(\frac{d}{2},\frac{d}{2},\frac{d}{2})$.
\end{enumerate}
\end{lemma}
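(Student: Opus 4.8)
The plan is to produce $g$ directly as a form of degree $\tfrac{3d}{2}$ on $\P^2$ vanishing to order $\ge\tfrac d2$ at each of the six base points. Write $\II_W$ for the ideal sheaf of the base points taken with multiplicity $\tfrac d2$ (so a section of $\II_W(m)$ is a degree-$m$ form with a point of multiplicity $\ge\tfrac d2$ at each base point), and recall that $Z$ in the statement is the base points with multiplicity $\tfrac d2-2$; the hypothesis \eqref{fGeneralPosition} says exactly that $H^1\bigl(\P^2,\II_Z(\tfrac{3d}{2}-6)\bigr)=0$, since on $\P^2$ one has $H^1(\OO(m))=0$. If such a $g$ is found with multiplicity \emph{exactly} $\tfrac d2$ at each base point, then its strict transform has class $\tfrac{3d}{2}H-\tfrac d2\sum E_{i,j}$, i.e.\ $\{g=0\}$ is of type $(\tfrac d2,\tfrac d2,\tfrac d2)$, which is (2), while (1) holds by construction. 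The first remark is that, once $s$ is fixed, the congruence $f\equiv g^2\bmod s^2$ only concerns the $1$-jet of $g$ along $\Lcontact=\{s=0\}$: with $\partial$ the derivation dual to $s$, it is equivalent to the pair of identities $f|_{\Lcontact}=(g|_{\Lcontact})^2$ and $(\partial f)|_{\Lcontact}=2(g|_{\Lcontact})(\partial g)|_{\Lcontact}$ of binary forms on $\Lcontact\cong\P^1$. So I split the problem as: (a) solve the first identity; (b) check the second then forces a well-defined $(\partial g)|_{\Lcontact}$; (c) realize the resulting $1$-jet by a form of the right type.

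For (a): since $\Lcontact$ is not a component of $D$ and meets none of the base points, $f|_{\Lcontact}$ is a nonzero binary form of degree $3d$ vanishing to order $\ge 2$ at each of the $\tfrac{3d}{2}$ nodes of $D$ on $\Lcontact$; as $\tfrac{3d}{2}\cdot 2=3d$, B\'ezout forces vanishing of order exactly $2$ there and no other zeros, so $f|_{\Lcontact}=g_0^2$ with $g_0$ (degree $\tfrac{3d}{2}$) the product of the linear forms vanishing at the nodes, after absorbing a scalar which is a square as $k$ is algebraically closed. For (b): $g|_{\Lcontact}$ must be $\pm g_0$, so one only needs $g_0\mid(\partial f)|_{\Lcontact}$, a one-line local check: at a node $p\in\Lcontact$, in local coordinates $(s,u)$ one has $f=\alpha u^2+\beta su+\gamma s^2+(\text{higher order})$ with $\alpha\ne 0$, so $g_0$ vanishes simply at $p$ while $(\partial f)|_{\Lcontact}=\beta u+(\text{higher order})$ vanishes at $p$; over all $\tfrac{3d}{2}$ nodes this gives the divisibility, and one sets $g_1:=(\partial f)|_{\Lcontact}/(2g_0)$, a binary form of degree $\tfrac{3d}{2}-1$.

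For (c): the pair $(g_0,g_1)$ is an element $j$ of $H^0\bigl(\P^2,\OO_{2\Lcontact}(\tfrac{3d}{2})\bigr)$, $2\Lcontact:=\{s^2=0\}$, and I want $j$ in the image of $\rho\colon H^0\bigl(\P^2,\II_W(\tfrac{3d}{2})\bigr)\to H^0\bigl(\P^2,\OO_{2\Lcontact}(\tfrac{3d}{2})\bigr)$. Tensoring $0\to\OO_{\P^2}(-2)\xrightarrow{\cdot s^2}\OO_{\P^2}\to\OO_{2\Lcontact}\to 0$ with $\II_W(\tfrac{3d}{2})$ (exact, as $W$ is disjoint from $\Lcontact$) shows $\rho$ is surjective provided $H^1\bigl(\P^2,\II_W(\tfrac{3d}{2}-2)\bigr)=0$. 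The key identity is $\II_W(\tfrac{3d}{2}-2)=L_1L_2L_3L_4\cdot\II_Z(\tfrac{3d}{2}-6)$: a section $v$ of $\II_W(\tfrac{3d}{2}-2)$, restricted to $L_1$, vanishes to order $\ge\tfrac d2$ at each of the three base points on $L_1$, to total order $\ge\tfrac{3d}{2}>\deg(v|_{L_1})=\tfrac{3d}{2}-2$, forcing $L_1\mid v$; peeling off $L_1,L_2,L_3,L_4$ in turn (removing a line lowers each relevant point-multiplicity by at most $1$, and at each stage the analogous count still exceeds the remaining degree) yields $v=L_1L_2L_3L_4\,v'$ with $\deg v'=\tfrac{3d}{2}-6$ and multiplicity $\ge\tfrac d2-2$ at every base point, i.e.\ $v'\in H^0\bigl(\II_Z(\tfrac{3d}{2}-6)\bigr)$. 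Hence $H^1\bigl(\P^2,\II_W(\tfrac{3d}{2}-2)\bigr)\cong H^1\bigl(\P^2,\II_Z(\tfrac{3d}{2}-6)\bigr)$, which is $0$ by \eqref{fGeneralPosition}. So $\rho$ is surjective; choosing $g$ in the nonempty fibre $\rho^{-1}(j)$ and then, if necessary, correcting by an element of $\ker\rho=s^2\cdot H^0(\II_W(\tfrac{3d}{2}-2))$ to reach multiplicity exactly $\tfrac d2$ at each base point (a routine semicontinuity argument, using that $\ker\rho$ contains forms of multiplicity exactly $\tfrac d2$ there and that $f$ has multiplicity exactly $d$ at each base point while $s$ is a unit there), we get a $g$ of type $(\tfrac d2,\tfrac d2,\tfrac d2)$ with $f\equiv g^2\bmod s^2$.

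The heart of the proof, and the step I expect to be the real obstacle, is (c) --- more precisely the identity $\II_W(\tfrac{3d}{2}-2)=L_1L_2L_3L_4\cdot\II_Z(\tfrac{3d}{2}-6)$. This is what lets one replace the a priori unwieldy interpolation problem for the fat points $W$ of multiplicity $\tfrac d2$ by the hypothesis \eqref{fGeneralPosition} on the much smaller scheme $Z$, and it works only because the six base points are the vertices of the complete quadrilateral $L_1\cup\dots\cup L_4$, which forces any plane curve of degree $\tfrac{3d}{2}-2$ with those multiplicities to contain all four lines. Steps (a) and (b) are a B\'ezout count and a single local expansion; the genericity encoded in the hypotheses (``$\Lcontact$ misses the base points'', ``$D$ avoids the points $\Lcontact\cap L_i$'', ``ordinary nodes on $\Lcontact$'') is used only to make the local picture of $D$ along $\Lcontact$, and the disjointness claims above, literally correct; and $d\ge 4$ even guarantees $\tfrac{3d}{2},\tfrac d2,\tfrac d2-2,\tfrac{3d}{2}-6$ are all nonnegative integers (the case $d=4$, where $Z=\emptyset$ and \eqref{fGeneralPosition} is vacuous, is consistent, since then $\II_W(4)=L_1L_2L_3L_4\cdot\OO_{\P^2}$).
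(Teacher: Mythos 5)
Your steps (a) and (b) are sound and parallel the paper's construction of $g_0$ and of the first-order term, but step (c) --- which you yourself identify as the heart of the matter --- has a genuine gap: the vanishing $H^1\bigl(\P^2,\II_W(\tfrac{3d}{2}-2)\bigr)=0$ that you need for the surjectivity of $\rho$ is false. Your section-level statement $H^0\bigl(\II_W(\tfrac{3d}{2}-2)\bigr)=l_1l_2l_3l_4\cdot H^0\bigl(\II_Z(\tfrac{3d}{2}-6)\bigr)$ is correct (the B\'ezout peeling works), but an equality of $H^0$'s does not transfer to $H^1$: multiplication by $l_1l_2l_3l_4$ embeds $\II_Z(\tfrac{3d}{2}-6)$ into $\II_W(\tfrac{3d}{2}-2)$ with nontrivial cokernel supported on the four lines, and a Riemann--Roch count gives $\chi\bigl(\II_W(\tfrac{3d}{2}-2)\bigr)-\chi\bigl(\II_Z(\tfrac{3d}{2}-6)\bigr)=(6d-10)-(6d-6)=-4$. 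Since $H^2$ of both sheaves vanishes, your $H^0$-identity forces $h^1\bigl(\II_W(\tfrac{3d}{2}-2)\bigr)=h^1\bigl(\II_Z(\tfrac{3d}{2}-6)\bigr)+4\ge 4$. Already for $d=4$ this is visible: the six double points impose only $14$ of the expected $18$ conditions on quartics, since $h^0(\II_W(4))=1$ (spanned by $l_1l_2l_3l_4$) while $\chi(\II_W(4))=15-18=-3$, so $h^1=4$. Consequently $\rho$ is never surjective --- its image has codimension at least $4$ in $H^0\bigl(\OO_{2\Lcontact}(\tfrac{3d}{2})\bigr)$ --- and your argument does not show that the particular jet $j=(g_0,g_1)$ lies in that image.

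What rescues the lemma, and what the paper does, is to show that the jet determined by $f$ is special rather than arbitrary. One first lifts $g_0$ to a global form of degree $\tfrac{3d}{2}$ with multiplicity $\tfrac d2$ at the six base points (possible by a dimension count), writes $f-g_0^2=f_1s$, and observes that $f_1$ has degree $3d-1$ and multiplicity $\ge d$ at every base point (here one uses that $\Lcontact$ misses the base points, so $s$ is a unit there). B\'ezout on each line $L_i$, which carries three base points with total multiplicity $3d>3d-1$, forces $f_1=f_1'\,l_1l_2l_3l_4$, hence $g_1=g_1'\,l_1l_2l_3l_4$ with $\deg g_1'=\tfrac{3d}{2}-5$ (divisibility of $f_1'$ by $g_0$ modulo $s$ uses that the lines $L_i$ avoid the nodes on $\Lcontact$). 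The correction is then sought in the restricted form $g_2=g_2'\,l_1l_2l_3l_4$ with $\deg g_2'=\tfrac{3d}{2}-6$, and arranging that $g_1'+sg_2'$ have multiplicity $\tfrac d2-2$ at the base points is precisely the interpolation problem for $Z$ solved by hypothesis \eqref{fGeneralPosition}. In short, your peeling trick is the right mechanism, but it must be applied to the specific first-order term $f_1$ of $f$ along $\Lcontact$, not merely to the kernel of $\rho$; used only on the kernel it cannot compensate for the nonvanishing of $H^1\bigl(\II_W(\tfrac{3d}{2}-2)\bigr)$, so as written your proof is incomplete at exactly the step you flagged as the crux.
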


\begin{proof}
Choose homogeneous coordinates $u,v,s$ in $\P^2$. Since $D$ has only
ordinary nodes on $\Lcontact = \{s=0\}$, hence, in particular,
intersects $\Lcontact$ in a divisor that is divisible by $2$, there
exist a polynomial $g_0\in k[u,v,s]$ with
\[
	g_0^2 \equiv f \mod s.
\]
More precisely, we choose $g_0$ such that it vanishes at the nodes of
$D$ on $\Lcontact$ and has multiplicity $\frac{d}{2}$ in all base
points. This is clearly possible for $d\ge 4$ since an ordinary
multiple point of order $e$ imposes $e(e+1)/2$ conditions on plane
curves, and $g_0$ has degree $3d/2$. We therefore have a polynomial
$f_1 \in K[u,v,s]$ such that
\[
	f - g_0^2 = f_1 s.
\]
Taking the derivative with respect to $s$ we get
\[
	\frac{df}{ds}- 2g_0\frac{dg_0}{ds} = f_1 + \frac{d f_1}{ds} s.
\]
For every point $P \in L_c \cap D$ all derivatives of $f$ vanish
(since $D$ has a node there). Also $g_0$ vanishes at all such points
by construction. Therefore the equation above also gives $f_1(P) =
0$. This implies that $g_0$ divides $f_1$ modulo $s$, i.e., there
exists a $g_1$ such that
\[
	2g_0g_1 \equiv f_1 \mod s.
\]
We obtain
\[
	(g_0 + g_1 s)^2 \equiv g_0^2 + 2g_0g_1 s \equiv g_0^2 + f_1s \equiv f  \mod s^2.
\]
We now want to find a $g_2 \in K[u,v,s]$ such that 
\[
	g = g_0 + g_1 s + g_2 s^2 
\]
defines a curve of type
$(\frac{d}{2},\frac{d}{2},\frac{d}{2})$. Notice that this leads to an
affine linear system of equations for the coefficients of $g_2$. To
prove the solvability of this system we have to analyze the geometric
situation in more detail.

Firstly notice that $\{f_1 = 0\}$ is a curve of degree $3d-1$ that
passes with multiplicity $d$ through each base point (since $L_c$ does
not contain any of the base points). Now there are $3$ base points on
each exceptional line. It follows by Bezout's theorem that $\{f_1 =
0\}$ contains all $4$ exceptional lines as components. We can
therefore write
\[
	f_1 = f_1' l_1 l_2 l_3 l_4
\]
where $l_i$ is an equation for $L_i$.  Furthermore, since none of the
exceptional lines pass through any of the nodes of $D$, we have that
$g_0$ divides not only $f_1$, but also $f_1'$ modulo $s$. It follows
that there is a polynomial $g_1'$ with
\[
	2g_0g_1' \equiv f_1' \mod s
\]
and 
\[
	g_1 = g_1'l_1l_2l_3l_4.
\]
We have $\deg g_1 = \deg f_1 - \deg g_0 = 3d-1 - \frac{3d}{2} =
\frac{3d}{2} -1$ and therefore
\[
	\deg g_1' = \frac{3d}{2}-5.
\]
Now, the surjectivity of the map (\ref{fGeneralPosition}) implies the
existence of a $g_2'$ of degree $\frac{3d}{2}-6$ such that
\[
	g_1' + sg_2'
\]
has multiplicity $\frac{d}{2}-2$ in each base point. With $g_2 :=
g_2'l_1l_2l_3l_4$ we obtain that
\[
	\{g_1 + sg_2 = 0 \}
\]
passes through all base points with multiplicity $\frac{d}{2}$. Since
the same is true for $g_0$ we get that
\[
	g = g_0 + sg_1 + s^2 g_2
\]
defines a curve of type $(\frac{d}{2},\frac{d}{2},\frac{d}{2})$.
\end{proof}

With this, we get an instance of our generalized version of the
Artin--Mumford method.

\begin{proposition}\label{pDeterminantalRep}
Let $D$ be a curve of type $(d,d,d)$ with $d\ge 4$ even.  Assume that
$D$ has $\frac{3d}{2}$ ordinary nodes on $\Lcontact$, $\Lcontact$
contains none of the base points, $D$ avoids the intersection points
of $L_c$ with the exceptional lines, and that the map
(\ref{fGeneralPosition}) is surjective.  Then there exists a matrix
\[
	B = \begin{pmatrix} q & r \\ r & t \end{pmatrix}
\]
with $\{q=0\}$ the transform of the contact quadric $Q$, $\{r=0\}$
defining $\sqrt{D}$, and $\{t=0\}$ of type $(d-2,d-2,d-2)$, such that
$D$ is defined by $\det B$.
\end{proposition}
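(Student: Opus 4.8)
The plan is to feed the construction packaged in Lemma~\ref{lSqrt} into the description of the contact quadric and then simply read off the entries of $B$. Recall from the Proposition above expressing the transform of a contact quadric $Q$ that the transform on $\P^2$ of $Q$ is the sextic divisor $\Lcontact^2 + L_1 + L_2 + L_3 + L_4$; fixing equations, it is cut out (up to a nonzero scalar, which we normalize to $1$) by
\[
q := s^2 l_1 l_2 l_3 l_4,
\]
where $s$ generates the ideal of $\Lcontact$ and $l_i$ that of $L_i$. Let $f$ be the chosen generator of the ideal of $D$: a homogeneous form of degree $3d$ with multiplicity exactly $d$ at each of the six base points. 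Since the hypotheses of Lemma~\ref{lSqrt} are exactly those assumed here, that lemma yields a homogeneous form $g$ of degree $\tfrac{3d}{2}$, defining the curve $\sqrt D$ of type $(\tfrac{d}{2},\tfrac{d}{2},\tfrac{d}{2})$, with $f \equiv g^2 \pmod{s^2}$. Writing $f = g^2 + s^2 w$ with $w$ homogeneous of degree $3d-2$, set $r := g$, so that $\{r = 0\} = \sqrt D$.

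The key step is to show $l_1 l_2 l_3 l_4 \mid w$. Since $g$ is of type $(\tfrac{d}{2},\tfrac{d}{2},\tfrac{d}{2})$ it has multiplicity $\tfrac{d}{2}$ at each base point, so $g^2$, and hence $f - g^2$, has multiplicity at least $d$ there; as $\Lcontact$ misses the base points, $w = (f - g^2)/s^2$ inherits multiplicity at least $d$ at each base point. Each line $L_i$ passes through three of the six base points, so the restriction $w|_{L_i}$ is a form of degree $3d-2$ on $L_i \cong \P^1$ vanishing to total order at least $3d$, hence identically zero; therefore $l_i \mid w$ for every $i$, and by pairwise coprimality $l_1 l_2 l_3 l_4 \mid w$. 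Now set
\[
t := -\frac{w}{l_1 l_2 l_3 l_4}, \qquad B := \begin{pmatrix} q & r \\ r & t \end{pmatrix},
\]
so that $qt = -s^2 w$ and
\[
\det B = qt - r^2 = -s^2 w - g^2 = -(g^2 + s^2 w) = -f ;
\]
thus $\{\det B = 0\} = D$ (rescale $B$ by $\sqrt{-1}$ if one insists on $\det B = f$ on the nose). By construction $\{q = 0\}$ is the transform of $Q$ and $\{r = 0\} = \sqrt D$ is of type $(\tfrac{d}{2},\tfrac{d}{2},\tfrac{d}{2})$, while $\deg t = (3d-2) - 4 = 3(d-2)$ and $\mathrm{mult}_P t = \mathrm{mult}_P w - 2 \ge d - 2$ at each base point $P$; tracking the base-point behaviour of $g$ guaranteed by Lemma~\ref{lSqrt} (which is where the surjectivity assumption~\eqref{fGeneralPosition} enters, keeping these multiplicities from growing) yields equality, so $\{t = 0\}$ is of type $(d-2,d-2,d-2)$. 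The same degree count shows $B$ is a symmetric map of graded-free bundles with the expected twists, since $6 + 3(d-2) = 2\cdot\tfrac{3d}{2} = 3d$.

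The genuine content is entirely upstream, in Lemma~\ref{lSqrt}: the delicate point is the existence of a \emph{global} square root $g$ of $f$ modulo $s^2$ with prescribed degree and base-point multiplicities, which is the refinement of the Artin--Mumford construction to a non-reduced contact curve $2\Lcontact$. Granting that, the only extra geometric ingredient for the present proposition is the classical combinatorial fact that the four lines $L_1,\dots,L_4$ contracted by $\varphi$ meet three at a time in the six base points; this is precisely what forces the divisibility $l_1 l_2 l_3 l_4 \mid w$, and hence lets $q = s^2 l_1 l_2 l_3 l_4$ divide $s^2 w$. The one spot needing care beyond routine bookkeeping is pinning down the \emph{exact} base-point multiplicities of $t$, which is why the general-position hypothesis~\eqref{fGeneralPosition} is included among the assumptions.
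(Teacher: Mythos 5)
Your proposal is correct and follows essentially the same route as the paper: apply Lemma~\ref{lSqrt} to get $r$ with $f\equiv r^2 \bmod s^2$, force divisibility of $f-r^2$ by each $l_i$ via a Bezout count on $L_i$ using the multiplicity $d$ of $f-r^2$ at the three base points on $L_i$, and then set $t:=-(f-r^2)/q$ so that $\det B=-f$. The only cosmetic difference is that you divide by $s^2$ first and count only base-point multiplicities of $w=(f-r^2)/s^2$ against $\deg w=3d-2$, whereas the paper counts on $f-r^2$ itself, adding the order-$2$ vanishing at $\Lcontact\cap L_i$ to exceed degree $3d$; the two degree counts are equivalent.
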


\begin{proof}
Let $f$ be a defining equation of $D$. By Lemma \ref{lSqrt} there
exists a curve $\sqrt{D}$ with defining equation $r=0$ such that $f
\equiv r^2 \mod s^2$. Therefore $f-r^2$ is divisible by $s^2$. Now
$f-r^2$ vanishes on each line $L_i$ with multiplicity $d$ in the three
base points that lie on $L_i$. Furthermore $f-r^2$ vanishes with
multiplicity $2$ on the intersection $\Lcontact \cap L_i$.  So $f-r^2$
vanishes with multiplicity at least $3d+2$ on $L_i$. Bezout's theorem
implies then that $f-r^2$ vanishes also on $L_i$.  In total $f-r^2$
vanishes on $\{q =0\} = \Lcontact^2 + L_1 + \dots + L_4$ and is
therefore divisible by $q$. Set
\[
	t := -\frac{f-r^2}{q}
\]
with this we get
\[
	-f = qt-r^2 = \det \begin{pmatrix} q & r \\ r & t \end{pmatrix}.
\]
\end{proof}

\begin{lemma}\label{lGenericityCondition}
The map (\ref{fGeneralPosition}) is surjective for $d=6$.
\end{lemma}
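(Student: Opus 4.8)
The plan is to make \eqref{fGeneralPosition} explicit for $d=6$ and then recognize it as an elementary statement about points imposing independent conditions on plane cubics. For $d=6$ one has $\frac{d}{2}-2 = 1$, so the subscheme $Z$ is simply the six base points with their reduced structure, and $H^0(\P^2,\OO_Z)$ is the six-dimensional space of functions on $\{p_1,\dots,p_6\}$; similarly $\frac{3d}{2}-6 = 3$, so the source is the ten-dimensional space $H^0(\P^2,\OO_{\P^2}(3))$ of plane cubics, and the map is evaluation at the six points. Thus the assertion to be proved is precisely that the six base points, namely the pairwise intersections $L_i\cap L_j$ of the four general lines $L_1,\dots,L_4$, impose independent conditions on cubics.

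To establish this I would exhibit, for each of the six points, a cubic that vanishes at the other five base points but not at the chosen one. Fix notation by writing the chosen point as $p = L_a\cap L_b$ with $\{a,b,c,d\} = \{1,2,3,4\}$. The key observation is that all five of the remaining base points lie on the reducible conic $L_c\cup L_d$: indeed $L_a\cap L_c$, $L_b\cap L_c$, $L_c\cap L_d$ lie on $L_c$, while $L_a\cap L_d$, $L_b\cap L_d$, $L_c\cap L_d$ lie on $L_d$, which together exhaust the five. On the other hand $p = L_a\cap L_b$ does not lie on $L_c\cup L_d$, since that would force three of the four lines to be concurrent, contrary to the general-position hypothesis already built into the construction of the Cayley cubic. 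Hence, choosing any line $\ell$ not through $p$, the cubic $\ell\cdot L_c\cdot L_d$ vanishes at the five remaining base points but not at $p$.

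Running this over all six base points shows that the evaluation map $H^0(\P^2,\OO_{\P^2}(3))\to H^0(\P^2,\OO_Z)$ is surjective, which is exactly the surjectivity claimed in \eqref{fGeneralPosition} for $d=6$. I do not anticipate a genuine obstacle; the only points requiring care are the combinatorial bookkeeping of which three of the six base points lie on a given $L_i$ and the explicit use of general position to guarantee $p\notin L_c\cup L_d$. Alternatively one could simply invoke the classical fact that any set of at most seven points in $\P^2$, no five of which are collinear, imposes independent conditions on plane cubics --- our configuration contains only collinear triples, one on each $L_i$ --- but the direct construction above is self-contained and equally short.
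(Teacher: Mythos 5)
Your proof is correct and takes essentially the same approach as the paper: both arguments establish surjectivity by producing, for each base point, a cubic of the form (line avoiding the point)$\cdot$(conic through the other five base points but not through the chosen one). The only difference is that you exhibit the conic explicitly as the reducible conic $L_c\cdot L_d$, whereas the paper obtains such a conic by a dimension count combined with the observation that no conic passes through all six base points.
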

\begin{proof}
For $d=6$ the scheme $Z$ is the union of all base points
$P_1,\dots,P_6$ with multiplicity $1$ and the map
(\ref{fGeneralPosition}) is
\[
H^0 \left( \P^2, \OO_{\P^2}\left( 3 \right) \right) \to H^0 \left( \P^2, \OO_{Z} \right).
\]
For the surjectivity of this map we construct cubics $C_i$ that pass
through all $P_j$ with $j \not= i$ but not through $P_i$.

For this, notice that there is no quadric that passes through all six
$P_i$. Indeed, assuming the contrary we would get a quadric $Q$ that
passes through $3$ points on every exceptional line and must therefore
contain all $4$ such lines as a factor, which is a contradiction.

For each $i \in \{1,\dots,6\}$ there exists a quadric $Q_i \not=0$
passing through the five $P_j$ with $j \not= i$.  Since there is no
$Q$ through all six base points, we have $Q_i(P_i) \not=0$. Now choose
a line that does not pass through $P_i$ and we get cubics $C_i =
L_iQ_i$ with the desired properties.
\end{proof}

The next problem in our construction is to find curves $D$ of type
$(d,d,d)$ with all components rational.

\begin{remark}
The existence of such curves $D$ of type $(d,d,d)$, with components
rational, and with $\frac{3d}{2}$ nodes on $\Lcontact$ is expected.
Indeed, the arithmetic genus $g_a$ of the image of $\barD$ in $\P^3$
is
\[
	g_a = \binom{3d}{2} - 3 d^2 + 1 = \frac{3d(3d-1)}{2} - 3d^2 + 1 = \frac{3}{2} d^2 - \frac{3}{2}d + 1 = 3\binom{d}{2} + 1,
\]
in particular $g_a > \frac{3d}{2}$. For $D$ to be rational we need it
to have $g_a$ nodes. This poses $g_a$ conditions.  Furthermore,
$\frac{3d}{2}$ of them should lie on $\Lcontact$. This poses a further
$\frac{3d}{2}$ conditions. So we have $\frac{3d}{2} + g_a$ conditions
and $3d+g_a$ moduli. So we expect such curves to exist.

Unfortunately, this is not enough to apply Theorem
\ref{tAuelPirutkaAlgebraicVersion}. For this we must also show that a
number of open conditions are satisfied. We propose to do this by
constructing a concrete example over a finite field $\F_p$ along the
lines suggested so far in this Section and then check the open
conditions for this example.

Now, finding a rational curve as described above explicitly is hard,
since the conditions above are highly nonlinear. For example, having a
node {\sl somewhere} means that a certain discriminant of high degree
in the coefficients of $D$ vanishes. This is a highly nonlinear
codimension $1$ condition. Having a node {\sl at a given point} on the
other hand is a linear codimension $3$ condition. So one might try to
construct such a curve by prescribing $g_a$ nodes {\sl at given
points} (some of them on $\Lcontact$). Unfortunately this poses
$$3g_a > g_a + 3d$$ conditions, which is larger than the number of
moduli.

So we must choose our curves more carefully, which takes up the
remainder of this section.
\end{remark}

\begin{construction} \label{c666}
Consider the case $d=6$ with reducible $D = D_1+D_2+D_3$ and $D_i$ of type $(1,2,3), (2,3,1)$ and $(3,1,2)$ respectively.

\begin{enumerate}
\item Choose points $P_1,\dots,P_6$ and $Q_1$ on $\Lcontact$.
\item Choose a curve  $D_1$ of type $(1,2,3)$ with nodes at $P_1$ and $P_2$ and vanishing at $Q_1$. This is possible since the
number of projective moduli of such curves is $d+g_a-1 = 6+2-1 = 7$ and the number of conditions imposed is $3+3+1 = 7$.
So generically there is only one such curve. 
\item  $D_1$ has degree $6$ and of the $6$ intersection points with $\Lcontact$ we have prescribed $5$ so far. Let $Q_2$ be the
remaining intersection point.
\item Choose a curve $D_2$ of type $(2,3,1)$ with nodes at $P_3$ and $P_4$ also passing through $Q_1$. Again there is generically 
one such curve. 
\item Let $Q_3$ be the remaining intersection point of $D_2$ with $\Lcontact$.
\item Choose a curve $D_3$ of type $(3,1,2)$ with nodes $P_5$ and $P_6$ and passing through $Q_2$.
\item Let $Q_4$ be the remaining intersection point of $D_3$ with $\Lcontact$.
\end{enumerate}

We can summarize the construction so far in the following table:

\begin{center}
\begin{tabular}{|c|c|c|c|c|c|c|c|c|c|c|}
\hline
 & $P_1$ & $P_2$ & $P_3$ & $P_4$ & $P_5$ & $P_6$ & $Q_1$ & $Q_2$ & $Q_3$ & $Q_4$ \\
\hline
$D_1$ & 2 & 2 &&&&& 1 & 1&&\\
\hline
$D_2$ &&& 2 & 2&&& 1 && 1 &\\
\hline
$D_3$ &&&&& 2& 2&  & 1&  & 1\\
\hline\hline
$D$ &2&2&2&2& 2& 2& 2 & 2 & 1& 1\\
\hline
\end{tabular}
\end{center}

Now if $Q_3 = Q_4$ this gives a curve $D$ with $9$ nodes on $\Lcontact$. This is at most a codimension $1$ condition. 
Furthermore for each $i \in \{1,2,3\}$ the curve $D_i$ is of arithmetic genus $g_a = 2$ and therefore of geometric genus $0$.
\end{construction}

\begin{remark}
For reasons not clear to us, the condition $Q_3 = Q_4$ was automatically satisfied  in all examples we tried.
\end{remark}

\begin{proposition}\label{pConicBundle}
There exists a conic bundle $Y \to \P^3$, defined over a finite field
$k_0= \F_p$, $p=10007$, defined by a homogeneous $3 \times 3$ matrix
with entries of degrees
\[
	\begin{pmatrix}
	    7 & 4 & 4 \\
	    4 & 1 & 1 \\
	    4 & 1 & 1 
	 \end{pmatrix}
\]
such that Corollary \ref{cAuelPirutkaMoreGeometric} predicts a
nontrivial unramified Brauer class for the base change of $Y$ to the
closure $k$ of $k_0$, hence $Y$ is not stably rational (over $k$).
\end{proposition}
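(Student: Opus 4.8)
The plan is to carry out Construction~\ref{c666} completely explicitly over the prime field $k_0=\F_p$, $p=10007$, to assemble its output into a symmetric matrix $N$ by means of Propositions~\ref{pCombine} and~\ref{pDeterminantalRep}, and then to verify that the conic bundle $Y\to\P^3$ defined by $N$ satisfies every hypothesis of Corollary~\ref{cAuelPirutkaMoreGeometric}. Concretely, first I would fix four lines $L_1,\dots,L_4\subset\P^2_{\F_p}$ in general position (hence the six base points $P_1,\dots,P_6$, the Cayley cubic $X_3=\{\det A=0\}$, and the parametrization $\varphi$), a contact line $\Lcontact$ through none of the base points, and a point $Q_1$ on $\Lcontact$; then run the seven steps of Construction~\ref{c666} to produce the three curves $D_1,D_2,D_3$ of types $(1,2,3)$, $(2,3,1)$, $(3,1,2)$, each existing and generically unique by the moduli counts recorded there, and check that the residual intersection points with $\Lcontact$ satisfy $Q_3=Q_4$, which occurs automatically in practice. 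This yields $\barD$ as the image on $X_3$ of $D=D_1+D_2+D_3$: a curve of type $(6,6,6)$ with $9$ ordinary nodes on $\Lcontact$, all of whose components are rational (each $D_i$ has arithmetic genus $2$ and two assigned nodes, hence geometric genus $0$).

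Since $d=6$, Lemma~\ref{lGenericityCondition} guarantees the surjectivity hypothesis \eqref{fGeneralPosition} of Lemma~\ref{lSqrt}, so Proposition~\ref{pDeterminantalRep} applies and writes the defining equation of $D$ as $\det B$ with $B=\begin{pmatrix}q&r\\r&t\end{pmatrix}$, where $q$ is the transform of a contact quadric of the Cayley cubic (a diagonal $2\times2$ minor of $A$), $r$ cuts out $\sqrt D$, and $t$ has type $(4,4,4)$. Pulling $q,r,t$ back along $\varphi$ to a quadric, a cubic and a quartic on $\P^3$, and feeding the pair $(A,B)$ into Proposition~\ref{pCombine}, produces a symmetric matrix $N$ with $\det N=-(\det A)(\det B)$; a direct check of the entries shows $N$ has exactly the degree type in the statement. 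Computing the $2\times2$ minors of $N$ over $\F_p$ and confirming they have no common zero shows $N$ has rank $\ge1$ everywhere, so $N$ defines a conic bundle $Y\to\P^3$ of graded-free type over $\F_{10007}$.

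It then remains to check the hypotheses of Corollary~\ref{cAuelPirutkaMoreGeometric}. The vanishing $\Br(\P^3)[2]=\Het^3(\P^3,\Z/2)=0$ is standard, and conditions~\ref{cCombinatoricsIntersection1}) and~\ref{cCombinatoricsIntersection2}) of Theorem~\ref{tAuelPirutkaAlgebraicVersion} hold trivially because the discriminant $\{\det N=0\}=S_1\cup S_2$ has only the two components $S_1=\{\det A=0\}$ and $S_2=\{\det B=0\}$, so $n=2$. The intersection $S_1\cap S_2=\barD$ has rational components which, being of type $(b_1,b_2,b_3)$ and containing no $L_i$, avoid the four nodes of the Cayley cubic by Lemma~\ref{lNoNodes}; these nodes are precisely the rank-$1$ locus of $A$, so $S_1$ is smooth along $\barD$. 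One then verifies, by explicit Gr\"obner-basis computation over $\F_p$, the remaining open conditions: $\det A$ and $\det B$ are irreducible; $\det B$ is smooth along $\barD$; the discriminant is good, i.e.\ the double covers of $S_1$ and $S_2$ induced by $N$ are irreducible, and $N$ has generic rank $2$ over $S_1$, over $S_2$, and over each component of $\barD$. Granting these, condition~\ref{cLocalStructure}) holds, and Proposition~\ref{pRationalComponents} shows that the double cover of each component of $\barD$ induced by $N$ is trivial --- exactly the reducibility condition demanded in Corollary~\ref{cAuelPirutkaMoreGeometric}. Hence Corollary~\ref{cAuelPirutkaMoreGeometric} applies and $H^2_{\mathrm{nr}}(k(Y)/k,\Z/2)\neq0$; since all verified conditions are open, they persist after base change from the chosen $\F_p$-point to $k=\bar k_0$, and nontrivial unramified cohomology obstructs stable rationality, so $Y$ is not stably rational over $k$.

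The main obstacle is not theoretical but lies in producing, and certifying, the explicit example. All the open conditions above must hold \emph{simultaneously} for one concrete choice of data over $\F_{10007}$, and since ``having a node somewhere'' is a highly nonlinear, codimension-one condition on the $D_i$ while ``having a node at a prescribed point'' is linear, the only practical route is to prescribe the nodes of the $D_i$ at chosen points as in Construction~\ref{c666}, keeping the node count inside the available number of moduli. Verifying $Q_3=Q_4$ for the chosen data and then running the full battery of open-condition checks on the resulting matrix $N$ is the part of the argument that genuinely requires machine computation; the delicate theoretical input --- that the closed ``splitting'' condition follows from rationality of the components of $\barD$ --- has already been isolated in Proposition~\ref{pRationalComponents}.
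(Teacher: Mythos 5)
Your proposal follows essentially the same route as the paper: run Construction~\ref{c666} over $\F_{10007}$, produce $B$ via Proposition~\ref{pDeterminantalRep} (with Lemma~\ref{lGenericityCondition} supplying the hypothesis of Lemma~\ref{lSqrt}), assemble $N$ via Proposition~\ref{pCombine}, verify the open conditions by machine computation, and obtain the splitting condition on the components of $\barD$ from Proposition~\ref{pRationalComponents}.

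One point deserves more care than your write-up gives it: the hypotheses of Corollary~\ref{cAuelPirutkaMoreGeometric} must hold over the algebraically closed field $k=\ol{k_0}$, and this is not a matter of ``open conditions persisting under base change'' from an $\F_p$-point. In particular, irreducibility of $\det A$, $\det B$, and of the double covers $\tilde S_i \to S_i$ over $\F_p$ does not by itself imply geometric irreducibility, so ``confirm by Gr\"obner-basis computation over $\F_p$'' is not yet a proof of goodness of the discriminant. The paper handles exactly this: irreducibility of the sextic and the cubic is certified by checking that their singular loci are finite (a reducible surface would be singular along a curve), and geometric irreducibility of the induced double covers is certified by Lemma~\ref{lCoverHC}, i.e.\ by exhibiting two $k_0$-rational points of $S_i$ over which the fibral conic splits over $k_0$ and, respectively, only over a quadratic extension, so that Frobenius can neither fix nor swap two putative components. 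If you replace your appeal to openness by these (or equivalent absolute-irreducibility) checks, your argument matches the paper's proof.
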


\begin{proof}
Construct a curve $D=D_1+D_2+D_3$ as in Construction \ref{c666} over
the finite field $k_0$ using a computer algebra program. Denote by
$\barD = \barD_1 + \barD_2 + \barD_3$ the image of the strict
transformation of the previous curves in $\P^3$.

Calculate a matrix representation $\det B$ for $D$ using Proposition
\ref{pDeterminantalRep}. Find a preimage $\barB$ of $B$ in $\P^3$.
The determinant of $\barB$ defines a sextic hypersurface $X_6 \subset
\P^3$. Use Proposition \ref{pCombine} to construct a matrix $N$ with
the degrees claimed.  Then check the following:

\begin{enumerate}
\item $X_6$ is irreducible. We do this by checking that the singular locus is finite.
\item $X_6$ is smooth along $\barD$.
\item The Cayley cubic is smooth along $\barD$.
\item
The rank $1$ locus of $N$ is finite.
\item
The rank $0$ locus of $N$ is empty.
\item The curves $\barD_i$ are indeed irreducible and rational. (Our calculation of the geometric genus above relied
on the assumption of $D$ being irreducible or at least connected). We do this by explicitly calculating a parametrization $\P^1 \to \barD_i$. 
\item The double cover induced by $N$ is non trivial on the Cayley cubic and $X_6$. We do this using the next Lemma \ref{lCoverHC}.
\end{enumerate}

This shows that we can apply Theorem \ref{tAuelPirutkaAlgebraicVersion} in this situation.

A Macaulay2 program doing the above calculations can be found at \cite{ABBP16}.
\end{proof}

\begin{lemma}\label{lCoverHC}
Let $\pi\colon Y \to B$ be a conic bundle defined over $k_0=\F_p$. Let
$S$ be an irreducible surface in $B$, defined over $k_0$, over which
the fibers of $Y$ generically consists of two distinct lines.  Let
$\tilde{S}\to S$ be the natural double cover of $S$ induced by
$\pi$. Then $\tilde{S}$ is irreducible if the following hold: there
exist two $k_0$-rational points $p_1, p_2\in S$ such that the fiber of
$Y$ over $p_1$ splits into two lines defined over $k_0$ whereas the
fiber over $p_2$ is irreducible over $k_0$ (and splits in a quadratic
extension of $k_0$ only).
\end{lemma}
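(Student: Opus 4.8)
The plan is to translate the irreducibility of $\tilde S$ into a statement about the function field $k_0(S)$ and the class $\gamma = \partial^2_S(\alpha) \in H^1(k_0(S),\Z/2) = k_0(S)^\times/k_0(S)^{\times 2}$, where $\alpha$ is the Brauer class of the generic fiber of $\pi$. By the same Stein-factorization analysis used in Step~1 of the proof of Theorem~\ref{tAuelPirutkaAlgebraicVersion}, the double cover $\tilde S \to S$ is (birationally) the cover of $S$ obtained by adjoining a square root of a function $f_\gamma$ representing $\gamma$; thus $\tilde S$ is irreducible if and only if $\gamma$ is \emph{nonzero} in $k_0(S)^\times/k_0(S)^{\times 2}$, i.e.\ $f_\gamma$ is not a square in $k_0(S)$. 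So the task reduces to exhibiting one place of $S$ where $f_\gamma$ is a square and another where it is a nonsquare — equivalently, two $k_0$-points witnessing the two behaviors.

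First I would fix a Zariski-open $U \subseteq S$ over which $S$ is smooth, $f_\gamma$ is a unit (neither zero nor pole), and the fibers of $Y$ are two distinct lines; shrinking if necessary, I may assume $p_1, p_2 \in U$ (both hypothesized points lie over general points of $S$ where the fiber is two lines, and one can always move them into $U$ or argue the needed open conditions hold there). For a $k_0$-point $p \in U$, the residue $f_\gamma(p) \in \kappa(p)^\times/\kappa(p)^{\times 2} = k_0^\times/k_0^{\times 2} = \Z/2$ records precisely whether the two lines of the fiber $Y_p$ are individually defined over $k_0$ (if $f_\gamma(p)$ is a square) or are conjugate over the quadratic extension $k_0(\sqrt{f_\gamma(p)})$ (if $f_\gamma(p)$ is a nonsquare); this is the standard description of the double cover $\tilde S$ as parametrizing the lines in the fibers. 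By hypothesis, $Y_{p_1}$ splits into two $k_0$-lines, so $f_\gamma(p_1)$ is a square in $k_0$; and $Y_{p_2}$ is irreducible over $k_0$ but splits over a quadratic extension, so $f_\gamma(p_2)$ is a nonsquare in $k_0$.

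Now the conclusion follows: the specialization maps $k_0(S)^\times/k_0(S)^{\times 2} \to \kappa(p)^\times/\kappa(p)^{\times 2}$ at $p_1$ and $p_2$ send the class of $f_\gamma$ to $0$ and to the nontrivial class respectively. If $f_\gamma$ were a square in $k_0(S)$, both specializations would be $0$ — contradiction. Hence $\gamma \neq 0$ in $H^1(k_0(S),\Z/2)$, so the Stein factorization of the generic fiber of $Y \to S$ is a genuine quadratic field extension of $k_0(S)$, and therefore $\tilde S$ is irreducible, as desired.

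The main obstacle is the bookkeeping in the second paragraph: one must check that the residue of $\gamma$ at a good $k_0$-point $p$ is genuinely computed by the splitting type of $Y_p$ (i.e.\ that the two notions of "double cover induced by $\pi$" — the geometric one from lines in fibers, and the cohomological one from $\partial^2_S(\alpha)$ — agree and are compatible with specialization at $k_0$-points). This is exactly the identification already invoked in the proof of Theorem~\ref{tAuelPirutkaAlgebraicVersion} (the Stein-factorization discussion around the kernel $\KK$), and the only extra point is that it behaves well under reduction at a $k_0$-rational point lying in the smooth, unramified locus $U$; one also needs $-1$ or more simply just the bare observation that $k_0^\times/k_0^{\times2} = \Z/2$ to read off the dichotomy, which holds for any finite field of odd characteristic.
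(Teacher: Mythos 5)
Your reduction to ``$f_\gamma$ is not a square'' proves the wrong notion of irreducibility, and this is a genuine gap rather than bookkeeping. The conclusion the paper needs (and the one its own proof establishes, arguing with Frobenius against \emph{geometric} reducibility) is that $\tilde S$ stays irreducible after base change to $\overline{\F}_p$: that is what makes the discriminant of $Y_k$ good over $k=\overline{k}_0$ and lets Corollary~\ref{cAuelPirutkaMoreGeometric} apply in Proposition~\ref{pConicBundle}. Your final contradiction uses only $p_2$ and yields $\gamma \neq 0$ in $H^1(k_0(S),\Z/2)$, i.e.\ $f_\gamma$ is not a square in $k_0(S)$, which is irreducibility of $\tilde S$ over $k_0$ only. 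The two notions differ exactly when $f_\gamma = c\,g^2$ with $c \in \F_p^\times$ a nonsquare constant and $g \in k_0(S)^\times$: then $\tilde S$ is irreducible over $\F_p$ but breaks into two Frobenius-conjugate components over $\F_{p^2}$, and the class $\gamma$ becomes trivial in $H^1(\overline{\F}_p(S),\Z/2)$, so the discriminant would fail to be good over $k$. A telltale sign is that in your argument the hypothesis about $p_1$ is never really used in the contradiction; if $k_0$-irreducibility were all that is claimed, $p_2$ alone would suffice, whereas the lemma's two-point hypothesis is designed precisely to kill both failure modes.

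The repair fits inside your framework but must be said: if $\tilde S$ were geometrically reducible while $k_0$-irreducible, the quadratic extension of $k_0(S)$ cut out by $\sqrt{f_\gamma}$ would be the constant-field extension $\F_{p^2}(S)/\F_p(S)$, i.e.\ $f_\gamma = c\,g^2$ as above. At your good point $p_1$ (where $S$ is smooth and $f_\gamma$ is a unit) $g$ is forced to be a unit as well, so $f_\gamma(p_1)=c\,g(p_1)^2$ is a nonsquare in $\F_p$, contradicting that $Y_{p_1}$ splits into two lines defined over $k_0$. With that case added, your specialization argument does prove geometric irreducibility, using $p_1$ and $p_2$ symmetrically. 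For comparison, the paper avoids the cohomological dictionary altogether: assuming $\tilde S_{\overline{\F}_p}$ has two components, Frobenius either fixes each (then every fiber over a $k_0$-point splits over $k_0$, contradicting $p_2$) or swaps them (then no such fiber does, contradicting $p_1$), which is shorter and makes the role of the two points transparent.
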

\begin{proof}
Under the assumptions the double cover $\tilde{S} \to S$ is defined
over $k_0$. Suppose, by contradiction, that $\tilde{S}$ were
(geometrically) reducible. Then the Frobenius morphism $F$ would
either fix each irreducible component of $\tilde{S}$ as a set, or
interchange the two irreducible components. But since $S$ is defined
over $k_0$, this would mean that $F$ either fixes each of the two
lines as a set in every fiber over a $k_0$-rational point of the base,
or $F$ interchanges the two lines in every fiber over a $k_0$-rational
point. This contradicts the existence of $p_1, p_2$.
\end{proof}


\section{Desingularization of conic bundle fourfolds}\label{sSingularities}


The conic bundles considered above are singular. In this section, we
prove a criterion for the existence of a universally $\CH_0$-trivial
desingularization for such conic bundles.  Let $k$ be an algebraically
closed field of characteristic not 2. First recall the following
notion from \cite{A-CT-P} and \cite{CT-P16}.

\begin{definition}\label{dChow0UnivTriv}
A projective variety $X$ over a field $k$ has universally trivial
$\mathrm{CH}_0$ if for any extension $L \supset k$, the degree
homomorphism $\deg \colon \mathrm{CH}_0 (X_L)\to \Z$ is an
isomorphism.  A morphism $f \colon \tilde{Y} \to Y$ of projective
varieties over $k$ is called universally $\CH_0$-trivial if for any
overfield $L \supset k$, the pushforward $f_{\ast } \colon
\mathrm{CH}_0 (\tilde{Y}_L ) \to \mathrm{CH}_0 (Y_L)$ is an
isomorphism.
\end{definition}

We will make use of the following criterion to check that a resolution
of singularities is universally $\CH_0$-trivial.

\begin{proposition}\label{pResChowTriv}
A projective morphism $f \colon \tilde{Y} \to Y$ of projective
varieties over $k$ is universally $\CH_0$-trivial if for any
scheme-theoretic point $\xi$ of $Y$, the fiber $\tilde{Y}_{\xi}$, as a
scheme over the residue field $\kappa (\xi )$, is a projective variety
over $\kappa (\xi )$ with universally trivial $\mathrm{CH}_0$.
\end{proposition}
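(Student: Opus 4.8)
The plan is to reduce the statement to a standard "fibral criterion'' for universal $\CH_0$-triviality (as in \cite{CT-P16} and \cite{A-CT-P}). First I would recall the general principle: a proper morphism $f\colon \tilde{Y}\to Y$ over $k$ induces a pushforward $f_*\colon \CH_0(\tilde Y_L)\to \CH_0(Y_L)$ for every field extension $L\supset k$, and this map is always surjective because any closed point of $Y_L$ lifts (after a further finite extension, which does not affect the argument since we allow arbitrary $L$) to a closed point of $\tilde Y_L$ in its fiber; pushing forward then recovers a multiple of the original point, and a short argument with the degree and with specialization to the function field of a point shows surjectivity of $f_*$ on $\CH_0$. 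Thus the whole content is the \emph{injectivity} of $f_*\colon \CH_0(\tilde Y_L)\to \CH_0(Y_L)$ for all $L$, and for this one may harmlessly replace $k$ by $L$ from the outset (the hypothesis on fibers is insensitive to base change, since the scheme-theoretic points of $Y_L$ map to scheme-theoretic points of $Y$ and the fibers base change accordingly).

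Next I would run the key induction on dimension / localization on $Y$. Let $Z\in\CH_0(\tilde Y)$ be a zero-cycle with $f_*Z=0$ in $\CH_0(Y)$; write $f_*Z=\partial W$ for a one-cycle $W$ on $Y$ supported on finitely many integral curves. Replacing $Y$ by the (closed) union of these curves and the images of the points of $Z$, and correspondingly shrinking $\tilde Y$ to its preimage, one is reduced to the case where $Y$ has dimension $\le 1$. The case $\dim Y=0$ is immediate: $Y=\Spec\kappa$ is a point, $\tilde Y=\tilde Y_\xi$ is by hypothesis a $\kappa$-variety with universally trivial $\CH_0$, so $\CH_0(\tilde Y)\to\Z$ is an isomorphism and $f_*$ is an isomorphism onto $\CH_0(Y)=\Z$. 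For $\dim Y=1$, pass to the generic point $\eta$ of (each component of) $Y$: by hypothesis $\tilde Y_\eta$ is a $\kappa(\eta)$-variety with universally trivial $\CH_0$, so the generic fiber of $Z$ becomes rationally equivalent to a multiple of a $\kappa(\eta)$-rational point (more precisely, its class in $\CH_0(\tilde Y_\eta)$ is determined by its degree, which is $0$ since $f_*Z=0$ has degree $0$ on that component). Spreading out this rational equivalence over a nonempty open $U\subset Y$ and iterating over the finitely many closed points of the complement — where one again invokes the hypothesis that the fiber over each such closed point has universally trivial $\CH_0$ — one concludes that $Z\sim 0$ on $\tilde Y$. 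This is exactly the Colliot-Th\'el\`ene--Pirutka style d\'evissage.

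I expect the main obstacle to be the \textbf{spreading-out step}: making precise how a rational equivalence valid over the generic point of a curve $Y$ propagates to a rational equivalence over a neighborhood, and controlling the finitely many bad points where it must be patched using the fibral hypothesis. The correct bookkeeping is via the localization exact sequence for Chow groups of zero-cycles,
\[
\bigoplus_{y\in Y^{(1)\setminus \eta}} \CH_0(\tilde Y_y)\longrightarrow \CH_0(\tilde Y)\longrightarrow \CH_0(\tilde Y_\eta)\longrightarrow 0,
\]
compatible with the analogous sequence for $Y$; a diagram chase then reduces injectivity of $f_*$ on $\CH_0(\tilde Y)$ to injectivity on each $\CH_0(\tilde Y_y)$ and on $\CH_0(\tilde Y_\eta)$, which are the hypothesis applied to the scheme-theoretic points $y$ and $\eta$. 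The only subtlety is that universal triviality of $\CH_0$ over a residue field $\kappa(\xi)$ is needed precisely because one must base change $\tilde Y_\xi$ along the (possibly transcendental) extensions $\kappa(\xi)\subset L$ that occur in the induction; this is why the definition is phrased universally, and it is what makes the d\'evissage go through. Once that is in hand, assembling the pieces gives the Proposition; alternatively one may simply cite \cite[Prop.~1.8]{CT-P16} or \cite[Lemme~1.3]{A-CT-P}, of which this is a restatement.
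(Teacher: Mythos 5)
Two things. First, for the record: the paper's own ``proof'' of this proposition is literally the citation you offer as a fallback, namely \cite[Prop.~1.8]{CT-P16} (the statement is that proposition verbatim), so the citation route coincides with the paper. But your attempted self-contained argument has genuine gaps. The claim that $f_*\colon \CH_0(\tilde Y_L)\to \CH_0(Y_L)$ is ``always surjective'' for a proper morphism is false: take $Y=\Spec L$ and $\tilde Y$ a smooth conic over $L$ with no $L$-point; then the image of $f_*$ in $\CH_0(Y)=\Z$ is $2\Z$. Surjectivity genuinely uses the hypothesis: the fiber over a closed point $y\in Y_L$ is (a base change of) a fiber of $f$ with universally trivial $\CH_0$, hence carries a zero-cycle of degree $1$ over $\kappa(y)$, whose pushforward is exactly $[y]$; lifting $y$ to a closed point only gives a multiple of $[y]$.

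More seriously, the displayed ``localization exact sequence'' with $\CH_0(\tilde Y_\eta)$ as cokernel does not exist. Localization gives $\CH_0(f^{-1}(S))\to \CH_0(\tilde Y)\to \CH_0(f^{-1}(U))\to 0$ for $U=Y\smallsetminus S$ open, and $\varinjlim_U \CH_0(f^{-1}(U))$ is \emph{not} $\CH_0(\tilde Y_\eta)$; the generic fiber is computed by $\varinjlim_U \CH_1(f^{-1}(U))$. (Already for the identity of $\P^1$ over an algebraically closed field: $\CH_0(U)=0$ for every affine open $U$, while $\CH_0$ of the generic point is $\Z$.) Correspondingly, a zero-cycle $Z$ on $\tilde Y$ is supported over closed points of $Y$, so ``the generic fiber of $Z$'' is zero and carries no information; the $\dim Y=1$ step as written stalls. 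The actual d\'evissage of Colliot-Th\'el\`ene--Pirutka lifts the \emph{relation} $f_*Z=\sum_i \mathrm{div}_{C_i}(g_i)$ rather than $Z$: the fiber of $\tilde Y_L$ over the generic point of $C_i$ is the base change along $\kappa(\xi)\subset \kappa(C_i)$ of a fiber of $f$, hence has universally trivial $\CH_0$ and in particular a zero-cycle of degree $1$; its closure gives a degree-one multisection over $C_i$, to which one pulls back $g_i$ and applies the projection formula, reducing to a discrepancy supported in finitely many closed fibers, where the hypothesis (degree-zero cycles in such a fiber vanish in $\CH_0$ of that fiber) concludes injectivity. Without this lifting mechanism the proposed reduction does not go through, so as a standalone proof the proposal is incomplete; as a citation it agrees with the paper.
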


This is \cite[Prop.~1.8]{CT-P16}. Moreover, we use this in combination
with the following result, cf.\ \cite[Ex.~2]{HPT16}.

\begin{proposition}\label{pCHowTrivVar}
A projective, possibly reducible, geometrically connected variety $X =
\bigcup X_i$ over a field $k$ has universally trivial $\mathrm{CH}_0$
if each $X_i$ is geometrically irreducible, $k$-rational with isolated
singularities, and each intersection $X_i \cap X_j$ is either empty or
has a zero cycle of degree $1$.
\end{proposition}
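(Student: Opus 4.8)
The plan is to reduce the claim about universal triviality of $\CH_0(X)$ to a statement about generating $\CH_0(X_L)$ by $L$-rational points and then show all such points are rationally equivalent with the correct multiplicities. First I would recall the standard criterion: a projective variety $X$ over $k$ has universally trivial $\CH_0$ if and only if, after base change to the function field $K=k(X)$ (or to every extension $L/k$), the group $\CH_0(X_L)$ is generated by classes of closed points of degree $1$, and moreover any two such degree-$1$ zero-cycles are rationally equivalent; equivalently, it suffices to produce, for every $L\supset k$, a single $L$-point $x_0\in X(L)$ such that every closed point of $X_L$ is rationally equivalent in $\CH_0(X_L)$ to a multiple of $x_0$ (this is the ``$X$ has a universally trivial $\CH_0$-point'' formulation, e.g.\ as in \cite{A-CT-P}). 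Since the hypotheses are stable under field extension (rationality of the $X_i$, the intersection conditions, geometric irreducibility), it is enough to verify the condition over $k$ itself and observe that the same argument works verbatim over any $L$.

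Next I would set up the combinatorial/topological input. Because $X=\bigcup X_i$ is geometrically connected and each $X_i$ is geometrically irreducible, the ``dual graph'' whose vertices are the components $X_i$ and whose edges join $X_i,X_j$ when $X_i\cap X_j\neq\emptyset$ is connected. By hypothesis each nonempty $X_i\cap X_j$ carries a zero-cycle $z_{ij}$ of degree $1$, hence determines a well-defined class in $\CH_0(X_i)$ and in $\CH_0(X_j)$. The key local fact is that each $X_i$, being $k$-rational with only isolated singularities, has universally trivial $\CH_0$: indeed a resolution $\tilde X_i\to X_i$ is birational between smooth projective $k$-rational varieties away from finitely many points, so by \cite[Prop.~1.8]{CT-P16} (our Proposition~\ref{pResChowTriv}) it is universally $\CH_0$-trivial, and $\tilde X_i$ has universally trivial $\CH_0$ since it is $k$-rational and smooth proper (a standard consequence of the existence of a decomposition of the diagonal for rational varieties); hence so does $X_i$. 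Therefore $\CH_0(X_i)\cong\Z$ via degree, and in particular every zero-cycle of degree $1$ on $X_i$ — in particular $z_{ij}$ — is rationally equivalent on $X_i$ to any chosen base point $x_i\in X_i(k)$.

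Now I would run the gluing argument. Pick for each $i$ a rational point $x_i\in X_i(k)$ (possible since $X_i$ is $k$-rational, so $X_i(k)$ is Zariski dense, and we may choose $x_i$ in the smooth locus). For an edge $(i,j)$, inside the ambient $X$ we have $x_i \sim z_{ij}$ in $\CH_0(X_i)\subset \CH_0(X)$ (pushing forward under the closed immersion $X_i\hookrightarrow X$) and likewise $z_{ij}\sim x_j$ in $\CH_0(X_j)\subset\CH_0(X)$; chaining these along a spanning tree of the connected dual graph shows all the classes $x_i$ coincide in $\CH_0(X)$, call the common class $[x_0]$. Finally, for an arbitrary closed point $P$ of $X$, choose a component $X_i$ containing $P$; since $\CH_0(X_i)\cong\Z$, we get $P\sim \deg(P)\cdot x_i$ on $X_i$, hence $P\sim \deg(P)\cdot[x_0]$ in $\CH_0(X)$. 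Thus $\CH_0(X)$ is generated by $[x_0]$ and the degree map $\CH_0(X)\to\Z$ is an isomorphism. Running this identical argument over each extension field $L$ (using that the hypotheses base change, and that degree-$1$ zero-cycles on $X_{i,L}$ remain rationally trivial because $X_{i,L}$ is still $L$-rational) gives universal triviality. The main obstacle is the careful bookkeeping at the intersections: one must make sure the degree-$1$ cycle $z_{ij}$ on $X_i\cap X_j$ really has the \emph{same} image class in $\CH_0(X)$ whether viewed through $X_i$ or through $X_j$ — which it does, because both are pushforwards of the single cycle $z_{ij}$ under the closed immersion $X_i\cap X_j\hookrightarrow X$ — and to handle the possibility that $X_i\cap X_j$ is itself singular or reducible, where only the existence of \emph{some} degree-$1$ zero-cycle (not universal triviality of its $\CH_0$) is assumed and hence only the single class, not the whole group, can be transported.
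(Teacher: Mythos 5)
Your overall architecture — establish that each component $X_i$ has universally trivial $\CH_0$, then use the degree-one cycles $z_{ij}$ on the nonempty intersections and connectedness of the dual graph to identify all base classes inside $\CH_0(X_L)$ — is the right one (the paper itself gives no proof, deferring to \cite[Ex.~2]{HPT16}, and your gluing step is carried out correctly, including the bookkeeping at possibly singular or reducible intersections). The genuine gap is in your justification of the local input. You claim that a resolution $f\colon \tilde X_i \to X_i$ is universally $\CH_0$-trivial by Proposition~\ref{pResChowTriv} ``since $f$ is birational away from finitely many points.'' That proposition requires \emph{every} scheme-theoretic fiber of $f$ to have universally trivial $\CH_0$, in particular the positive-dimensional exceptional fibers over the isolated singular points, and nothing in the hypotheses controls these: they can be arbitrary projective varieties. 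For instance, blow up ten or more points on a smooth cubic in $\P^2$ and contract the strict transform of the cubic; the result is a projective $k$-rational surface with a single isolated singular point whose exceptional fiber in the (minimal) resolution is a curve of positive genus, which does not have universally trivial $\CH_0$. So the hypothesis of Proposition~\ref{pResChowTriv} simply fails in the generality you need, and the step as written does not follow.

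The sub-claim itself (each $X_i$, being $k$-rational, projective, with isolated singularities, has universally trivial $\CH_0$) is true, but the correct use of ``isolated singularities'' is a moving argument rather than a fibral criterion: since the singular locus of $X_{i,L}$ is finite, every zero-cycle on $X_{i,L}$ is rationally equivalent to one supported in the smooth locus (through any singular closed point choose an integral curve not contained in the singular locus, normalize, and move the divisor class on the smooth projective curve off the finite preimage of the singular set; this works over any field, finite ones included). Cycles supported in the smooth locus lift to a resolution $\tilde X_i$ chosen to be an isomorphism over the smooth locus; $\tilde X_{i,L}$ is a smooth projective $L$-rational variety, so $\CH_0(\tilde X_{i,L})\cong \Z$ via degree, and pushing forward gives $\CH_0(X_{i,L})\cong \Z$. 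Note that the resolution morphism need not be universally $\CH_0$-trivial in the sense of Proposition~\ref{pResChowTriv}; one only needs surjectivity of $f_*$ after moving, plus triviality upstairs. Two smaller points: existence of $L$-points on $X_{i,L}$ (and of your base points $x_i\in X_i(k)$ when $k$ is finite, where density of rational points is not available) should be justified by the Lang--Nishimura lemma rather than by Zariski density; and with $\CH_0(X_{i,L})\cong\Z$ in hand you could dispense with the $x_i$ altogether and chain directly through the classes of the $z_{ij}$.
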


Now we are ready to state our main result about the existence of
universally $\CH_0$-desingularizations of conic bundle fourfolds.  If
$Y \to B$ is a conic bundle, we colloquially say that $Y$ has a given
rank over a point of $B$ to mean that the fibral conic has that rank
at the respective point.

\begin{theorem} \label{tCH0trivial} Let $Y \to \P^3$ be a conic bundle
with reducible discriminant $X=X' \cup X''$. Let $D = X' \cap X''$ be
the intersection curve.  Assume:
\begin{itemize}
\item $X'$ and $X''$ are smooth along $D$.
\item $X'$ and $X''$ have only isolated nodes as singularities.
\item The rank of $Y$ at all nodes of $X'$ and $X''$ is $1$. 
\item $D = D_1 \cup \dots \cup D_n$ with $D_i$ irreducible reduced.
\item $D$ has only nodes as singularities.
\item The rank of $Y$ along $D$ is $2$ outside of the nodes of $D$.
\item The rank of $Y$ is $1$ on each node of the irreducible
components $D_i$ of $D$ (but not necessarily on the intersection
points between two irreducible components $D_i$ and $D_j$ of $D$).
\end{itemize}
Then $Y$ has a universally $CH_0$-trivial desingularization. 
\end{theorem}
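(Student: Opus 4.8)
The plan is to apply the criterion Proposition~\ref{pResChowTriv} directly: I would construct an explicit resolution $f\colon \tilde Y \to Y$ as a sequence of blow-ups in smooth centers, and then check that over every scheme-theoretic point $\xi$ of $Y$, the fiber $\tilde Y_\xi$ is a $\kappa(\xi)$-rational variety (or more generally has universally trivial $\CH_0$, invoking Proposition~\ref{pCHowTrivVar} when the fiber is reducible). The singularities of $Y$ occur only over the discriminant $X$, and by the hypotheses they are of a short list of types: over the rank-$2$ locus of $X$ (the generic points of $X'$, $X''$, and of the components $D_i$ away from their nodes) the total space $Y$ is actually smooth, since a conic bundle whose fiber drops to rank $2$ in codimension one is smooth there when the double cover is nice; over the rank-$1$ points—namely the isolated nodes of $X'$ and $X''$ and the nodes of the components $D_i$—and over the intersection points of two components $D_i\cap D_j$, the total space acquires isolated or curve-like singularities whose analytic normal forms are governed by the classification I would carry out (this is the ``Proposition~\ref{pNormalForms}'' referenced in Step~3 of the proof of Theorem~\ref{tAuelPirutkaAlgebraicVersion}).

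First I would set up the local analytic normal forms. Working \'etale-locally on $\P^3$, a conic bundle is given by a ternary quadratic form $q$; I would write down normal forms for $q$ near each relevant stratum: (i) near a generic point of a rank-$2$ component, $q\sim \langle 1,1,t\rangle$ with $t$ a local equation of $X$, giving smooth $Y$; (ii) near a generic point of a curve $D_i$ where $Y$ has rank $2$, $q\sim\langle 1, s, t\rangle$ with $s,t$ local equations of $X'$ and $X''$, so $Y$ is locally $\{x^2 + s y^2 + t z^2=0\}$, which is a threefold with a transverse $A_1$-type singularity along a curve—resolved by one blow-up of that curve, with exceptional fiber a smooth quadric surface (a $\P^1\times\P^1$), hence rational; (iii) near a node of $X'$ (rank $1$ there), $q\sim \langle 1, f, g\rangle$ where $\{fg=0\}$ is the node—here I expect an isolated threefold singularity resolved by one or two blow-ups with rational exceptional fibers; (iv) near a node of a component $D_i$, and (v) near a point of $D_i\cap D_j$, similar but more delicate rank-$1$ normal forms, which is where the hypotheses ``rank $1$ on each node of $D_i$'' and ``$D$ has only nodes'' are used to pin down the local model.

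Having the normal forms, I would blow up $Y$ along the singular locus in a suitable order—first the curve-like singular locus (the strict transforms of appropriate curves dominating the rank-$2$ part of $D$), then the remaining isolated points—checking at each stage that the new exceptional divisors are $\P^1$- or $\P^1\times\P^1$-bundles over rational bases, hence rational, and that the process terminates in a smooth $\tilde Y$. Then for each point $\xi\in Y$: if $\xi$ is a smooth point the fiber is a point; over the generic point of a rank-$2$ surface or curve the fiber added is empty or a single rational variety; over a rank-$1$ point the fiber $\tilde Y_\xi$ is a union of rational surfaces meeting along rational curves with a rational point, so Proposition~\ref{pCHowTrivVar} applies and it has universally trivial $\CH_0$. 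The hypothesis that $X'$ and $X''$ are smooth along $D$ and that $D$ has only nodal singularities guarantees the centers of the blow-ups are smooth (or can be made smooth after the first blow-up), so the whole resolution is by smooth blow-ups and Proposition~\ref{pResChowTriv} yields that $f$ is universally $\CH_0$-trivial.

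The main obstacle will be step (iv)--(v): producing the local analytic normal form for $Y$ at a node of an individual component $D_i$ and at an intersection point $D_i\cap D_j$, and then verifying that the exceptional fibers of the resolution there are unions of rational varieties meeting nicely. At a point of $D_i\cap D_j$ the rank of $Y$ may be only $2$ (the hypothesis explicitly allows rank not $1$ there), so $q\sim\langle 1,s,t\rangle$ with $\{st=0\}$ the two branches of $D$—this is again the curve-$A_1$ model but now the singular curve itself is nodal, and one must check the blow-up of a nodal curve (or of its two smooth branches successively) still produces rational exceptional fibers and a variety with only mild remaining singularities; at a node of $D_i$ with rank $1$ the form is like $\langle x, y, z\rangle$-degenerate and requires a weighted blow-up analysis. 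I would handle these by reducing to a finite list of explicit local equations in $\A^4$ and resolving each by hand, as the paper's Proposition~\ref{pNormalForms} is set up to do.
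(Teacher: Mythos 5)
Your overall plan is the same as the paper's (local analytic normal forms, blow-ups of the singular locus, checking that all fibers of the resolution are rational or unions of rational pieces, then Propositions~\ref{pResChowTriv} and~\ref{pCHowTrivVar}), but as written it contains a genuine error in the local analysis and leaves open exactly the step on which the proof turns. First, your local model (ii) at a generic point of $D_i$ is wrong: the form $\langle 1,s,t\rangle$ has rank $1$ at $\{s=t=0\}$, contradicting the rank-$2$ hypothesis along $D$, and moreover the hypersurface $x^2+sy^2+tz^2=0$ is \emph{smooth} (its Jacobian forces $x=y=z=0$), so it is not ``an $A_1$ along a curve'' as you claim; the correct normal form is $x^2+sty^2-z^2=0$ (Proposition~\ref{pNormalForms}), which is singular along the line $\{s=t=x=z=0\}$. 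Relatedly, your opening structural claim that $Y$ is smooth over the rank-$2$ locus ``including the generic points of the $D_i$'' is false and contradicts your own point (ii): the singular locus of $Y$ is precisely the curve $D'$ of vertices of the rank-$2$ conics lying over $D$, so most of it sits over rank-$2$ points. (Conversely, at the isolated rank-$1$ nodes of $X'$, $X''$ off $D$ you predict an isolated singularity, whereas $Y$ is in fact smooth there, cf.\ Lemma~\ref{lNode}; harmless, but another sign the local pictures are not yet under control.)

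Second, the cases you yourself flag as ``the main obstacle''---the nodes of $D$---are precisely where the theorem's last hypothesis does its work, and your proposal never identifies this mechanism. At a rank-$2$ node of $D$ (an intersection point of $D_i\cap D_j$) the singular curve $D'$ has two crossing branches, and the resolution must blow up these branches \emph{one at a time}: blowing up both simultaneously does not desingularize. This is only possible to arrange globally because the hypothesis ``rank $1$ at every node of each irreducible component $D_i$'' guarantees that no rank-$2$ node of $D$ lies on a single irreducible component of $D'$; then the components $D'_1,\dots,D'_n$ can be blown up successively in any order, each center meeting the others only at rank-$2$ nodes where the branches belong to different components. At a rank-$1$ node of $D$ the two branches of $D'$ are in fact disjoint lines (normal form $x^2+2syz+(ty+uz)^2=0$) and ordinary blow-ups in any order suffice---no weighted blow-up is needed, contrary to your suggestion. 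Without the correct normal forms and without this use of the rank-$1$-at-nodes hypothesis, the resolution you describe cannot be shown to exist, so the argument as proposed does not close; filling in these two points is essentially reconstructing Propositions~\ref{pNormalForms} and~\ref{pNormalDesing} and the fiber computations (quadric cone plus $\P^1\times\P^1$ over rank-$2$ nodes, $\P^1\times\P^1$ elsewhere, Tsen for local triviality of the $\P^1\times\P^1$-bundles over the $D'_i$) that constitute the paper's proof.
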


\begin{remark}\label{rAdd}
Notice that both the Hassett--Pirutka--Tschinkel example from
\cite{HPT16} (see Section~\ref{eHPT}) and our new example (see
Proposition~\ref{pConicBundle}) satisfy these conditions. See
\cite{ABBP16} for computational details for our new example.
\end{remark}

\begin{theorem}\label{tMainApplication}
A very general conic bundle $Y \to \P^3$ over $\C$, defined by a homogeneous $3 \times 3$ matrix with entries of degrees
\[
	\begin{pmatrix}
	    7 & 4 & 4 \\
	    4 & 1 & 1 \\
	    4 & 1 & 1 
	 \end{pmatrix}
\]
is not stably rational.
\end{theorem}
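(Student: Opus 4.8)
The plan is to run the degeneration method of Voisin~\cite{Voi15} and Colliot-Th\'el\`ene--Pirutka~\cite{CT-P16}, using the explicit conic bundle of Proposition~\ref{pConicBundle} as the degenerate member, after lifting it from its field of definition $\F_p$ (with $p=10007$) to mixed characteristic. The three ingredients are: (a) by Proposition~\ref{pConicBundle}, via Corollary~\ref{cAuelPirutkaMoreGeometric} and hence Theorem~\ref{tAuelPirutkaAlgebraicVersion}, over $k=\ol{\F}_p$ the conic bundle $Y_0$ of that proposition has $H^2_{\mathrm{nr}}(k(Y_0)/k,\Z/2)\ne 0$; (b) by Theorem~\ref{tCH0trivial}, whose hypotheses are verified for $Y_0$ in \cite{ABBP16} (see Remark~\ref{rAdd}), this $Y_0$ admits a universally $\CH_0$-trivial resolution $\ol{Y}_0\to Y_0$; and (c) the general member of the family of all conic bundles of the given graded-free type is smooth, hence trivially admits a universally $\CH_0$-trivial resolution, namely itself.

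First I would fix the moduli. Symmetric $3\times 3$ matrices over $\P^3$ with entries of the prescribed degrees form a projective space $\P(V)$ over $\Z[1/2]$; let $T\subseteq\P(V)$ be the open subscheme over which the rank-$0$ locus in $\P^3$ is empty, carrying the tautological flat projective family $\mathcal Y\to T$ of conic bundles over $\P^3_T$. Then $T$ is integral and smooth over $\Z[1/2]$, with generic point $\eta$ whose residue field is the finitely generated field $\Q(T)$. For ingredient (c): the linear system of such quadratic forms is base-point free on the $\P^2$-bundle $\P(\EE)\to\P^3$ --- given a point $([v],x)$, if $v_1\ne 0$ take the matrix $\mathrm{diag}(a_{11},0,0)$ with $a_{11}(x)\ne 0$, and if $v_1=0$ choose the linear entries $a_{22},a_{23},a_{33}$ so that $v^{t}A(x)v\ne 0$ --- so by Bertini the general fibre of $\mathcal Y\to T$ over a characteristic-$0$ point is smooth, and by generic smoothness $Y_\eta$ is geometrically smooth. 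Proposition~\ref{pConicBundle} provides a point $t_0\in T(\F_p)$.

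Next I would construct the degeneration. Since $T$ is integral, $t_0$ lies in the closure of $\eta$, and since $T$ is regular along $t_0$ there is a discrete valuation ring $\Lambda$ with fraction field $\Q(T)$ and residue field a finite extension of $\F_p$, dominating $\OO_{T,t_0}$; thus $\Lambda$ has mixed characteristic $(0,p)$ with $p\ne 2$, and $\Spec\Lambda\to T$ sends the generic point to $\eta$ and the closed point to $t_0$. Pulling back $\mathcal Y$ gives a flat projective family $\mathcal Z\to\Spec\Lambda$ of conic bundles over $\P^3_\Lambda$ whose geometric generic fibre is the geometrically smooth conic bundle $Y_{\ol{\Q(T)}}$, which is its own universally $\CH_0$-trivial resolution, and whose geometric special fibre is $Y_0$ over $\ol{\F}_p$. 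By (a) and the birational invariance of unramified cohomology, $H^2_{\mathrm{nr}}(\ol{\F}_p(\ol{Y}_0)/\ol{\F}_p,\Z/2)\ne 0$, so the smooth proper variety $\ol{Y}_0$ carries no decomposition of the diagonal and is not universally $\CH_0$-trivial. The degeneration theorem of \cite{CT-P16}, valid over the discrete valuation ring $\Lambda$ since its residue characteristic is different from $2$, then shows that $Y_{\ol{\Q(T)}}$ has no decomposition of the diagonal either, hence, being smooth and proper, is not universally $\CH_0$-trivial.

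Finally I would pass to the very general complex member. Since $\Q(T)$ is finitely generated over $\Q$, the algebraically closed field $\ol{\Q(T)}$ embeds into $\C$, and as universal $\CH_0$-triviality of a smooth proper variety over an algebraically closed field is insensitive to further extension of that field, $Y_{\ol{\C(T)}}=Y_{\ol{\Q(T)}}\otimes\ol{\C(T)}$ is not universally $\CH_0$-trivial. By a lemma of Voisin (see \cite{Voi15}, \cite{CT-P16}), the set of $t\in T(\C)$ for which $\mathcal Y_t$ is universally $\CH_0$-trivial is a countable union of Zariski closed subsets of $T_\C$; if one of these were all of $T_\C$, the corresponding family of diagonal decompositions would specialise to one over the generic point of $T_\C$, contradicting the non-triviality just established. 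Hence that set is a countable union of proper closed subvarieties, so the very general conic bundle $Y\to\P^3$ of the given graded-free type over $\C$ is not universally $\CH_0$-trivial, and in particular not stably rational. The one genuinely delicate point is the construction of the explicit characteristic-$p$ example $Y_0$ simultaneously meeting all the open conditions of Corollary~\ref{cAuelPirutkaMoreGeometric} and of Theorem~\ref{tCH0trivial}, which is exactly the content of the computer-assisted Proposition~\ref{pConicBundle} and \cite{ABBP16}; granting it, the rest is the standard machinery of the degeneration method, the mildest subtlety being the lift across characteristics.
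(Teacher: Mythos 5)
Your proposal is correct and follows essentially the same route as the paper: the paper's proof is precisely the combination of Proposition~\ref{pConicBundle}, Theorem~\ref{tCH0trivial} with Remark~\ref{rAdd}, and the unequal-characteristic specialization principle of \cite[Thm.~1.12]{CT-P16} as used in the proof of \cite[Thm.~1.20]{CT-P16}. You have merely written out in detail the standard moduli/DVR/very-general machinery that the paper delegates to that citation, and your added checks (Bertini smoothness of the general member, the lift across characteristics, the countability argument over $\C$) are the expected ones and are sound.
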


\begin{proof}
Follows from Proposition~\ref{pConicBundle},
Theorem~\ref{tCH0trivial}, Remark \ref{rAdd} and the specialization
principle in unequal characteristic \cite[Thm.~1.12]{CT-P16}, as
employed in the proof of \cite[Thm.~1.20]{CT-P16}.
\end{proof}

To prove the above theorem some local computations are unavoidable.

\begin{proposition} \label{pNormalForms} 
Let $Y \to \P^3$ be a conic bundle with reducible discriminant $X=X'
\cup X''$. Let $D = X' \cap X''$ be the intersection curve and let
$X'$ and $X''$ be smooth along $D$. Let $D$ be reduced. 
Assume furthermore that the conic bundle has rank $2$ over the smooth locus of $D$. 
Finally let $P \in
D$ be a point.  Then we have the following local analytic normal forms

\begin{center}
\begin{tabular}{|c|c|c|c|c|c|c|c|c|c|c|}
\hline Geometry of $D$ at $P$ & Rank of $Y$ at $P$ & Normal form \\
\hline
\hline
smooth & $2$ &  $x^2 + sty^2 - z^2 = 0$ \\
\hline
node & $2$ &  $x^2 + sqy^2 - z^2 = 0$ \\
\hline 
node & $1$ & $x^2 + 2syz + (ty+uz)^2 = 0$ \\
\hline
\end{tabular}
\end{center}
Here $q = s + tu$ is quadratic in the completion $A=k\llbracket s,t,u
\rrbracket$ of the local ring at $P$ and $(x:y:z)$ are homogeneous
coordinates for $\P^2_A$.
\end{proposition}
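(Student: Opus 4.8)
The plan is to reduce to a normal-form computation for a symmetric $3\times 3$ matrix over the complete local ring $A = \widehat{\OO}_{\P^3,P} \cong k\llbracket s,t,u\rrbracket$. Choosing a local trivialization, the conic bundle near $P$ is given by a symmetric $M \in \mathrm{Mat}_3(A)$, well-defined up to $M \mapsto \lambda\,{}^{t}PMP$ ($\lambda \in A^\times$, $P \in \mathrm{GL}_3(A)$) and up to formal coordinate changes on $\Spec A$. Two facts are used throughout: since $k$ is algebraically closed of characteristic not $2$, every unit of $A$ is a square, so unit diagonal entries of a diagonalized form normalize to $1$; and $X_1^2 + X_2^2 = X_1^2 - (iX_2)^2$ with $i = \sqrt{-1}\in k$. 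Since $P \in D \subset X$ the fibre is singular, so $\rk M(P) \le 2$, and the proof splits according to whether $\rk M(P)$ is $2$ or $1$. In both cases the local equation of $X$ factors near $P$ as $f'f''$ with $f'$, $f''$ reduced local equations of $X'$, $X''$ (using that the discriminant divisor is reduced near $P$ and that the fibral conic has generic rank $2$ along each of $X'$, $X''$).

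\textbf{Rank $2$.} Two successive Gram--Schmidt steps (splitting off anisotropic unit vectors) bring $M$ to $\langle 1, g, -1\rangle$ with $g \in \mathfrak m_A$; the conic is $x^2 + gy^2 - z^2 = 0$ and $g$ is a local equation for $X$ up to a unit. If $D$ is smooth at $P$, then $\rk M(P) = 2$ by hypothesis and $X'$, $X''$ are smooth surfaces meeting along the smooth curve $D$, hence transversally; a formal coordinate change turns $(f',f'')$ into $(s,t)$, so $g = st$ up to a unit square, giving $x^2 + st\,y^2 - z^2 = 0$. If $D$ has a node at $P$, I would choose coordinates with $X' = V(s)$; then $\bar f'' := f''\bmod s \in k\llbracket t,u\rrbracket$ cuts out the nodal plane-curve germ $D$ inside the formal surface $X'$, so after a coordinate change in $t,u$ (formal Morse lemma) $\bar f'' = tu$, whence $f'' = tu + sh$ with $h \in A$; smoothness of $X''$ at $P$ forces the linear part $h(P)\,s$ of $f''$ to be nonzero, so $h \in A^\times$, and replacing $s$ by $hs$ (which preserves $X' = V(s)$) gives $f'' = s+tu$. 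Absorbing the remaining unit square into $y$, $g = sq$ with $q = s+tu$, i.e.\ $x^2 + sq\,y^2 - z^2 = 0$.

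\textbf{Rank $1$.} One Gram--Schmidt step normalizes $M$ to $\langle 1\rangle \perp M'$ with $M' = \left(\begin{smallmatrix} b & c \\ c & d\end{smallmatrix}\right)$, $b,c,d \in \mathfrak m_A$, so the conic is $x^2 + by^2 + 2cyz + dz^2 = 0$ with discriminant $bd - c^2$ up to a unit. As in the rank-$2$ node case the hypotheses let us choose coordinates with $X' = V(s)$, $X'' = V(s+tu)$, so $bd - c^2 = -s(s+tu)$ up to a unit square and a coordinate rescaling. I would then reduce $M'$ modulo $s$: the symmetric matrix $\bar M' \in \mathrm{Mat}_2(k\llbracket t,u\rrbracket)$ has $\det\bar M' = 0$ but is nonzero, indeed of generic rank $1$ (the fibre has rank $2$ generically along $X'$), hence $\bar M' = w\,{}^{t}w$ up to a unit square for a vector $w = (w_1,w_2)$ with coprime entries; reducedness and nodality of $D$ together with smoothness of its two branches force $(w_1,w_2) = (t,u)$, so after a $\mathrm{GL}_2$-change $\bar M' = \left(\begin{smallmatrix} t^2 & tu \\ tu & u^2\end{smallmatrix}\right)$. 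Lifting, $M' = \left(\begin{smallmatrix} t^2 & tu \\ tu & u^2\end{smallmatrix}\right) + sR$ for some $R \in \mathrm{Mat}_2(A)$, and a final $\mathrm{GL}_2(A)$-congruence and coordinate change, constrained so that $\det M'$ stays equal to $-s(s+tu)$ up to the square already fixed, absorb $R$ into the off-diagonal entry, giving $M' \cong \left(\begin{smallmatrix} t^2 & s+tu \\ s+tu & u^2\end{smallmatrix}\right)$ and hence $x^2 + 2syz + (ty+uz)^2 = 0$.

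The main obstacle is this last rank-$1$ normalization: one must verify that the geometric hypotheses really do annihilate every correction term and yield exactly the stated form rather than a coordinate-dependent perturbation of it, and one must make explicit the bookkeeping invoked above --- the reduced factorization of the discriminant, and the fact that two smooth surfaces whose intersection is nodal at $P$ are tangent there. None of this rests on a single hard idea; it is precisely the unavoidable local computation referred to before the statement.
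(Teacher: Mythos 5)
Your rank--$2$ analysis (both rows ``smooth'' and ``node, rank $2$'') is correct and is essentially identical to the paper's: split off two unit diagonal entries to reach $\langle 1,d,-1\rangle$, then normalize $d=st$ by transversality in the smooth case, and $d=s(s+tu)$ in the nodal case by writing $X'=\{s=0\}$, $f''\equiv tu \bmod s$ and using smoothness of $X''$ to make the coefficient of $s$ a unit. The gap is in the rank--$1$ case, and it is exactly the step you yourself flag as ``the main obstacle.'' Two things go wrong as written. First, the claim that ``reducedness and nodality of $D$ together with smoothness of its two branches force $(w_1,w_2)=(t,u)$'' is not available at the mod--$s$ stage: reducing modulo $s$ you only see $X'$, on which the rank stratification does not detect $D$ at all, so all you can extract there is that $w$ vanishes only at the origin (and even this needs an argument that the fibral rank cannot drop to $1$ along a curve germ in $X'$ through $P$ --- which does follow, since rank~$\le 1$ points are singular points of $X$ and, $X'$, $X''$ being smooth along $D$, the singular locus of $X$ near $P$ is $D$, where rank $2$ holds on the smooth locus; you should say this, as it is what gives coprimality of $w_1,w_2$). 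Pairs such as $(w_1,w_2)=(t^2,u^2)$ are not excluded by the mod--$s$ picture; the identification of $w$ with the branches of $D$ only emerges from the determinant of the \emph{lift}. Second, the assertion that ``a final $\mathrm{GL}_2(A)$-congruence and coordinate change \dots\ absorb $R$ into the off-diagonal entry'' is precisely the substantive computation and cannot be waved through: an arbitrary correction $sR$ cannot be absorbed, and the constraint that makes it possible has to be extracted explicitly.

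For comparison, the paper closes this by writing $N=s\left(\begin{smallmatrix} f & g\\ g & h\end{smallmatrix}\right)+\left(\begin{smallmatrix} t'^2 & t'u'\\ t'u' & u'^2\end{smallmatrix}\right)$ and expanding
\[
\det N \;=\; s\bigl(s(fh-g^2)+h t'^2+2g t'u'+f u'^2\bigr),
\]
then comparing with $\det N=-s(s+2tu)$. Since $t',u'$ have order $\ge 1$, this forces $fh-g^2=-1$, so the correction block is unimodular and a fibre-coordinate congruence brings it to $\left(\begin{smallmatrix}0&1\\1&0\end{smallmatrix}\right)$; transporting $(t',u')$ to $(t'',u'')$ and comparing determinants a second time gives $t''u''=\pm tu$, which (by unique factorization and coprimality) identifies $t'',u''$ with $t,u$ up to units and yields $N=\left(\begin{smallmatrix} t^2 & s+tu\\ s+tu & u^2\end{smallmatrix}\right)$. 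Note that this single determinant comparison does double duty: it both pins down $w=(t,u)$ (which your mod--$s$ argument could not) and annihilates the remaining correction terms. As it stands, your proposal proves the first two rows of the table but leaves the third unproved.
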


\begin{proof}
Let $M$ be a $3 \times 3$ matrix over $A$ representing $Y$ locally
analytically around $P$.

First assume that $Y$ has rank $2$ at $P$.  Then $M$ has rank $2$ at
$P$ and we can, after a coordinate change on $\P^2_A$, assume that
\[
	M_P = \begin{pmatrix} 1 & 0 & 0 \\ 0 & 0 & 0 \\ 0 & 0 & -1 \end{pmatrix}
\]	
Therefore, the first $2$ diagonal entries are units in $A$ and we can,
after a further coordinate change, assume that
\[
	M = \begin{pmatrix} 1 & 0 & 0 \\ 0 & d & 0 \\ 0 & 0 & -1 \end{pmatrix}
\]	
with $d$ in $A$ a local equation for the discriminant of $Y$.

\medskip

\fbox{Case 1.} In the first case of the proposition, $D$ is smooth at
$P$ and therefore $X'$ and $X''$ intersect transversally around
$P$. Consequently, we can change coordinates in $A$ to obtain $X' =
\{s=0\}$ and $X'' = \{t=0\}$ with $s,t$ linear forms, i.e., $d=st$.
This gives the first normal form.

\medskip

\fbox{Case 2.} In the second case, $D$ has a node at $P$ and therefore
$X'$ and $X''$ are tangent at $P$. Let $X' = \{s=0\}$ and $X''
=\{q=0\}$. Since $X'$ is smooth at $P$, we can assume $s$ to be
linear.  Since $D = \{s=q=0\}$ has a node in $P$, it has two smooth
normal crossing branches there. We choose $t$ and $u$ to be local
linear equations of these branches on $\{s=0\}$.  Then
\[
	q = tu \mod s
\]
and we can write
\[
	q = \alpha s + tu.
\]
Now since $X''$ is smooth at $P$, we see that $\alpha$ must
be a unit. Absorbing $\alpha$ into $s$ we obtain $d=s(s+tu)$, which 
gives the second normal form.

\medskip

\fbox{Case 3.} In the third case, $Y$ has rank $1$ at $P$. By evaluating $M$ at $P$
and changing coordinates on $\P^2_A$ as above we can assume
\[
	M = \begin{pmatrix} 1 & 0   \\ 0 & N \end{pmatrix}
\]	
with $N$ a symmetric $2 \times 2$ matrix with entries in the maximal ideal of $A$.

Since $D$ has a node at $P$ we can, as before, assume that the
discriminant $\det N = -sq$ with $q=s+2tu$ and $s,t,u$ linear as
above. (The minus sign and the $2$ will be convenient later on).

Now $M$ has rank $2$ on $\{s=0\}$ outside the origin, and rank $1$ in
the origin.

In other words, $N$ is a matrix, defined locally around the origin in
the $(t,u)$-plane, and has rank $1$ everywhere in that plane except at
the origin, where it has rank $0$ (i.e., vanishes). Let
\begin{gather}\label{fN}
N = \begin{pmatrix}
\alpha & \beta \\
\beta & \gamma
\end{pmatrix}
\end{gather}
so that $\alpha (t,u) y^2 + 2 \beta (t,u ) yz + \gamma (t, u) z^2$ is
the associated quadratic form. Hence we must have
\begin{gather}\label{fIdentity}
\alpha \gamma - \beta^2 \equiv 0
\end{gather}
identically.  Now consider the prime factorizations of $\alpha, \beta
, \gamma$: if some prime $\pi$ divides $\alpha$ to odd order, it must
divide $\gamma$ to odd order, too, since it divides the square
$\beta^2$ to even order. Hence, in that case, $\pi$ divides all three
of them, which contradicts our assumption that the rank of $N$ does
not drop to $0$ on an entire curve germ through the origin in the
$(t,u)$-plane. Hence, $\alpha, \gamma$ are coprime squares, and we can
write
\[
	(y\; z) N (y\; z)^t \equiv (t' y+ u' z)^2 \mod s
	\iff N \equiv \begin{pmatrix} t'^2 & t'u' \\ t'u' & u'^2 \end{pmatrix} \mod s
\]
with $t',u'$ at least of degree $1$, since both vanish at $P$, and
coprime. It follows that we can write $N$ as
\[
	N = \begin{pmatrix} sf & sg \\ sg & sh \end{pmatrix}+ \begin{pmatrix} t'^2 & t'u' \\ t'u' & u'^2 \end{pmatrix} .
\]
Then the discriminant of $Y$ is
\[
	\det M = \det N = s \bigl( s(fh-g^2)+ (ht'^2 +2gt'u'+fu'^2)\bigr)
\]
Since this is equal to $-s^2 - 2stu$, and $t', u'$ are power series of
degree at least $1$ in $u,t$, comparing coefficients yields
$fh-g^2=-1$. We can therefore, after changing the fiber coordinates
$y$ and $z$, assume that
\[
	\begin{pmatrix} f & g \\ g & h \end{pmatrix} = \begin{pmatrix} 0 & 1 \\ 1 & 0 \end{pmatrix},
\]
The same coordinate change applied to $(t'y+u'z)$ gives
$(t''y+u''z)$. We obtain
\[
	\det M = s \bigl( -s + 2 t''u'' )
\]
Comparing coefficients with $\det M = -sq$ above we see that we can
take $\alpha = -1$, $t''=t$ and $u''=u$.  Then
\[
	M = \begin{pmatrix} 
	1 & 0 & 0  \\ 0 & t^2  & s+tu \\ 0 & s+tu & u^2 \end{pmatrix},	\quad
	\det M = -s(s+2tu)
\]
and we get the claimed normal form.
\end{proof}

Now we desingularize in these local coordinates.

\begin{proposition} \label{pNormalDesing} Let $Y \to \P^3$ be a conic
bundle with reducible discriminant $X=X' \cup X''$. Let $D = X' \cap
X''$ be the intersection curve and let $X'$ and $X''$ be smooth along
$D$. Assume furthermore that the conic bundle has rank $2$ over the
smooth locus of $D$. Finally let $P \in D$ be a point.  With the
normal forms from Proposition~\ref{pNormalForms} we have

\begin{center}
\begin{tabular}{|c|c|c|c|c|c|c|c|c|c|c|}
\hline Geometry of $D$ at $P$ & rank of $Y$ at $P$ & Singular Locus & Desingularization\\
\hline
\hline
smooth & $2$ & a line & blow up line \\
\hline
node & $2$ &  $2$ intersecting lines & blow up lines in \\
&&& arbitrary order (but \\
&&& not at the same time) \\ 
\hline 
node & $1$ & $2$ disjoint lines &  blow up lines in \\
&&& arbitrary order or at \\
&&& the same time. \\ 
\hline
\end{tabular}
\end{center}

$\quad$

In all three cases we have the following geometry. Consider the points
$P \in D$ where $Y$ has rank $2$. The fiber $Y_P$ over $P$ consist of
two lines which intersect in a point $P' \in Y_P$. Let $D' \subset Y$
be the closure of the locus of all such intersection points $P'$.
Then $D'$ is the singular locus of $Y$. Furthermore the covering $D'
\to D$ is $1:1$ over smooth points of $D$ and $2:1$ over rank $1$
nodes of $D$. Over rank $2$ nodes of $D$, $D'$ also has a node.
\end{proposition}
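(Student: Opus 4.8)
The plan is to run through the three local analytic normal forms of Proposition~\ref{pNormalForms} one at a time: in each case compute the singular locus of the fourfold by the Jacobian criterion, identify it with the stated configuration of lines, and exhibit an explicit blow-up that resolves it; the global assertions about $D'$ then follow by patching the local pictures. Throughout, $A=k\llbracket s,t,u\rrbracket$ is the complete local ring of $\P^3$ at $P\in D$, so $Y$ is locally a single quadratic form in $\P^2_A$ cutting out a fourfold.

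\textbf{Cases $1$ and $3$.} For the normal form $x^2+sty^2-z^2=0$ (so $X'=\{s=0\}$, $X''=\{t=0\}$, and $D=\{s=t=0\}$ is smooth at $P$), dehomogenizing at $[0:1:0]$---the point where the two components of the conic $x^2-z^2=0$ meet---gives $x^2-z^2+st=0$, a rank-$4$ quadric fourfold whose singular locus is the line $\{x=z=s=t=0\}$; it is resolved by blowing up that line, as one checks chart by chart. For the rank-$1$ normal form $x^2+2syz+(ty+uz)^2=0$ the Jacobian criterion gives $Y_{\mathrm{sing}}=\ell_1\cup\ell_2$ with $\ell_1=\{[0:0:1]\}\times\{s=u=0\}$ and $\ell_2=\{[0:1:0]\}\times\{s=t=0\}$, two \emph{disjoint} lines; in the chart $z=1$ the substitution $v=u+ty$ turns the equation into the rank-$4$ quadric $x^2+v^2+2sy=0$, singular along $\ell_1$, and symmetrically near $[0:1:0]$ with $v=t+uz$, so blowing up the disjoint smooth lines $\ell_1,\ell_2$, in either order or simultaneously, resolves $Y$. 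Restricting the defining matrix to the two branches $\{s=t=0\}$ and $\{s=u=0\}$ of the node of $D$ yields the conics $x^2+u^2z^2=0$ and $x^2+t^2y^2=0$, whose nodes are $[0:1:0]$ and $[0:0:1]$ respectively; hence the two branches of $D$ lift to the two components of $D'$ and $D'\to D$ is $2:1$ over a rank-$1$ node, whereas in Case $1$ the single line $\{[0:1:0]\}\times D$ maps isomorphically onto $D$.

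\textbf{Case $2$.} Here $q=s+tu$ vanishes on $D$, so $Y_P$ is again $x^2-z^2=0$ with node $[0:1:0]$; dehomogenizing $x^2+sqy^2-z^2$ there gives $x^2-z^2+s(s+tu)=0$, and the Jacobian criterion identifies $Y_{\mathrm{sing}}$ with $\{x=z=0\}\cap\bigl(\{s=t=0\}\cup\{s=u=0\}\bigr)$, two lines $\ell_1,\ell_2$ meeting at the origin; since $D'=\{[0:1:0]\}\times D$, the curve $D'$ acquires a node over the node of $D$ while $D'\to D$ stays $1:1$. To resolve I would blow up $\ell_1$, a smooth center, verify in each chart that the strict transform of $Y$ is smooth away from the strict transform $\widetilde{\ell}_2$ of $\ell_2$ and near $\widetilde{\ell}_2$ is, after an evident coordinate change, a rank-$4$ quadric $x^2-z^2+sw=0$ singular along $\widetilde{\ell}_2$, then blow up $\widetilde{\ell}_2$; the roles of $\ell_1$ and $\ell_2$ may be exchanged, giving the ``arbitrary order'' statement. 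One also checks that blowing up the reduced union $\ell_1\cup\ell_2$, a nodal and hence non-smooth curve, does not resolve $Y$, which is why the two blow-ups must be carried out successively rather than at the same time.

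Finally, since by Proposition~\ref{pNormalForms} every point of $D$ falls under one of these three normal forms, the local computations patch to show that in a neighbourhood of $D$ the singular locus of $Y$ is precisely the closure $D'$ of the locus of nodes of the rank-$2$ fibral conics over $D$: it maps isomorphically onto the smooth rank-$2$ locus of $D$ (Case $1$), is nodal over each rank-$2$ node of $D$ (Case $2$), and maps $2:1$ over each rank-$1$ node of $D$ (Case $3$). Blowing up $D'$---and at each rank-$2$ node blowing up in succession the two lines of $D'$ through that point---then yields the desingularization. The routine but unavoidable work, and the only genuine obstacle, is the chart-by-chart smoothness verification for these blow-ups, most delicately in Case $2$, where one must track the intermediate singularity along the strict transform of the second line and confirm that the simultaneous blow-up fails.
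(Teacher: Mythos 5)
Your proposal is correct and follows essentially the same route as the paper: a case-by-case Jacobian computation of the singular locus for each normal form of Proposition~\ref{pNormalForms}, explicit chart-by-chart blow-ups (with the rank-$2$ node case reducing after one blow-up to the smooth-point normal form), and reading off the structure of $D'\to D$ from these local pictures. The paper's own proof defers the verifications to a Macaulay2 script but reproduces exactly these calculations, so the only remaining work in your write-up is the routine chart checks you already identify.
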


\begin{proof}
These are all straightforward calculations. See \cite{ABBP16} for a
Macaulay2 script doing them.
\end{proof}

\begin{remark}
Blowing up the intersection point of the two lines in the case of a
rank $2$ node does not improve things. While the strict transforms of
the two singular lines are separated we obtain a new singular line in
the exceptional divisor passing through both of the strict transforms.
\end{remark}

\begin{lemma}\label{lNode}
Let $\pi \colon Y \to \P^3$ be a conic bundle with discriminant $X$ a
surface having a node at $P\in X$. Assume $Y$ has rank $1$ at $P$ and
has rank $2$ on $X\smallsetminus \{P\}$ locally around $P$.  Then $Y$
is smooth over $P$ and has a local analytic normal form
\[
x^2 + sy^2 + 2tyz + uz^2=0
\]
where $(x:y:z)$ are homogeneous coordinates on $\P^2_A$ with
$A=k\llbracket s,t,u \rrbracket$.
\end{lemma}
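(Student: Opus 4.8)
The plan is to present $Y$ near $P$ by a symmetric $3\times 3$ matrix over the complete local ring $A=k\llbracket s,t,u\rrbracket$ of $\P^3$ at $P$, to reduce it to a diagonal-plus-block form exactly as in the proof of Proposition~\ref{pNormalForms}, and then to extract the normal form from the node hypothesis.

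\emph{Step 1 (reducing the matrix).} Let $M$ be a symmetric $3\times 3$ matrix over $A$ representing $Y$ locally analytically around $P$. Since the fibral conic at $P$ has rank $1$, the matrix $M(P)$ has rank $1$, so after a linear change of fibre coordinates we may assume $M(P)=\mathrm{diag}(1,0,0)$. Then the $(1,1)$-entry of $M$ is a unit of $A$; completing the square by a unipotent change of fibre coordinates and rescaling the first coordinate by the square root of that unit (which exists since $\mathrm{char}\,k\neq 2$ and the unit reduces to $1$ in the residue field) puts $M$ in the form $M=\mathrm{diag}(1,N)$ with $N$ a symmetric $2\times 2$ matrix whose entries $a,b,c$ lie in the maximal ideal $(s,t,u)$ of $A$. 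A local equation of the discriminant $X$ at $P$ is then $\det M=\det N=ac-b^2$, which lies in $(s,t,u)^2$ because $X$ is singular at $P$.

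\emph{Step 2 (using that $X$ has a node).} The hypothesis $\widehat{\OO}_{X,P}\cong k\llbracket x,y,z\rrbracket/(xy-z^2)$ forces, by comparison of tangent cones (equivalently, by the formal splitting lemma, valid since $\mathrm{char}\,k\neq 2$), that the degree-$2$ leading form of $\det N$ is a nondegenerate ternary quadratic form. Writing $a_1,b_1,c_1$ for the linear parts of $a,b,c$, this leading form is $a_1c_1-b_1^2$, and an elementary check shows it has rank $3$ exactly when $a_1,b_1,c_1$ are linearly independent: if they span a space of dimension at most $2$ the form involves at most two independent linear forms, hence has rank $\le 2$; if they are independent, choosing them as coordinates turns the form into $su-t^2$, which is nondegenerate. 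Therefore $a,b,c$ form a regular system of parameters of $A$, and adopting $(a,b,c)$ as a new analytic coordinate system on $\P^3$ at $P$ — renamed $(s,t,u)$ — we obtain
\[
M=\begin{pmatrix} 1 & 0 & 0 \\ 0 & s & t \\ 0 & t & u \end{pmatrix},
\]
so that $Y$ is cut out locally by $x^2+sy^2+2tyz+uz^2=0$, which is the asserted normal form (and $\det M=su-t^2$ indeed defines a node, consistently).

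\emph{Step 3 (smoothness of the total space).} This is verified directly in the three standard affine charts of $\P^2_A$. In the chart $\{x=1\}$ the local equation is $1+sy^2+2tyz+uz^2=0$, whose partial derivatives with respect to $s$ and $u$ are $y^2$ and $z^2$; these vanish simultaneously only when $y=z=0$, where the equation reads $1=0$, so there is no singular point in this chart. In the charts $\{y=1\}$ and $\{z=1\}$ the equation exhibits $s$, respectively $u$, as a power series in the remaining variables, so $Y$ is a smooth graph there. Hence $Y$ is smooth at every point of the fibre over $P$, and therefore in a neighbourhood of it. The one step deserving care is the assertion in Step 2 that a node of $X$ at $P$ forces the leading form of $\det N$ to be nondegenerate of rank $3$; this rests on the (elementary) fact that an analytic coordinate change preserves the rank of the leading form of a power series with vanishing linear term, applied to $\det N$ and to $xy-z^2$.
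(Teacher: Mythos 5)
Your proof is correct and follows the same overall architecture as the paper's: reduce the symmetric matrix to $\mathrm{diag}(1,N)$ with $N=\bigl(\begin{smallmatrix} a & b\\ b & c\end{smallmatrix}\bigr)$ having entries in the maximal ideal, show that $a,b,c$ form a regular system of parameters, take them as the new coordinates $s,t,u$, and check smoothness of the total space directly. The one substantive difference is how you justify the key step that $(a,b,c)$ is a coordinate system. The paper argues via the Jacobian ideal: since $\det N=ac-b^2$, the product rule gives $J(\det N)\subset(a,b,c)$, and because $P$ is a node the Jacobian ideal already defines $P$ as a reduced point, forcing $(a,b,c)$ to be the maximal ideal. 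You instead argue via the tangent cone: the node forces the leading form of $\det N$ to be a nondegenerate rank-$3$ quadric, and $a_1c_1-b_1^2$ has rank $3$ exactly when the linear parts $a_1,b_1,c_1$ are independent. Both are valid; the paper's Jacobian-ideal containment is slightly slicker because it sidesteps the (true, but care-requiring) invariance of the rank of the leading form under analytic isomorphism and unit multiplication, which you correctly flag as the delicate point. Your explicit chart-by-chart smoothness verification fills in what the paper dismisses as "a direct calculation," which is a welcome addition.
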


\begin{proof}
Let $M$ be a $3 \times 3$ matrix over $A$ representing $Y$ locally
analytically around $P$. By evaluating $M$ at $P$ and changing
coordinates on $\P^2_A$ as above we can assume
\[
	M = \begin{pmatrix} 1 & 0   \\ 0 & N \end{pmatrix}
\]	
with $N$ a symmetric $2 \times 2$ matrix with entries in the maximal
ideal of $A$. Let
\[
N = \begin{pmatrix}
a & b\\
b & c
\end{pmatrix}.
\]
The Lemma follows if we can show that $a=b=c=0$ defines $P$ as a
reduced point because then we can choose $a,b,c$ as local
coordinates. Since $P$ is assumed to be a node $\det (N)=0$, we have
that the Jacobian ideal $J$ of $\det (N)$ defines $P$ as a reduced
point. Since $J \subset (a,b,c)$ by the product rule for derivatives,
our claim follows.  The fact that the total space of $Y$ is smooth
above $P$ is then a direct calculation.
\end{proof}

\begin{proof}[Proof of Theorem~\ref{tCH0trivial}]
We have to verify the hypotheses of Propositions~\ref{pResChowTriv}
and~\ref{pCHowTrivVar} for the resolutions $\tilde{Y}\to Y$ that we
produced in Proposition \ref{pNormalDesing}.

Since the singular locus of $X'$ and $X''$ consists only of isolated
nodes at rank $1$ points outside of $D$, the conic bundle $Y$ is
smooth outside of the preimage of $D$ by Lemma \ref{lNode}.

Let $D'$ be the closure of the locus of intersection points of lines
in fibers over $D$. By our assumptions in Theorem~\ref{tCH0trivial}
the conditions of Propositions \ref{pNormalForms} and
\ref{pNormalDesing} are satisfied. Furthermore, the local normal forms
studied in these Propositions are the only ones that occur.  It
follows that the singular locus of $Y$ is $D'$. Let $D'=D'_1 + \dots +
D'_n$ be its decomposition into irreducible components. By
Proposition~\ref{pNormalDesing} these components are birational to the
components of $D$.

We want to blow up the $D_i'$ in arbitrary order to obtain a
desingularization.  According to Proposition \ref{pNormalDesing}, the
only problem with our plan of blowing up the $D'_i$ in arbitrary order
is that over a rank $2$ node of $D_i$ both branches of $D'$ could get
blown up at the same time if this node is on only one irreducible
component $D_{i_0}'$. This would not lead to a desingularization over
rank $2$ nodes.  With our assumption that $Y$ has rank $1$ over all
nodes of irreducible components of $D$ we avoid this problem and
obtain a smoothing $\widetilde{Y}$ of $Y$.

It remains to describe the geometry of the fibers of $\sigma \colon
\widetilde{Y} \to Y$. We start by looking at fibers over closed
points. For this we consider the normal forms of
Proposition~\ref{pNormalForms}:

\fbox{Case 1.}  The normal form of $Y$ 
around a smooth point of $D$ is
\[
	x^2 + sty^2 - z^2 = 0.
\]
In these local coordinates $D'= \{s=t=x=z=0\}$. The Hessian matrix of
second derivatives of this normal form is
\[
\begin{pmatrix}
0&
      y^{2}&
      0&
      0&
      2 t y&
      0\\
      y^{2}&
      0&
      0&
      0&
      2 s y&
      0\\
      0&
      0&
      0&
      0&
      0&
      0\\
      0&
      0&
      0&
      2&
      0&
      0\\
      2 t y&
      2 s y&
      0&
      0&
      2 s t&
      0\\
      0&
      0&
      0&
      0&
      0&
      {-2}\\
      \end{pmatrix}.
\]
At $(0:0:0:0:1:0) \in D'$ this matrix has rank $4$. Therefore the
fiber of $\sigma$ over this point is a $\P^1 \times \P^1$.

\fbox{Case 2.}  The normal form of $Y$ 
around a singular rank $2$ point of $D$ is
\[
	x^2 + s(s+tu)y^2 - z^2 = 0.
\]
The curve $D'$ consists of two lines that intersect in the point 
$$y = (0:0:0:0:1:0) \in D'.$$
We  blow up in two steps
\[
	\widetilde{Y} \xrightarrow{\sigma_2} Y' \xrightarrow{\sigma_1} Y
\]
with $\sigma_1$ blowing up one of the lines and $\sigma_2$ blowing up
the strict transform of the other line.

The Hessian matrix of the above normal form has rank $3$ in
$y$. Therefore the fiber of $\sigma_1$ over $y$ is a quadric cone
$C$. Now, the strict transform of the other line intersects this
quadric cone in one point. After a coordinate change, $Y'$ has the
same normal form as Case 1 above. Therefore the Hessian matrix at the
intersection point $y'$ of $C$ with the strict transform of the second
line has rank $4$. So the fiber of $\sigma_2$ over $y'$ is a $\P^1
\times \P^1$. The fiber of $\sigma = \sigma_2 \circ \sigma_1$ over $y$
consists then of the strict transform of the quadric cone $C$ under
$\sigma_2$ and a $\P^1 \times \P^1$.

\fbox{Case 3.}  The normal form of $Y$ 
around a singular rank $1$ point of $D$ is
\[
	x^2 +2syz+(ty+uz)^2 =0
\]
The curve $D'$ consists again of two lines, but this time these lines do not intersect.
Over the singular point of $D$ we have therefore $2$ points on $D'$, namely 
\[
 y = (0:0:0:0:0:1) \quad \text{and} \quad y' = (0:0:0:0:1:0).
\]
The Hessian matrix of the normal form above has rank $4$ in each point and therefore
the fiber of $\sigma$ is $\P^1 \times \P^1$ in both cases.

It remains now to consider the fibers over components of $D'$. By the
above calculations the fibers over smooth points of $D'$ are
isomorphic to $\P^1 \times \P^1$. The fibers over each curve component
of $D'$ are therefore birational to $\P^1 \times \P^1$-bundles. By
Tsen's theorem, these $\P^1 \times \P^1$ bundles are Zariski locally
trivial over the components $D_i'$, so we conclude using
Proposition~\ref{pResChowTriv}.
\end{proof}

\end{document}